\DeclareMathAlphabet\mathbfcal{OMS}{cmsy}{b}{n}
\newcommand{\mathbi}[1]{{\boldsymbol #1}}
\def\N{\mathbb{N}}
\def\R{\mathbb{R}}
\def\err{\mathsf{err}}
\def\dist{{\rm dist}}
\def\P{\mathcal{P}}
\def\<{\langle}
\def\>{\rangle}
\def\dsp{\displaystyle} 
\def\div{{\rm div}}
\def\norm#1#2{\Vert#1\Vert_{#2}}
\newcounter{cst}
\def\bt{\begin{theorem}}
\def\et{\end{theorem}}
\def\bl{\begin{lemma}}
\def\el{\end{lemma}}
\def\bc{\begin{corollary}}
\def\ec{\end{corollary}}
\def\bd{\begin{definition}}
\def\ed{\end{definition}}
\def\br{\begin{remark}}
\def\er{\end{remark}}
\newcommand{\disc}{{\mathcal D}}
\def \hessian{\mathcal H}
\def \hb{\hessian^{B}}
\def \hbd{\hessian_\disc^{B}}
\def \wdspace{H^B}
\def\cv{K}
\def \symd{\mathcal S_d}
\def \sym2{\mathcal S_2}
\def\cell{K}
\newcommand{\polyd}{{\mathcal T}}
\newcommand{\mesh}{{\mathcal M}}
\newcommand{\edge}{{\sigma}}
\newcommand{\edges}{{\mathcal F}}              
\newcommand{\edgescv}{{{\edges}_\cv}}  
\newcommand{\edgesext}{{{\edges}_{\rm ext}}} 
\newcommand{\edgesint}{{{\edges}_{\rm int}}}
\newcommand{\centers}{\mathcal{P}}
\newcommand{\x}{\mathbi{x}}
\newcommand{\centeredge}{\overline{\mathbi{x}}_\edge}
\newcommand{\bu}{{u}}
\def\eaaD{{\rm E}} 
\def\stab{\mathfrak{S}}
\newcommand{\be}{\begin{equation}}
\newcommand{\ee}{\end{equation}}
\renewcommand{\O}{\Omega}
\def\dr{\partial}
\renewcommand{\d}{{\:\rm d}}
\newcommand{\ba}{\begin{array}{llll}   }
\newcommand{\bac}{\begin{array}{c}}
\newcommand{\bari}{\begin{array}{r}}
\newcommand{\ea}{\end{array}}
\newcommand{\NORM}[1]{{\left\vert\kern-0.25ex\left\vert\kern-0.25ex\left\vert #1 
    \right\vert\kern-0.25ex\right\vert\kern-0.25ex\right\vert}}
\def\discs{{\disc^*}}
\newtheorem{theorem}{Theorem}[section]
\newtheorem{remark}[theorem]{Remark}
\newtheorem{lemma}[theorem]{Lemma} 
\newtheorem{definition}[theorem]{Definition}
\newtheorem{proposition}[theorem]{Proposition}
\newtheorem{corollary}[theorem]{Corollary}
\newtheorem{assumption}[theorem]{Assumption}
\numberwithin{equation}{section}
\def\WS{{\rm WS}}
\def\XXint#1#2#3{{\setbox0=\hbox{$#1{#2#3}{\int}$ }
\vcenter{\hbox{$#2#3$ }}\kern-.6\wd0}}
\definecolor{violet}{rgb}{0.580,0.,0.827}
\newcounter{cexp}
\def\terml#1{T_{\refstepcounter{cexp}\@bsphack
		\protected@write\@auxout{}%
		{\string\newlabel{#1}{{\thecexp}{\thepage}}}\thecexp}}
\begin{document}
\title[Improved $L^2$ and $H^1$ error estimates for the HDM]{Improved $L^2$ and $H^1$ error estimates for the hessian discretisation method}
\author{Devika Shylaja}
\address{IITB-Monash Research Academy, Indian Institute of Technology Bombay, Powai, Mumbai 400076, India.
	\texttt{devikas@math.iitb.ac.in}}
\maketitle

\begin{abstract}
The Hessian discretisation method (HDM) for fourth order linear elliptic equations provides a unified convergence analysis framework based on three properties namely coercivity, consistency, and limit-conformity. Some examples that fit in this approach include conforming and nonconforming finite element methods, finite volume methods and methods based on gradient recovery operators. A generic error estimate has been established in $L^2$, $H^1$ and $H^2$-like norms in literature. In this paper, we establish improved $L^2$ and $H^1$ error estimates in the framework of HDM and illustrate it on various schemes. Since an improved $L^2$ estimate is not expected in general for finite volume method (FVM), a modified FVM is designed by changing the quadrature of the source term and a superconvergence result is proved for this modified FVM. In addition to the Adini nonconforming finite element method (ncFEM), in this paper, we show that the Morley ncFEM is an example of HDM. Numerical results that justify the theoretical results are also presented.
\end{abstract}

\medskip

{\scriptsize
	\textbf{Keywords}: fourth order elliptic equations, numerical schemes, error estimates, Hessian discretisation method, Hessian schemes, finite element method, finite volume method, gradient recovery method. 
	
	\smallskip
	
	\textbf{AMS subject classifications}: 65N08, 65N12, 65N15, 65N30.
}

\section{Introduction}
There are many applications where fourth order elliptic partial differential equations appear, for example, thin plate theories of elasticity \cite{ciarlet_plate}, thin beams and the Stokes problem in stream function and vorticity formulation \cite{Lions_NS}. Consider the following fourth order model problem with homogeneous clamped boundary conditions.
\begin{subequations} \label{model_problem}
	\begin{align}
	& {\sum_{i,j,k,l=1}^{d}\partial_{kl}(a_{ijkl}\partial_{ij}\bu) = f  \quad \mbox{ in } \Omega, }\label{problem}\\
	&\qquad \qquad \quad \bu=\frac{\partial \bu}{\partial n}= 0\quad\mbox{ on $\partial\O$}, \label{bc1}
	\end{align}
\end{subequations} 
where $\O \subset \R^d (d\ge1)$ is a bounded domain  with boundary $\partial \Omega$,  $f \in L^2(\O)$ and $n$ is the unit outward normal to $\O$. Furthermore, the coefficients $a_{ijkl}$ are measurable bounded functions which satisfy the conditions $ a_{ijkl}=a_{jikl}=a_{ijlk} =a_{klij}$ for $i,j,k,l=1,\cdots,d .$ 
\medskip

The Hessian discretisation method (HDM) for fourth order linear elliptic equations is a unified convergence analysis framework based on the choice of a set of discrete space and operators called altogether a Hessian discretisation (HD). The idea of the HDM is to construct a scheme by replacing the continuous space, function, gradient, and Hessian in the weak formulation with the discrete elements provided by a HD. The numerical scheme thus obtained is called a Hessian scheme. The concept of HDM is motivated by the Gradient discretisation method (GDM) \cite{gdm} for second order problems. The framework of HDM enables us to develop one study that encompasses several numerical methods such as conforming and nonconforming finite element methods, finite volume methods and methods based on gradient recovery operators. It has been shown in \cite{HDM_linear} that only three properties, namely coercivity, consistency, and limit-conformity, are sufficient to prove the convergence of a HDM.
\medskip

The finite element method (FEM) is one of the most well-known tools for solving fourth-order elliptic problems. Conforming finite element (for e.g., the Argyris triangle, the Bogner--Fox--Schmit rectangle) methods for \eqref{model_problem} requires the approximation space to be a subspace of $H^2_0(\O)$, which results in $C^1$ finite elements that is cumbersome for implementations \cite{dou_percell_scott,ciarlet1978finite,percell_ctfem}. The nonconforming Morley elements which are based on piecewise quadratic polynomials are simpler to use and have fewer degrees of freedom (6 degrees of freedom in a triangle). The Adini element is a well-known nonconforming finite element on rectangular meshes with 12 degrees of freedom in a rectangle. For an analysis of finite element approximation by a mixed method, see \cite{FR78,BR77}. 
\medskip

In \cite{BL_FEM}, a finite element approximation based on gradient recovery (GR) operator for a biharmonic problem using biorthogonal system has been studied, where the approximation properties of the GR operator ensure the optimality of the finite element approach. The GR operator maps an $L^2$ function to a piecewise linear globally continuous $H^1$ function and this enables to define a Hessian matrix starting from $\mathbb{P}_1$ functions, see \cite{BL_MFEM,BL_stab.mixedfem,BL_FEM} for more details. A cell centered finite volume method (FVM) for the approximation of a biharmonic problem has been proposed and analyzed in \cite{biharmonicFV}, first on grids which satisfy an orthogonality condition, and then on general meshes. This scheme consists of approximation by piecewise constant functions and hence it is easy to implement and computationally cheap. 
\medskip

A generic error estimate has been established for the HDM applied to \eqref{model_problem} in \cite{HDM_linear}. This estimate only gives linear order of convergence in $L^2$, $H^1$ and $H^2$ norms for low-order conforming FEMs, Adini nonconforming FEM and methods based on GR operators, provided $\bu \in H^4(\O)\cap H^2_0(\O)$. Also, the error estimate provides an $\mathcal{O}(h^{1/4}|\ln(h)|)$ (in $d=2$) or $\mathcal O(h^{3/13})$ (in $d=3$) convergence rate for the FVM in the HDM framework, where $h$ denotes the mesh parameter. However, an $\mathcal{O}(h^2)$ superconvergence rate in $L^2$ norm has been numerically observed in \cite{HDM_linear} on two dimensional triangular and square meshes. Note that the FVM only works for the biharmonic problem with the approximation of the Laplacian of the functions while the other methods work for more generic fourth-order problems in the HDM setting. 
\medskip

The goal of this paper is to obtain an improved error estimate in $L^2$ and $H^{1}$-like norms compared to the estimate in the energy norm for the HDM applied to \eqref{model_problem}. The Aubin--Nitsche duality arguments apply to establish $L^2$ and $H^{1}$ estimates in the abstract framework which involve an interpolant of the solution to \eqref{model_problem} in the weak sense. However, for the $H^1$ error estimate, this is not straightforward. Under the assumption that there exists a companion operator that lifts the discrete space to the continuous space with certain property, an improved $H^1$ error estimate is proved in the abstract setting. These estimates are then illustrated for some schemes contained in the HDM framework. Since such an improved $L^2$ estimate is not true in general for FVM even in the case of second order problems (\cite{jd_nn} and references therein), a modified FVM is designed in which only the right hand side in the Hessian scheme is modified and a superconvergence result is proved for this modified method. In addition, it is also established that the Morley nonconforming finite element method (ncFEM) is an example of HDM. Numerical experiments are performed to validate the theoretical estimates for the GR method and modified FVM.
\medskip

The rest of this article is organised as follows. Section \ref{sec.wf} deals with the weak formulation of the model problem \eqref{model_problem}. Section \ref{sec.hdm} briefly describes the Hessian discretisation method and states the basic error estimates. Some examples of HDM are presented in Subsection \ref{sec.HDMeg}. The improved $L^2$ and $H^1$ error estimates for the HDM are stated in Section \ref{sec.results} and a modified FVM is designed. These estimates are then applied to several schemes. This section also states the convergence of Morley ncFEM in the HDM framework. Numerical results for the gradient recovery method and the modified FVM are presented in Section \ref{sec.eg}. Section \ref{sec.proof} deals with the proof of the main results. Section \ref{appendix} is an Appendix, that gathers various results: some technical results and the proof of the application of improved error estimates to various schemes stated in Section \ref{sec.results}.
\medskip

\textbf{Notations}. Let  $d$ be the dimension and $\symd(\R)$ be the space of symmetric matrices. A fourth order symmetric tensor $P$ is interpreted as a linear map from $\symd(\R)$ to $\symd(\R)$ and let $p_{ijkl}$ denote the indices of the fourth order tensor $P$ in the canonical basis of $\symd(\R)$. For simplicity, we follow the Einstein summation convention unless otherwise stated. The scalar product on $\symd(\R)$ is defined by $\xi:\phi=\xi_{ij}\phi_{ij}$. For a function $\xi: \O \rightarrow \symd(\R)$, denoting the Hessian matrix by $\hessian$ we set $\hessian : \xi = \partial _{ij}\xi_{ij}$. The transpose $P^{\tau}$ of $P$ is given by $P^{\tau} = (p_{klij})$, if $P = (p_{ijkl})$. Note that ($P\phi)_{ij}=p_{ijkl}\phi_{kl}$ and $P^{\tau}\xi : \phi = \xi : P \phi$. The tensor product $a\otimes b$ of two vectors $a,b\in\R^d$ is the 2-tensor with coefficients $a_ib_j$. The Lebesgue measure of a measurable set $E\subset \R^d$ is denoted by $|E|$. The norm in $L^2(\O)$, $L^2(\O)^d$ for vector-valued functions, and $L^2(\O;\R^{d\times d})$ for matrix-valued functions, is denoted by $\|{\cdot}\|$. We denote by $(\cdot,\cdot)$ the $L^2$ inner product or duality pairing between $H^{-1}(\O)$ and $H^1_0(\O),$ this could be understood from the context. 
\subsection{Weak formulation}\label{sec.wf}
The weak formulation corresponding to \eqref{model_problem} reads:
\be
\mbox{Find  $\bu \in V:=H^2_0(\O)$ such that $\forall v \in V$,}\quad
\int_\O A\hessian \bu:\hessian v \d\x=\int_\O fv\d\x, \label{weak.general}
\ee
where $A$ is the fourth order tensor with indices $a_{ijkl}$ and $ \x=(x_1,x_2,...,x_d) \in \O$. Assume the existence of a fourth order tensor $B$ such that for all $\xi, \phi \in \symd(\R)$, $A\xi:\phi =B\xi:B\phi$. Since $B^{\tau}\xi : \phi = \xi : B \phi$, we obtain $A = B^{\tau}B$.
\smallskip

The weak formulation \eqref{weak.general} corresponding to \eqref{model_problem} can be rewritten as
\be\label{weak}
\mbox{Find  $\bu \in V$ such that $\forall v \in V$,}
\quad a(\bu,v)=\int_\O fv\d\x,
\ee
where 
\be\label{weak.a}
a(\bu,v)= \int_\O \hb \bu:\hb v \d\x \mbox{  and  } \hb v=B\hessian v.
\ee 
We assume in the following that $B$ is constant over $\O$, and that the following coercivity property holds:
\be \label{coer:B}
\exists \varrho>0\mbox{ such that } \norm{\hb v}{}\ge \varrho\norm{v}{H^2(\Omega)}\;\forall v\in H^2_0(\Omega).
\ee
Hence, the weak formulation \eqref{weak} has a unique solution by the Lax--Milgram lemma. Note that we do not necessarily discretise the full Hessian matrix and this is the purpose of the introduction of the tensors $A$ and $B$. Even for the biharmonic problem, which could be dealt with using just $B$ the identity tensor ($B\xi=\xi$), there is an interest in introducing other possible tensors that lead to the same model. Precisely because the weak formulation with $B\xi=\xi$ requires to use and discretise the entire Hessian matrix, whereas other choices of $B$, such as $B\xi=\frac{{\rm tr}(\xi)}{\sqrt{d}}{\rm Id}$ (where ${\rm tr}(\xi)$ is the trace of $\xi$ and ${\rm Id}$ is the identity matrix), lead to a weak formulation that only involves the Laplacian, and thus whose numerical approximation only requires to approximate this particular operator (not each and every second order derivative and with the full Hessian). In this paper, the FVM is built on an approximation of the Laplacian of the functions whereas the FEMs work with a generic $A$ that is independent of the model. An overview of the choice of $B$ for biharmonic and plate problems can be found in \cite{HDM_linear}.


  \section{The Hessian discretisation method}\label{sec.hdm}
The HDM \cite{HDM_linear} for fourth order linear elliptic partial differential equations is briefly presented in this section. The HDM consists in writing a scheme, known as a Hessian scheme (HS), by replacing the space and the continuous operators in the weak formulation \eqref{weak} with discrete components. These discrete components are provided by a Hessian discretisation (HD). 
 \begin{definition}[$B\textendash$Hessian discretisation]\label{HD}
 	A $B\textendash$Hessian discretisation for homogeneous clamped boundary conditions is a quadruplet $\disc=(X_{\disc,0},\Pi_\disc,\nabla_\disc,\hbd)$ such that
 	\begin{itemize}
 		\item $X_{\disc,0}$ is a finite-dimensional space encoding the unknowns of the method,
 		\item $\Pi_\disc:X_{\disc,0}  \rightarrow L^2(\O)$ is a linear mapping that reconstructs a function from the unknowns,
 		\item $\nabla_\disc:X_{\disc,0}  \rightarrow L^2(\O)^d$ is a linear mapping that reconstructs a gradient from the unknowns,
 		\item $\hbd:X_{\disc,0}  \rightarrow L^2(\O;\R^{d\times d})$ is a linear mapping that reconstructs a discrete version of $\hb(=B\hessian)$ from the unknowns. It must be chosen such that $\norm{\cdot}{\disc}:=\norm{\hbd \cdot}{}$ is a norm on  $X_{\disc,0}.$
 	\end{itemize}
 \end{definition}
 
Let $\disc=(X_{\disc,0},\Pi_\disc,\nabla_\disc,\hbd)$ be a $B\textendash$Hessian discretisation. Then the related HS for \eqref{weak} is given by
 \be\label{base.HS}
 \begin{aligned}
 	&\mbox{Find $u_\disc\in X_{\disc,0}$ such that for any $v_\disc\in X_{\disc,0}$,}\\
 	&a_\disc(u_\disc,v_\disc)=\int_\O f\Pi_\disc v_\disc\d\x,
 \end{aligned}
 \ee
 where $a_\disc(u_\disc,v_\disc)=\int_\O \hbd u_\disc:\hbd v_\disc\d\x.$

 \subsection{Basic error estimates}
 Given a Hessian discretisation $\disc$, the accuracy of a Hessian scheme is measured by three quantities.
 
 The first one is a constant, a measure of coercivity, which controls the norm of $\Pi_\disc$ and $\nabla_\disc$.
 \be\label{def.CD}
 C_\disc^B = \max_{w_\disc\in X_{\disc,0}\backslash\{0\}} \left(\frac{\norm{\Pi_\disc w_\disc}{}}{{\norm{\hbd w_\disc}{}}},
 \frac{\norm{\nabla_\disc w_\disc}{}}{{\norm{\hbd w_\disc}{}}}\right).
 \ee

 The second measure involves an estimate of the interpolation error in the finite element framework, called the consistency in the framework of the HDM. 
 \be\label{def.SD}
 \begin{aligned}
 	\forall  \varphi\in H^2_0(\O)\,,\;
 	S_\disc^B(\varphi)=\min_{w_\disc\in X_{\disc,0}}\Big(&\norm{\Pi_\disc w_\disc-\varphi}{}
 	+\norm{\nabla_\disc w_\disc-\nabla\varphi}{}\\	
 	&+\norm{\hbd w_\disc-\hb \varphi}{}\Big).
 \end{aligned}
 \ee

 Finally, the third quantity measures the error in the discrete integration by parts known as the limit--conformity and is defined by
 \be\label{def.WD}
 \begin{aligned}
 	&\forall \, \xi \in \wdspace(\O)\,,\,W_\disc^B(\xi)=\max_{w_\disc\in X_{\disc,0}\backslash\{0\}}
 	\frac{\Big|\mathcal{W}_\disc^B(\xi, w_\disc)  \Big|}{\norm{\hbd w_\disc}{}},
 \end{aligned}
 \ee
 where $\wdspace(\O)=\{\xi\in L^2(\O)^{d \times d}\,;\,\hessian:B^{\tau}B\xi \in L^2(\O) \}$ and
 \be\label{def:W.nr}
 \mathcal{W}_\disc^B(\xi, w_\disc) = \int_\O \Big((\hessian:B^{\tau}B\xi)\Pi_\disc w_\disc - B\xi:\hbd w_\disc \Big)\d\x. 
 \ee

The notation $X\lesssim Y$ means that $X\le CY$ for some $C$ depending only on $\O$ and an upper bound of $C_\disc^B$.

 \begin{theorem}[Error estimate for Hessian schemes]\cite[Theorem 3.6]{HDM_linear}\label{th:error.est.PDE} Let $\disc$ be a $B\textendash$Hessian discretisation in the
 	sense of Definition \ref{HD}, $\bu$ be the solution
 	to \eqref{weak} and $u_\disc$  be the solution to \eqref{base.HS}. Then
 	\begin{align}\label{err.est.PDE}
 	\norm{\Pi_\disc u_\disc-\bu}{}+\norm{\nabla_\disc u_\disc-\nabla\bu}{}+	\norm{\hbd u_\disc-\hb \bu}{}
 	\lesssim \WS_\disc^B(\bu),
 	\end{align}
 	where 
 	\be \label{def.ws}
 	\WS_\disc^B(\bu)= W_\disc^B(\hessian \bu)+S_\disc^B(\bu).
 	\ee
 
 \end{theorem}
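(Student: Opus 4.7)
The plan is to follow a Strang--C\'ea-type argument: introduce an interpolant of $\bu$ in $X_{\disc,0}$, control the discrete energy of the difference between $u_\disc$ and that interpolant via the scheme and the PDE, and then use triangle inequality together with the coercivity constant $C_\disc^B$ to recover the $L^2$ and $H^1$-like parts.

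First, I would pick $P_\disc \bu \in X_{\disc,0}$ realising (or almost realising) the minimum in \eqref{def.SD} with $\varphi = \bu$, so that
\begin{equation*}
\norm{\Pi_\disc P_\disc\bu-\bu}{}+\norm{\nabla_\disc P_\disc\bu-\nabla\bu}{}+\norm{\hbd P_\disc\bu-\hb\bu}{} = S_\disc^B(\bu).
\end{equation*}
Set $w_\disc := u_\disc - P_\disc\bu \in X_{\disc,0}$. Using the Hessian scheme \eqref{base.HS} with test function $w_\disc$, the PDE $\hessian : B^\tau B \hessian \bu = f$ (which holds in $L^2(\O)$ because $f\in L^2(\O)$ and thus $\hessian \bu \in \wdspace(\O)$), and the definition of $\mathcal W_\disc^B$ in \eqref{def:W.nr} with $\xi = \hessian \bu$, I compute
\begin{equation*}
\int_\O f\Pi_\disc w_\disc \d\x = \int_\O (\hessian:B^\tau B\hessian\bu)\Pi_\disc w_\disc \d\x = \mathcal W_\disc^B(\hessian\bu,w_\disc) + \int_\O \hb\bu : \hbd w_\disc \d\x.
\end{equation*}
Substituting this into the scheme and rearranging yields the key identity
\begin{equation*}
\norm{\hbd w_\disc}{}^2 = \mathcal W_\disc^B(\hessian\bu,w_\disc) + \int_\O \bigl(\hb\bu - \hbd P_\disc\bu\bigr):\hbd w_\disc \d\x.
\end{equation*}

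Next, I bound each term on the right: the first by $W_\disc^B(\hessian\bu)\,\norm{\hbd w_\disc}{}$ using the definition \eqref{def.WD}, and the second by $\norm{\hb\bu-\hbd P_\disc\bu}{}\norm{\hbd w_\disc}{} \le S_\disc^B(\bu)\,\norm{\hbd w_\disc}{}$ via Cauchy--Schwarz. Dividing through by $\norm{\hbd w_\disc}{}$ gives
\begin{equation*}
\norm{\hbd w_\disc}{} \le W_\disc^B(\hessian\bu) + S_\disc^B(\bu) = \WS_\disc^B(\bu).
\end{equation*}
The triangle inequality then bounds $\norm{\hbd u_\disc - \hb\bu}{}$ by $\norm{\hbd w_\disc}{} + S_\disc^B(\bu) \lesssim \WS_\disc^B(\bu)$, as desired.

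Finally, for the $L^2$ and gradient reconstruction terms I apply the triangle inequality once more, splitting through $\Pi_\disc P_\disc\bu$ (respectively $\nabla_\disc P_\disc\bu$). The first piece is controlled by $S_\disc^B(\bu)$ by construction, and the second piece $\norm{\Pi_\disc w_\disc}{}$ (resp.\ $\norm{\nabla_\disc w_\disc}{}$) is controlled by $C_\disc^B \norm{\hbd w_\disc}{} \lesssim \WS_\disc^B(\bu)$ via the coercivity estimate \eqref{def.CD}. Summing the three contributions concludes \eqref{err.est.PDE}. There is no real obstacle in this argument; the only subtlety is recognising that the PDE, interpreted distributionally, provides exactly the regularity $\hessian\bu \in \wdspace(\O)$ needed to apply $W_\disc^B$, and that the limit-conformity measure was tailored precisely to capture the consistency defect left after testing the discrete scheme against $w_\disc$.
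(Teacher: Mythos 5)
Your argument is correct. Note that the paper does not prove this theorem itself — it is quoted from [HDM\_linear, Theorem 3.6] — but your proof is precisely the standard one for that result: take $P_\disc\bu$ realising the minimum in the consistency measure (attainable since $X_{\disc,0}$ is finite-dimensional and $\norm{\hbd\cdot}{}$ is a norm), test the scheme with $w_\disc=u_\disc-P_\disc\bu$, use that the weak form gives $\hessian:B^\tau B\hessian\bu=f$ in $L^2(\O)$ so that $\hessian\bu\in\wdspace(\O)$ and $\mathcal W_\disc^B(\hessian\bu,\cdot)$ captures exactly the conformity defect, and conclude by Cauchy--Schwarz, the triangle inequality and the coercivity constant $C_\disc^B$ (which the $\lesssim$ notation permits in the hidden constant).
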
 
\begin{remark}[Convergence of the HS]
	Along a sequence $(\disc_m)_{m \in \N}$ of $B\textendash$Hessian discretisations, it is expected that $C^B_{\disc_m}$ remains bounded, $S^B_{\disc_m}(\varphi) \rightarrow 0$ for all $\varphi \in H^2_0(\O)$ and $W^B_{\disc_m}(\xi) \rightarrow 0$ for all $\xi \in \wdspace(\O)$ as $m \rightarrow \infty$ (see for example Theorem \ref{th.Morley}). Then Theorem \ref{th:error.est.PDE} gives the convergence of the HS along sequences of such HDs. 
\end{remark}
 \subsection{Examples of HD}\label{sec.HDMeg}
A few examples of $B\textendash$HD are presented in this section. We refer to \cite{HDM_linear} for a detailed analysis of these methods. In addition, it is established that the Morley ncFEM is an example of HDM. Let us first set some notations related to meshes.
 \begin{definition}[Polytopal mesh {\cite[Definition 7.2]{gdm}}]\label{def:polymesh}~
 	Let $\Omega$ be a bounded polytopal open subset of $\R^d$ ($d\ge 1$). A polytopal mesh of $\O$ is $\polyd = (\mesh,\edges,\centers)$, where:
 	\begin{enumerate}
 		\item $\mesh$ is a finite family of non empty connected polytopal open disjoint subsets of $\O$ (the cells) such that $\overline{\O}= \dsp{\cup_{\cell \in \mesh} \overline{\cell}}$. For any $\cell\in\mesh$, $|\cell|>0$ is the measure of $\cell$, $h_\cell$ denotes the diameter of $\cell$, $\overline{\x}_\cell$ is the center of mass of $K$, and $n_K$ is the outer unit normal to $K$.
 		
 		\item $\edges$ is a finite family of disjoint subsets of $\overline{\O}$ (the edges of the mesh in 2D, the faces in 3D), such that any $\edge\in\edges$ is a non empty open subset of a hyperplane of $\R^d$ and $\edge\subset \overline{\O}$. Assume that for all $\cell \in \mesh$ there exists  a subset $\edgescv$ of $\edges$ such that the boundary of $\cell$ is ${\bigcup_{\edge \in \edgescv}} \overline{\edge}$. We then set $\mesh_\edge = \{\cell\in\mesh\,;\,\edge\in\edgescv\}$ and assume that, for all $\edge\in\edges$, $\mesh_\edge$ has exactly one element and $\edge\subset\partial\O$, or $\mesh_\edge$ has two elements and $\edge\subset\O$. Let $\edgesint$ be the set of all interior faces, i.e. $\edge\in\edges$ such that $\edge\subset \O$, and $\edgesext$ the set of boundary faces, i.e. $\edge\in\edges$ such that $\edge\subset \dr\O$. The $(d-1)$-dimensional measure of $\edge\in\edges$ is $|\edge|$, and its centre of mass is $\centeredge$.
 		
 		\item $\centers = (\x_\cell)_{\cell \in \mesh}$ is a family of points of $\O$ indexed by $\mesh$ and such that, for all  $\cell\in\mesh$,  $\x_\cell\in \cell$.
 		Assume that any cell $\cell\in\mesh$ is strictly $\x_\cell$-star-shaped, meaning that 
 		if $\x\in\overline{\cell}$ then the line segment $[\x_\cell,\x)$ is included in $\cell$. 
 		
 	\end{enumerate}
 	The diameter of such a polytopal mesh is $h=\max_{K\in\mesh}h_K$. The set of internal vertices of $\mesh$ (resp. vertices on the boundary) is denoted by $\mathcal{V}_{\rm int}$ (resp. $\mathcal{V}_{\rm ext}$). 
 \end{definition}
 
We assume that $\mesh=\mesh_h$ satisfies minimal regularity assumptions. That is, if $\rho_\cell=\max\{r>0\,:\,B(\overline{\x}_\cell,r)\subset \cell\}$, then there exists $\eta>0$, independent of $h$, such that
 $\forall \cell\in\mesh\,,\; \frac{h_K}{\rho_\cell} \le \eta.$

\subsubsection{Conforming finite elements}\label{sec.conforming} The $B\textendash$HD $\disc=(X_{\disc,0},\Pi_\disc,\nabla_\disc,\hbd)$ for conforming FEM is defined by: $X_{\disc,0}$ is a finite dimensional subspace of $H^2_0(\O)$ and, for $v_\disc\in X_{\disc,0}$, $\Pi_\disc v_\disc=v_\disc$, $\nabla_\disc v_\disc=\nabla v_\disc$ and $\hbd v_\disc=\hb v_\disc$. Examples of conforming finite elements include the Argyris and Bogner--Fox--Schmit (BFS) finite elements, see \cite{ciarlet1978finite} for details.

\subsubsection{Non-conforming finite elements}\label{sec.ncFEM.eg}\textbf{}\\
$\bullet$ \textsc{the Adini rectangle \cite{ciarlet1978finite}: }Assume that $\O\subset\R^2$ can be covered by a mesh $\mesh$ made up of rectangles. Figure \ref{ncfem.fig} (left) represents an Adini rectangle $\cell \in \mesh$ with vertices $a_1,\,a_2,\,a_3$ and $a_4$ respectively. Each $v_\disc\in X_{\disc,0}$  is a vector of three values at each vertex of the mesh (with zero values at boundary vertices), corresponding to function and gradient values, $\Pi_\disc v_\disc$ is the function such that the values of $(\Pi_\disc v_\disc)_{|K}\in \mathbb{P}_3 \oplus \{x_1x^3_2\} \oplus\{x^3_1x_2\}$ and its gradients at the vertices are dictated by $v_\disc$, $\nabla_\disc v_\disc=\nabla(\Pi_\disc v_\disc)$ and $\hbd v_\disc= \hb_{\mesh}(\Pi_\disc v_\disc)$ is the broken $\hb$ of $\Pi_\disc v_\disc$. 
\begin{figure}
	\begin{center}
		\begin{minipage}[b]{0.4\linewidth}
			{\includegraphics[width=5.cm]{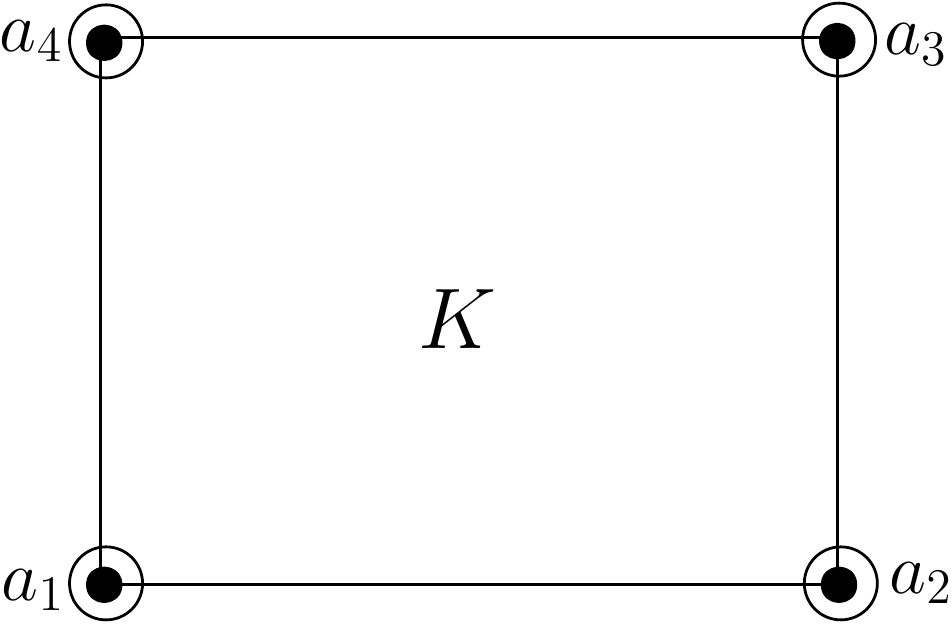}}
		\end{minipage}
		\qquad \quad
		\begin{minipage}[b]{0.35\linewidth}
			{\includegraphics[width=4.3cm]{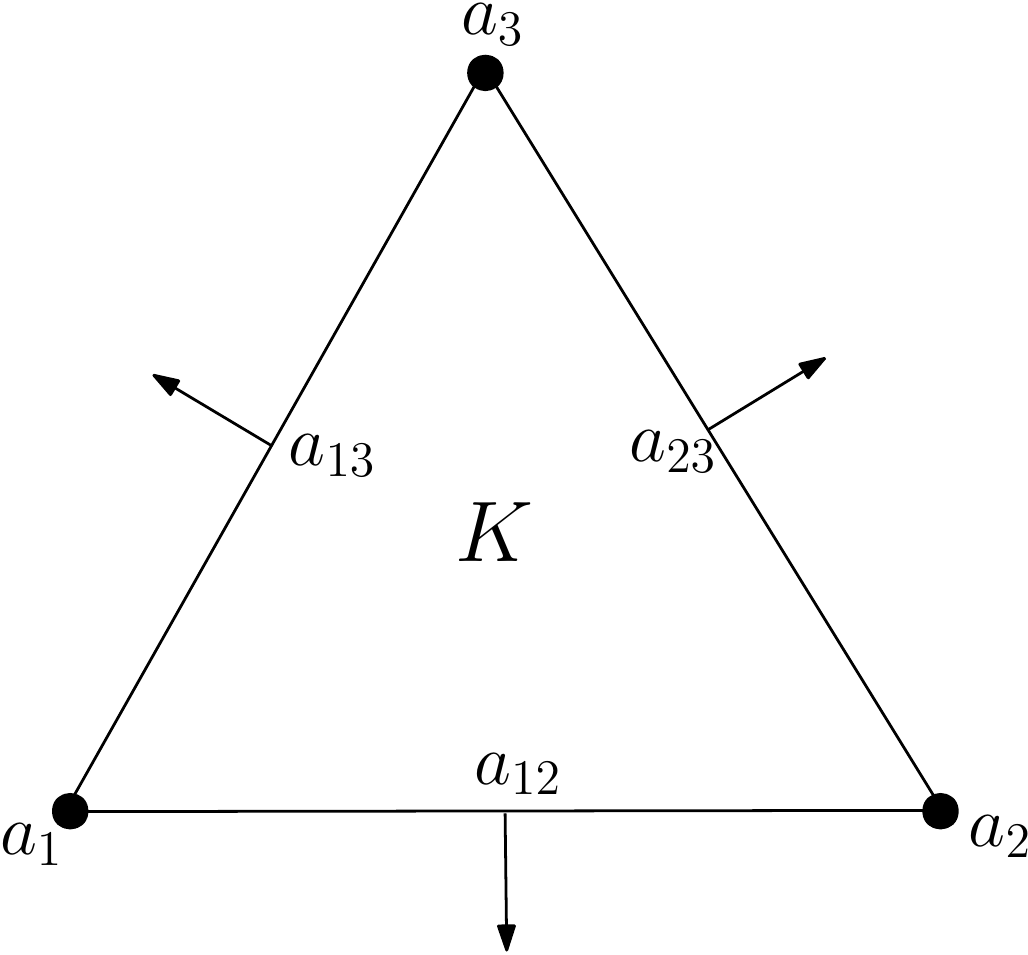}}
		\end{minipage}
		\caption{Adini element (left) and Morley element (right) }\label{ncfem.fig}
	\end{center}
\end{figure}
 \smallskip
 
 $\bullet$ \textsc{the Morley element \cite{ciarlet1978finite}: }We recast here the classical nonconforming FEM, the Morley ncFEM, in the Hessian discretisation method with $d=2$. Let $\mesh$ be a regular conforming triangulation of $\overline{\O}$ into closed triangles (see Figure \ref{ncfem.fig}, right). The Morley finite element is a triplet $(\cell,\mathbb{P}_\cell, \Sigma_\cell)$ where $\cell$ is a triangle, $\mathbb{P}_\cell$ = $\mathbb{P}_2(\cell)$, space of all polynomials of degree $\le$ 2 in two variables defined on $\cell$ (dim $\mathbb{P}_\cell=6$) and $\Sigma_\cell$ denote the degrees of freedom  consist of the values at the vertices of the mesh and normal derivatives at the midpoints of the edges opposite to these vertices.
 
Let $\mathbb{P}_2(\mesh)$ denote the space of all piecewise polynomials of degree atmost equal to 2 defined on $\mesh$. Then the nonconforming Morley element space associated with $\mesh$ is defined by
 \begin{align*}
 V_h & =: \bigg\{\phi \in \mathbb{P}_2(\mesh) |\phi \mbox{ is continuous at } \mathcal{V}_{\rm int}\mbox{ and vanishes at }\mathcal{V}_{\rm ext},\, \\ &\qquad\forall \edge \in \edgesint, \, \int_{\edge}^{}\bigg\llbracket\frac{\partial \phi}{\partial n}\bigg\rrbracket \d s=0;\,\forall \edge \in \edgesext, \, \int_{\edge}^{}\frac{\partial \phi}{\partial n}\d s=0\bigg\},
 \end{align*}
 where $\llbracket \phi \rrbracket$ denote the jump of the function $\phi$ along the edges.

 \begin{definition}[Hessian discretisation for the Morley element]\label{def.Morley}
 	Each $v_\disc\in X_{\disc,0}$  is a vector of degrees of freedom at the vertices of the mesh (with zero values at boundary vertices) and at the midpoint of the edges opposite to these vertices (with zero values at midpoint of the boundary edges). $\Pi_\disc v_\disc$ is the function such that $(\Pi_\disc v_\disc)_{|K}\in \mathbb{P}_\cell$ (resp. its normal derivatives) takes the values at the vertices (resp. at the edge midpoints) dictated by $v_\disc$, $\nabla_\disc v_\disc= \nabla_{\mesh}(\Pi_\disc v_\disc)$ is the broken gradient of $\Pi_\disc v_\disc$ and $\hbd v_\disc= \hb_{\mesh}(\Pi_\disc v_\disc)$ is the broken $\hb$ of $\Pi_\disc v_\disc$. 
 \end{definition}
 
 \subsubsection{Method based on Gradient Recovery Operators}\label{sec.gr}
In this method, the finite element space $V_h$ consists of piecewise linear polynomials, which are continuous over $\O$ and have a zero value on $\partial \O.$ Let $u_h \in V_h$ and let $Q_h: L^2(\O) \rightarrow V_h$ be a gradient recovery projection operator (see, e.g., \cite[Section 4.2]{HDM_linear} for a GR operator based on biorthogonal systems). This gives $Q_h \nabla u_h \in \mathbb{P}_1$, which is differentiable and hence a sort of second derivative of $u_h$ is expressed in terms of $\nabla Q_h\nabla u_h$. In order to ensure the coercivity property of this reconstructed Hessian, we consider a stabilisation function $\stab_h\in L^\infty(\O)^d$ with specific design properties \cite{HDM_linear}. Then the $B$--Hessian discretisation based on a triplet $(V_h,Q_h,\stab_h)$ is defined by: $X_{\disc,0}=V_h$ and, for $u_\disc \in X_{\disc,0}$, $\Pi_\disc u_\disc=u_\disc,\,\nabla_\disc u_\disc = Q_h\nabla u_\disc$ and $\hbd u_\disc=B\left[\nabla (Q_h \nabla u_\disc)+\stab_h\otimes (Q_h\nabla u_\disc-\nabla u_\disc)\right].$

 \subsubsection{Finite volume method based on $\Delta$-adapted discretisations}\label{FVM} Consider the finite volume scheme from \cite{biharmonicFV} for the biharmonic problem on $\Delta$-adapted meshes (see Figure \ref{fig:diagram}). For all  $\edge \in \edgesint$ with $\mesh_{\edge}= \{\cell, L\}$, the straight line $(\x_\cell, \x_L)$ intersects and is orthogonal to $\edge$, and for all $\edge \in \edgesext$ with $\mesh_{\edge}= \{\cell\}$, the line orthogonal to $\edge$ going through $\x_\cell$ intersects $\edge$. Since $\hb=\Delta$ in this method, one possible choice of $B$ is therefore to set $B\xi=\frac{\rm{tr}(\xi)}{\sqrt{d}}{\rm Id}$ for $\xi \in \symd(\R)$ where ${\rm Id}$ is the identity matrix. This method requires only one unknown per cell. 
 
\begin{figure}[H]
\centering
\includegraphics[width=0.7\linewidth]{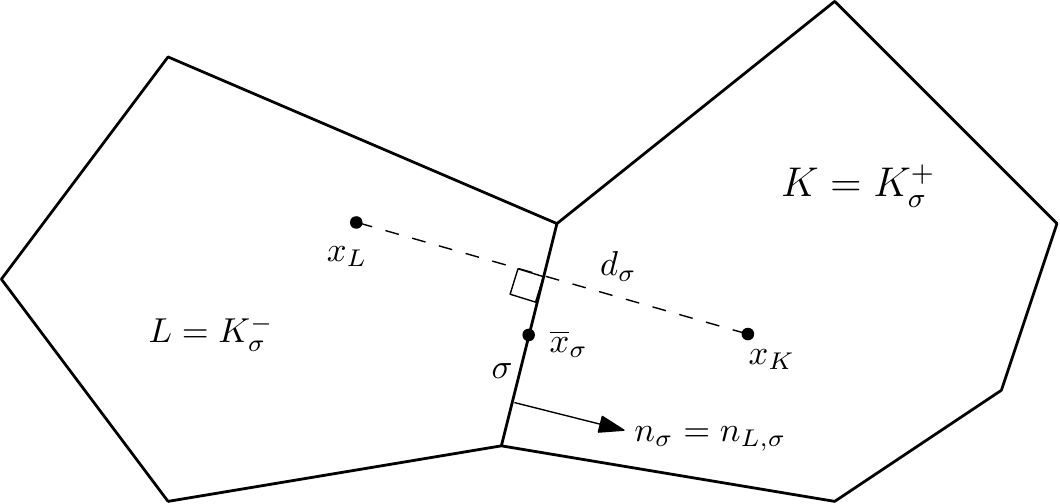}
\caption{Notations for $\Delta$-adapted discretisation}
\label{fig:diagram}
\end{figure}

 $X_{\disc,0}$ is the space of all real families $v_\disc=(v_\cell)_{\cell \in \mesh}$ such that $v_\cell=0$ if $\cell$ touches $\partial \O$. The operator $\Pi_\disc$ reconstructs a piecewise constant function given by: for any cell $K$, $\Pi_\disc v_\disc = v_K$ on $K$. For $\cell \in \mesh$ and $\edge \in \edgescv$, let $n_{\cell,\edge}$ be the unit vector normal to $\edge$ outward to $\cell$. For all $\edge \in \edges$, we choose an orientation (that is, a cell $K$ such that $\edge\in\edges_K$) and set $n_\edge=n_{\cell,\edge}$. For each $\edge \in \edgesint$, denote by $\cell^-_{\edge}$ and $\cell^+_{\edge}$ the two adjacent control volumes such that the unit normal vector $n_{\edge}$ is oriented from $\cell^-_{\edge}$ to $\cell^+_{\edge}$. For all $\edge \in \edgesext$, denote the control volume $\cell \in \mesh$ such that $\edge \in \edgescv$ by $\cell_{\edge}$ and define $n_{\edge}$ by $n_{\cell,\edge}$. Let
 \begin{equation*}
 d_\edge=\left\{
 \begin{array}{ll}
 \dist(\x_{\cell_\edge^-},\edge) + \dist(\x_{\cell_\edge^+},\edge) & \forall \edge \in \edgesint \\
 \dist(\x_{\cell_\edge},\edge)& \forall \edge \in \edgesext
 \end{array}\right.
 \end{equation*}
  where $\dist(\x_{\cell},\edge)$ denotes the orthogonal distance between $\x_\cell$ and $\edge$. The discrete gradient $\nabla_\disc$ and the Laplace operator $\Delta_\disc$  are defined by their constant values on the cells. 
 \begin{equation*}
 \nabla_\cell v_\disc=\frac{1}{|\cell|}\sum_{\edge \in \edges_\cell}^{}\frac{|\edge|(\delta_{\cell,\edge} v_\disc)(\centeredge-\x_\cell)}{d_\edge},  \quad  \Delta_\cell v_\disc=\frac{1}{|\cell|}\sum_{\edge \in \edges_\cell}^{}\frac{|\edge|\delta_{\cell,\edge} v_\disc}{d_\edge}, \label{biharmonic_nabla}
 \end{equation*}
and set $\hbd v_\disc = \frac{\Delta_\disc v_\disc}{\sqrt{d}}{\rm Id}$, where \begin{equation*}
 \delta_{\cell,\edge} v_\disc=\left\{\begin{array}{ll}
 v_L-v_\cell &\forall \,\edge \in  \edgescv \cap \edgesint\,,\; \mesh_{\edge}= \{\cell, L\} \\
 0 & \forall \, \edge \in \edgescv \cap \edgesext.
 \end{array}\right.
 \label{jump}
 \end{equation*}

 \begin{remark}[Rates of convergence \cite{HDM_linear}]\label{rates.PDE}Under regularity assumption $\bu \in H^4(\O)\cap H^2_0(\O)$, for low--order conforming FEMs, Adini ncFEM and gradient recovery methods based on meshes with mesh parameter ``$h$'', $\mathcal O(h)$ estimates can be obtained for $W_\disc^B(\hessian \bu)$ and $S_\disc^B(\bu)$. Theorem \ref{th:error.est.PDE} then gives a linear rate of convergence
 	for these methods. For FVM based on $\Delta$-adapted discretisations, Theorem \ref{th:error.est.PDE} provides an $\mathcal{O}(h^{1/4}|\ln(h)|)$ (in $d=2$) or $\mathcal O(h^{3/13})$ (in $d=3$) error estimate for the Hessian scheme based on the Hessian discretisation. In addition to these results from \cite{HDM_linear}, in this paper, we show that the HDM framework enables us to recover a linear rate of convergence for Morley ncFEM (see Theorem \ref{th.Morley}). 
 	\end{remark}

\section{Main results}\label{sec.results}
The improved $L^2$ and $H^1$ error estimates for HDM are stated in this section. Also, an estimate on the accuracy measures $C_\disc^B$, $S_\disc^B$ and $W_\disc^B$ associated with an HD $\disc$ using Morley ncFEM is stated at the end of this section. The proofs of the results are presented in Section \ref{sec.proof}. The improved error estimates are then applied to the methods listed in Section \ref{sec.hdm}, that is, FEMs, method based on GR operators and slightly modified FVM (see Definition \ref{def.modifiedFVM}). The modified FVM has the same matrix as the original FVM, since only the quadrature of the source term is modified, but enjoys a super-convergence result while the standard FVM fails to super-converge. 

\subsection{Improved $L^2$ error estimate}\label{sec.L2}
For establishing the lower order $L^2$ estimates, consider the adjoint problem corresponding to \eqref{weak}, and its Hessian scheme approximation.

\medskip
The weak formulation for the dual problem with source term $g \in L^2(\O)$ seeks $\varphi_g \in V$ such that
\begin{equation}\label{weak_adjoint}
a(w,\varphi_g)=(g,w)\; \mbox{for all $w\in V$.}
\end{equation}
The Hessian scheme corresponding to  \eqref{weak_adjoint} seeks  $\varphi_{g,\disc} \in X_{\disc,0}$ such that
\be\label{adj.GS}
\begin{aligned}
	&a_{\disc}(w_\disc, \varphi_{g,\disc})=(g, \Pi_\disc w_\disc)\;\mbox{for all $w_\disc\in X_{\disc,0}$.}
\end{aligned}
\ee

\begin{theorem}[Improved $L^2$ error estimate for Hessian schemes] 
	\label{th-l2-super}  \qquad 
	
	\noindent Let $\bu$ be the solution to \eqref{weak}.
	Let $\disc$ be a $B-$Hessian discretisation in the sense of Definition \ref{HD},
	and let $u_\disc$ be the solution to the Hessian scheme \eqref{base.HS}.
	Define
	\[
	g=\frac{\bu-\Pi_\disc u_\disc}{\norm{\bu-\Pi_\disc u_\disc}{}}\in L^2(\O)
	\]
	and let $\varphi_g$ be the solution to \eqref{weak_adjoint}. Choose $\P_\disc \bu,\P_\disc \varphi_g\in X_{\disc,0}$, where $\P_\disc$ is a mapping from $H^2_0(\O)$ to $X_{\disc,0}$.
	Then 
	\be\nonumber 
	\begin{aligned}
		\Vert&\Pi_\disc u_\disc -\bu\Vert_{}
		\lesssim{} \big(\norm{\hbd \P_\disc \bu-\hb \bu}{}+\WS_\disc^B(\bu)\big)
		\big(\norm{\hbd \P_\disc \varphi_g-\hb \varphi_g}{}+\WS_\disc^B(\varphi_g)\big)\\
		&+\norm{\Pi_\disc \P_\disc \bu-\bu}{}
		+\norm{f}{}\norm{\Pi_\disc \P_\disc \varphi_g-\varphi_g}{}+\left|\mathcal{W}_\disc^B(\hessian \bu,\P_\disc\varphi_g)\right|+
		\left|\mathcal{W}_\disc^B(\hessian\varphi_g,\P_\disc \bu)\right|,
	\end{aligned}
	\ee
	where $\WS_\disc^B$ is defined by \eqref{def.ws},
	and $\mathcal{W}_\disc^B$ is defined by \eqref{def:W.nr}.
\end{theorem}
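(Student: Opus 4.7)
The plan is to adapt the Aubin--Nitsche duality argument to the HDM framework. By construction of $g$, one has $\|\bu - \Pi_\disc u_\disc\| = (g, \bu - \Pi_\disc u_\disc)$. First I would insert the interpolant $\P_\disc \bu$,
\[
\|\bu - \Pi_\disc u_\disc\| = (g, \bu - \Pi_\disc \P_\disc \bu) + (g, \Pi_\disc(\P_\disc \bu - u_\disc)),
\]
bound the first summand by $\|\bu - \Pi_\disc \P_\disc \bu\|$ using $\|g\|=1$ (this is the origin of the standalone $\|\Pi_\disc \P_\disc \bu - \bu\|$ term in the statement), and rewrite the second summand as $a_\disc(\P_\disc \bu - u_\disc, \varphi_{g,\disc})$ via the adjoint Hessian scheme \eqref{adj.GS}.

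Next I would split this bilinear form with respect to $\P_\disc \varphi_g$ as $a_\disc(\P_\disc \bu - u_\disc, \varphi_{g,\disc} - \P_\disc \varphi_g) + a_\disc(\P_\disc \bu - u_\disc, \P_\disc \varphi_g)$. The first (genuinely quadratic) piece is handled by Cauchy--Schwarz in $a_\disc$: triangle inequalities through $\hb \bu$ and $\hb \varphi_g$, combined with Theorem \ref{th:error.est.PDE} applied to both the primal and the adjoint Hessian schemes, yield the two factors of the product $(\|\hbd \P_\disc \bu - \hb \bu\| + \WS_\disc^B(\bu))(\|\hbd \P_\disc \varphi_g - \hb \varphi_g\| + \WS_\disc^B(\varphi_g))$.

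The remaining ``diagonal'' piece is where the limit-conformity machinery enters. I would use the primal Hessian scheme \eqref{base.HS} with test function $\P_\disc \varphi_g$ to write $a_\disc(\P_\disc \bu - u_\disc, \P_\disc \varphi_g) = \int \hbd \P_\disc \bu : \hbd \P_\disc \varphi_g - (f, \Pi_\disc \P_\disc \varphi_g)$, then expand both arguments as $\hb \bu + (\hbd \P_\disc \bu - \hb \bu)$ and $\hb \varphi_g + (\hbd \P_\disc \varphi_g - \hb \varphi_g)$. The resulting purely-discrete product term is dominated by the product already obtained. For the mixed smooth/discrete integrals, the crucial identities---read off from \eqref{def:W.nr} together with the strong forms $\hessian : B^\tau B \hessian \bu = f$ and $\hessian : B^\tau B \hessian \varphi_g = g$ (the second using $A = B^\tau B$ and the symmetry $A^\tau = A$)---are
\[
\int \hb \bu : \hbd w_\disc = (f, \Pi_\disc w_\disc) - \mathcal{W}_\disc^B(\hessian \bu, w_\disc), \qquad \int \hb \varphi_g : \hbd w_\disc = (g, \Pi_\disc w_\disc) - \mathcal{W}_\disc^B(\hessian \varphi_g, w_\disc).
\]
Applying these with $w_\disc \in \{\P_\disc \varphi_g, \P_\disc \bu\}$, using $(f, \varphi_g)=(g, \bu)$, and invoking Cauchy--Schwarz on the resulting inner-product leftovers $(g, \Pi_\disc \P_\disc \bu - \bu)$ and $(f, \Pi_\disc \P_\disc \varphi_g - \varphi_g)$ accounts for the remaining terms $\|\Pi_\disc \P_\disc \bu - \bu\|$, $\|f\|\,\|\Pi_\disc \P_\disc \varphi_g - \varphi_g\|$, $|\mathcal{W}_\disc^B(\hessian \bu, \P_\disc \varphi_g)|$, and $|\mathcal{W}_\disc^B(\hessian \varphi_g, \P_\disc \bu)|$ in the stated bound.

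The hard part will be the bookkeeping in the last step: multiple insertions of $\pm \hb \bu$ and $\pm \hb \varphi_g$ must be tracked together with the $\mathcal{W}_\disc^B$ identities in both slots, and the signs must be handled carefully because $A$ and $B^{\tau}B$ act on both sides of the duality pairing. A tempting algebraic simplification would cancel the pair of $(f,\cdot)$ contributions produced respectively by $\int \hb \bu : \hb \varphi_g = (f, \varphi_g)$ and by the $\mathcal{W}_\disc^B$ identity, giving a sharper but less modular estimate; refusing that cancellation and estimating each piece by Cauchy--Schwarz yields the form stated in the theorem, which is the one directly applicable to the finite element, gradient recovery, and modified finite volume schemes of Section \ref{sec.HDMeg}.
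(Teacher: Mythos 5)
Your argument is correct and rests on the same three pillars as the paper's proof --- the Aubin--Nitsche duality identity, the basic error estimate of Theorem \ref{th:error.est.PDE} applied to both the primal and the adjoint Hessian schemes, and the identities $\int_\O \hb \bu:\hbd w_\disc\d\x=(f,\Pi_\disc w_\disc)-\mathcal{W}_\disc^B(\hessian\bu,w_\disc)$ together with its adjoint analogue --- but the decomposition is organised differently. The paper pivots around $a(\bu,\varphi_g)-a_\disc(\P_\disc\bu,\P_\disc\varphi_g)$ and invokes an auxiliary comparison lemma (Lemma \ref{conts.discrete.bilinear}, bounding $|a(\psi,\phi)-a_\disc(\psi_\disc,\phi_\disc)|$ by the quantity $\eaaD_\disc$) three times, then splits the remaining discrete difference as $a_\disc(\P_\disc\bu,\P_\disc\varphi_g-\varphi_{g,\disc})+a_\disc(\P_\disc\bu-u_\disc,\varphi_{g,\disc})$; in particular, the product term arises there from the primal consistency residual $a_\disc(\P_\disc\bu,\cdot)-(f,\Pi_\disc\,\cdot)$ tested against $\P_\disc\varphi_g-\varphi_{g,\disc}$ and estimated via $W_\disc^B(\hessian\bu)$. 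You instead peel off $(g,\bu-\Pi_\disc\P_\disc\bu)$ at the outset, reduce everything to $a_\disc(\P_\disc\bu-u_\disc,\varphi_{g,\disc})$, and obtain the product term by a plain Cauchy--Schwarz on $a_\disc(\P_\disc\bu-u_\disc,\varphi_{g,\disc}-\P_\disc\varphi_g)$; this bypasses Lemma \ref{conts.discrete.bilinear} entirely, is symmetric in the two slots, and --- as you note --- the exact bookkeeping in the diagonal piece makes the $(f,\cdot)$ contributions cancel, so you even obtain a marginally sharper bound (without the $\norm{f}{}\norm{\Pi_\disc\P_\disc\varphi_g-\varphi_g}{}$ term) that still implies the stated one. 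The only point to make explicit when writing this up is that applying Theorem \ref{th:error.est.PDE} to the adjoint scheme uses the symmetry of $a$, so that \eqref{adj.GS} is itself a Hessian scheme with datum $g$ and $\WS_\disc^B(\varphi_g)$ is the relevant consistency quantity; this is exactly how the paper uses it as well.
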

\begin{remark}[Dominating terms]\label{remark.improvedl2.dominate}
Following Remark \ref{rates.PDE}, for FEMs and methods based on GR operators, it is expected that $\WS_D^B(\bu)=\mathcal{O}(h)$ if $\bu \in H^4(\O)\cap H^2_0(\O)$. Hence, for a given HS, Theorem \ref{th-l2-super} provides an improved result if we can find a mapping $\P_\disc$ (usually an interpolant) such that $\norm{\hbd \P_\disc \phi-\hb \phi}{}=\mathcal{O}(h),\, \norm{\Pi_\disc \P_\disc \phi-\phi}{}=\mathcal{O}(h^2),\,\mathcal{W}_\disc^B( \xi,\P_\disc\phi)=\mathcal{O}(h^2)$ for all $\phi \in H^4(\O)\cap H^2_0(\O)$ and all $\xi \in H^2(\O)^{d \times d}.$
\end{remark}
The proof of Theorem \ref{th-l2-super} is presented in Section \ref{sec.proofL2}. We now turn to the application of the above theorem to various schemes described in Section \ref{sec.HDMeg}. The proof of Propositions \ref{ncfem:supercv} and \ref{prop.FVM} are given in Section \ref{appendix}, Appendix. Proposition \ref{ncfem:supercv} justifies the rates numerically observed for the method based on GR operator in \cite{HDM_linear}.

\begin{proposition}\label{ncfem:supercv}Let $\bu\in H^4(\O)\cap H^2_0(\O)$ be the solution to \eqref{weak} and $u_\disc$ be the solution to the Hessian scheme \eqref{base.HS}. Then, for low-order conforming FEMs, Adini and Morley ncFEMs, and gradient recovery methods, there exists a constant $C>0$, not depending on $h$, such that 
		$$\norm{\Pi_\disc u_\disc-\bu}{}\le Ch^2.$$
\end{proposition}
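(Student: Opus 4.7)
The plan is to apply the improved $L^2$ error estimate of Theorem \ref{th-l2-super} to each of the four families of schemes, with the appropriate choice of the interpolant $\P_\disc$, and then verify the four ingredients listed in Remark \ref{remark.improvedl2.dominate}. Since $\bu\in H^4(\O)\cap H^2_0(\O)$ and the adjoint problem \eqref{weak_adjoint} has right-hand side $g\in L^2(\O)$ with elliptic regularity giving $\varphi_g\in H^4(\O)\cap H^2_0(\O)$ (under the usual smoothness hypotheses on $\O$ and the coefficients already used in the paper), it suffices, for $\phi\in\{\bu,\varphi_g\}$, to establish
\[
\bigl\|\hbd\P_\disc\phi-\hb\phi\bigr\|=\mathcal O(h),\quad
\bigl\|\Pi_\disc\P_\disc\phi-\phi\bigr\|=\mathcal O(h^2),\quad
\bigl|\mathcal W_\disc^B(\xi,\P_\disc\phi)\bigr|=\mathcal O(h^2)
\]
for any $\xi\in H^2(\O)^{d\times d}$, together with the already-known $\WS_\disc^B(\phi)=\mathcal O(h)$ from Remark \ref{rates.PDE}. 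Substituting all of these in Theorem \ref{th-l2-super} produces the announced $\mathcal O(h^2)$ bound.

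First I would treat the conforming FEMs (Argyris, BFS): pick $\P_\disc\phi$ to be the canonical nodal interpolant of $\phi$ in $X_{\disc,0}\subset H^2_0(\O)$. Standard interpolation estimates on shape-regular meshes immediately give the $\mathcal O(h^2)$ $L^2$ bound and the $\mathcal O(h)$ $H^2$ bound. The consistency term $\mathcal W_\disc^B(\xi,\P_\disc\phi)$ vanishes exactly in the conforming setting: since $\hbd=\hb$ and $\Pi_\disc$ is the identity on $X_{\disc,0}$, an integration by parts in the definition \eqref{def:W.nr} shows $\mathcal W_\disc^B(\xi,w_\disc)=0$ for every $w_\disc\in X_{\disc,0}$. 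For the GR scheme I would likewise take $\P_\disc\phi$ to be the standard $\mathbb P_1$ Lagrange interpolant and use the design properties of the recovery operator $Q_h$ (exactness on $\mathbb P_1$, boundedness, $H^1$-approximation of $\nabla\phi$ by $Q_h\nabla(\P_\disc\phi)$) together with the stabilisation bound for $\stab_h$; the $L^2$ estimate is classical, the Hessian-like estimate uses the stated approximation properties of $Q_h$, and $\mathcal W_\disc^B(\xi,\P_\disc\phi)$ is handled by rewriting $B\xi:\hbd\P_\disc\phi$ using the definition of $\hbd$ and exploiting the biorthogonality/superconvergence built into $Q_h$ to recover one extra power of $h$.

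Next I would handle the two nonconforming families, which share a common structure. For each triangle/rectangle $K$ let $\P_\disc\phi$ be the canonical (Adini, resp.\ Morley) nodal interpolant of $\phi$, and use the known broken interpolation estimates to get directly the $\mathcal O(h^2)$ bound on $\|\Pi_\disc\P_\disc\phi-\phi\|$ and the $\mathcal O(h)$ bound on $\|\hbd\P_\disc\phi-\hb\phi\|$. The genuine work is in showing $|\mathcal W_\disc^B(\xi,\P_\disc\phi)|=\mathcal O(h^2)$. Writing $w_\disc=\P_\disc\phi$ and applying a cellwise integration by parts twice in \eqref{def:W.nr}, the volume terms cancel and one is left with boundary terms of the form
\[
\sum_{\edge\in\edges_{\rm int}}\int_\edge\bigl[\![(B^\tau B\xi)\,\nabla(\Pi_\disc w_\disc)\cdot n_\edge - \nabla\!\cdot\!(B^\tau B\xi)\,\Pi_\disc w_\disc\,n_\edge\bigr]\!]\,\d s,
\]
plus analogous boundary contributions. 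For the Morley element I would use the defining moment conditions $\int_\edge[\![\partial_n(\Pi_\disc w_\disc)]\!]\d s=0$ and the continuity of $\Pi_\disc w_\disc$ at vertices, subtract on each edge the edge-average of the trace of $B^\tau B\xi$ (a standard trick), and then invoke the trace/Bramble--Hilbert estimate to pick up an extra $\mathcal O(h)$ factor on each term, giving $\mathcal O(h^2)$ overall. The Adini case is analogous but simpler because the interpolant matches $\phi$ and $\nabla\phi$ at vertices on rectangular meshes; one exploits symmetry of opposite faces and the structure of the enrichment $\{x_1x_2^3,x_1^3x_2\}$ to kill the leading order of the edge integrals, again by subtracting averages and applying trace estimates.

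The main obstacle will be this last estimate on $\mathcal W_\disc^B(\xi,\P_\disc\phi)$ for the Morley and Adini schemes: the naive bound only gives $\mathcal O(h)$, and extracting the extra factor requires using the specific moment/nodal conditions of each element together with a careful "subtract the average" argument edge by edge. Since the detailed computations are largely mechanical once the right average is subtracted, these verifications are relegated to the Appendix, and the proof is concluded by plugging the four bounds, together with $\WS_\disc^B=\mathcal O(h)$, into Theorem \ref{th-l2-super}.
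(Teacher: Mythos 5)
Your overall strategy coincides with the paper's: both proofs feed Theorem \ref{th-l2-super} with the interpolation bounds of Lemma \ref{lemma.interpolant} and an $\mathcal O(h^2)$ bound on $\mathcal W_\disc^B(\xi,\P_\disc\phi)$ (Lemma \ref{lemma.limitconformity}), and the conforming and GR cases are handled essentially as you describe. The genuine difference is in how the bound $|\mathcal W_\disc^B(\xi,\P_\disc\phi)|=\mathcal O(h^2)$ is obtained for the two nonconforming elements. You propose the classical Lascaux--Lesaint route: integrate by parts cellwise, reduce to edge jump terms, and gain the extra power of $h$ from the Morley moment conditions (resp.\ the Adini nodal matching) together with edge-average subtraction and trace estimates. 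The paper instead stays at the volume level: for Adini it simply notes that $\|\hbd\P_\disc\phi-\hb\phi\|\le Ch^2$ already (the element contains $\mathbb P_3$, so Cauchy--Schwarz suffices and no edge analysis is needed at all); for Morley it uses the cell-average preservation property $\int_K\hbd\P_\disc\psi\,{\rm d}\x=\int_K\hb\psi\,{\rm d}\x$ of the Morley interpolant and subtracts the cell average $\xi_K$ of $\xi$, so the $\mathcal O(h)$ Hessian interpolation error is paired with the $\mathcal O(h)$ oscillation of $\xi$. Your edge-based argument does go through (for Morley, both components of $[\![\nabla\Pi_\disc\P_\disc\psi]\!]$ have zero edge mean and the jumps of the interpolation error are $\mathcal O(h^{3/2})$, resp.\ $\mathcal O(h^{5/2})$, in $L^2(\edge)$), but it is noticeably heavier than the paper's volume argument, and your claim that the Adini case is ``simpler'' is backwards: the paper's one-line Cauchy--Schwarz disposes of it, whereas the edge route for Adini requires the full rectangle-symmetry machinery. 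One further point shared by both write-ups: the duality argument needs $\varphi_g$ regular enough ($H^3$ for Morley/GR, $H^4$ for Adini) for the interpolation and limit-conformity lemmas to apply, which you flag explicitly and the paper leaves implicit.
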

\noindent Since the super-convergence is not known in general for two point flux approximation (TPFA) for second order problems, it is expected that the same issue occurs for the FVM mentioned in Section \ref{FVM}. In order to obtain an improved result, ideas developed in \cite[Section 4]{jd_nn} for GDM is appropriately modified for the HDM. For that, set
\begin{equation}
v_\edge=\left\{\begin{array}{ll}
\frac{\dist(\x_K,\edge)v_L+\dist(\x_L,\edge)v_\cell}{d_\edge} &\forall \,\edge \in  \edgesint\,,\; \mesh_{\edge}= \{\cell, L\} \\
0 & \forall \, \edge \in \edgesext.
\end{array}\right.
\label{vsigma}
\end{equation}
We now define a slightly modified HDM for FVM based on $\Delta$-adapted discretisations. 
\begin{definition}[Modified FVM $B-$HD]\label{def.modifiedFVM}
	Let $\disc=(X_{\disc,0},\Pi_\disc,\nabla_\disc,\hbd)$ be a FVM $B-$Hessian discretisation given in Section \ref{FVM}. The modified FVM $B-$Hessian discretisation is $\disc^*=(X_{\disc,0},\Pi_\discs,\nabla_\disc,\hbd)$, where the reconstruction function $\Pi_\discs$ is defined by \be\label{def.Pidisc*}
	\forall v_\disc\in X_{\disc,0}\,,\;\forall \cell\in\mesh\,,\;\forall \x\in \cell\,,\\
	\Pi_{\disc^*}v_\disc(\x)=\Pi_\disc v_\disc(\x) + \widetilde{\nabla}_\cell v_\disc\cdot(\x-\x_\cell)
	\ee
	with
	\be
	\widetilde{\nabla}_\cell v_\disc=\frac{1}{|\cell|}\sum_{\edge\in\edgescv}|\edge|v_\edge \: n_{\cell,\edge}.\label{consistentgrad}
	\ee
\end{definition} 
The Hessian scheme corresponding to the modified FVM $B-$HD $\disc^*$ in the sense of Definition \ref{def.modifiedFVM} is given by \eqref{base.HS}, in which only the right-hand side is modified. Thus, the modified FVM has the same matrix as the original FVM. 
\smallskip

Consider now a super-admissible mesh in the sense of \cite[Lemma 13.20]{gdm}, i.e. for $\edge \in \edgesint$ with $\mesh_\edge=\{\cell,L\}$, the straight line $(\x_\cell, \x_L)$ intersects $\edge$ at  $\overline{\x}_\edge$ (similarly on the boundary). This super-admissibility condition is satisfied by rectangles (with $\x_\cell$ the centre of mass of $\cell$) and acute triangles (with $\x_\cell$ the circumcenter of $\cell$).
\begin{proposition}[Superconvergence for modified FVM HD]\label{prop.FVM}
Let $\bu \in H^4(\Omega)\cap H^2_0(\Omega)$ be the solution to \eqref{weak}. Let $u_\discs$ be the solution of the Hessian scheme \eqref{base.HS} for the modified FVM $B-$HD $\discs$ in the sense of Definition \ref{def.modifiedFVM} on a super-admissible mesh. Then for the modified FVM based on $\Delta$-adapted discretisations, there exist a constant $C>0$, independent of $h$, such that
	\begin{equation*}
	\Vert\Pi_\discs u_\discs -\bu\Vert_{}\le C\left\{\begin{array}{ll}h^{1/2}|\ln(h)|^2&\mbox{ if $d=2$}\\
	h^{6/13}&\mbox{ if $d=3$}.\end{array}\right.	
	\end{equation*}
\end{proposition}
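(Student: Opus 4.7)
The plan is to apply Theorem~\ref{th-l2-super} directly to the modified Hessian discretisation $\discs$, with the natural cell–centred interpolant $\P_\discs\phi:=(\phi(\x_\cell))_{\cell\in\mesh}$ for $\phi\in H^4(\O)\cap H^2_0(\O)$ (at boundary cells the value is set to $0$, which only contributes $\mathcal{O}(h^2)$ since $\phi=\nabla\phi=0$ on $\partial\O$). One then has to bound, in turn, each of the six quantities on the right–hand side of Theorem~\ref{th-l2-super} by the target rate $\mathcal{O}(h^{1/2}|\ln h|^2)$ in $d=2$ (resp.\ $\mathcal{O}(h^{6/13})$ in $d=3$).

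Since the space $X_{\disc,0}$ and the operators $\hbd,\nabla_\disc$ are unchanged in passing from $\disc$ to $\discs$, the quantities $\WS_\discs^B(\phi)$ and $\norm{\hbd\P_\discs\phi-\hb\phi}{}=\tfrac{1}{\sqrt d}\norm{\Delta_\disc\P_\discs\phi-\Delta\phi}{}$ can be controlled by adapting the bounds established for the original FVM in \cite{HDM_linear} on $\Delta$--adapted meshes. For $\phi\in H^4(\O)\cap H^2_0(\O)$ this yields $\mathcal{O}(h^{1/4}|\ln h|)$ in $d=2$ and $\mathcal{O}(h^{3/13})$ in $d=3$; squaring produces the product term in Theorem~\ref{th-l2-super} at exactly the target rate. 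For the two $L^2$--reconstruction errors $\norm{\Pi_\discs\P_\discs\bu-\bu}{}$ and $\norm{f}{}\norm{\Pi_\discs\P_\discs\varphi_g-\varphi_g}{}$, the design of $\Pi_\discs$ through \eqref{def.Pidisc*}--\eqref{consistentgrad} is decisive: on a super--admissible mesh, $v_\edge=\phi(\centeredge)+\mathcal{O}(h^2)$ when $v_\disc=\P_\discs\phi$, and the divergence formula applied to the map $\x\mapsto\x$ then gives $\widetilde\nabla_\cell\P_\discs\phi=\nabla\phi(\x_\cell)+\mathcal{O}(h)$. A cell–wise Taylor expansion yields $\Pi_\discs\P_\discs\phi(\x)=\phi(\x)+\mathcal{O}(h^2)$ pointwise, so both reconstruction terms are $\mathcal{O}(h^2)$ and are harmlessly dominated.

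The hard part will be the sharp control of the two cross terms $|\mathcal{W}_\discs^B(\hessian\bu,\P_\discs\varphi_g)|$ and $|\mathcal{W}_\discs^B(\hessian\varphi_g,\P_\discs\bu)|$: bounding them crudely by $W_\discs^B$ times a discrete norm only yields $\mathcal{O}(h^{1/4}|\ln h|)$, far short of what is needed. I would instead exploit $B\hessian\phi=\tfrac{\Delta\phi}{\sqrt d}{\rm Id}$, the identity $\hessian:B^\tau B\hessian\bu=\Delta^2\bu=f$, and the continuous weak formulation $(f,\varphi_g)=(\Delta\bu,\Delta\varphi_g)$, to rewrite
\[
\mathcal{W}_\discs^B(\hessian\bu,\P_\discs\varphi_g)=\int_\O f\,(\Pi_\discs\P_\discs\varphi_g-\varphi_g)\dx+\int_\O\Delta\bu\,(\Delta\varphi_g-\Delta_\disc\P_\discs\varphi_g)\dx.
\]
The first integral is $\mathcal{O}(h^2)$ by the previous step. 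The second is a TPFA Laplacian consistency error paired with the smooth weight $\Delta\bu\in H^2(\O)$; I would bound it by re–indexing the cell sum as an edge sum, using super–admissibility to cancel leading–order flux contributions and the regularity of $\Delta\bu$ to sum up the residuals. This is where the $|\ln h|^2$ factor in $d=2$ and the exponent $6/13$ in $d=3$ arise, via arguments structurally analogous to those establishing the rate for $W_\disc^B$ in \cite{HDM_linear}, but now benefiting from the fact that both inputs are interpolants of smooth functions rather than arbitrary discrete elements. The symmetric term $\mathcal{W}_\discs^B(\hessian\varphi_g,\P_\discs\bu)$ is treated identically after exchanging the roles of $\bu$ and $\varphi_g$, completing the proof.
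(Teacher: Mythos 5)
Your high-level strategy (feed a suitable interpolant into Theorem~\ref{th-l2-super} and check each term) is the same as the paper's, but your choice of interpolant creates a genuine gap. You take $\P_\discs\phi=(\phi(\x_\cell))_{\cell\in\mesh}$ and assert that $\norm{\hbd\P_\discs\phi-\hb\phi}{}=\tfrac{1}{\sqrt d}\norm{\Delta_\disc\P_\discs\phi-\Delta\phi}{}$ ``can be controlled by adapting the bounds established for the original FVM''. It cannot: the TPFA discrete Laplacian of pointwise values is only weakly consistent on general super-admissible meshes (each flux $\delta_{\cell,\edge}v/d_\edge$ carries an $\mathcal{O}(h)$ error, and after multiplying by $|\edge|/|\cell|\sim h^{-1}$ and summing over $\edgescv$ the strong consistency error of $\Delta_\cell\P_\discs\phi$ is generically $\mathcal{O}(1)$, not $o(1)$). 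The bounds in \cite{HDM_linear} concern $S_\disc^B$, a minimum over all of $X_{\disc,0}$ attained by a \emph{different} interpolant. The paper's essential idea, which your proposal misses, is to \emph{define} $\P_\discs\phi$ as the solution of the (modified) TPFA scheme $|\cell|\Delta_\cell\P_\discs\phi=\int_\cell\Delta\phi\dx$: then $\Delta_\disc\P_\discs\phi$ is exactly the $L^2$-orthogonal projection of $\Delta\phi$ onto piecewise constants, so $\norm{\hbd\P_\discs\phi-\hb\phi}{}=\mathcal{O}(h)$, the bound $\norm{\Pi_\discs\P_\discs\phi-\phi}{}\lesssim h^2$ follows from the known TPFA/HMM superconvergence \cite[Theorem 4.6]{jd_nn} (not from a cellwise Taylor expansion), and the cellwise orthogonality $\int_\cell(\hb\phi-\hbd\P_\discs\phi)\dx=0$ reduces the cross terms to $\mathcal{O}(h^2)$ immediately after inserting the piecewise-constant average $B\xi_\cell$.

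Relatedly, you have misplaced where the stated rate comes from. In the paper the two cross terms $\mathcal{W}_\discs^B(\cdot,\P_\discs\cdot)$ are $\mathcal{O}(h^2)$ and harmless; the factors $h^{1/2}|\ln h|^2$ (resp.\ $h^{6/13}$) arise solely from squaring $\WS_\discs^B=\mathcal{O}(h^{1/4}|\ln h|)$ (resp.\ $\mathcal{O}(h^{3/13})$) in the product term, which requires first checking that $C_\discs^B$, $S_\discs^B$, $W_\discs^B$ inherit the original FVM bounds via $\norm{\Pi_\disc v_\disc-\Pi_\discs v_\disc}{}\lesssim h\norm{\hbd v_\disc}{}$ (a step your proposal omits). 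Your rewriting of $\mathcal{W}_\discs^B(\hessian\bu,\P_\discs\varphi_g)$ as $\int_\O f(\Pi_\discs\P_\discs\varphi_g-\varphi_g)\dx+\int_\O\Delta\bu\,(\Delta\varphi_g-\Delta_\disc\P_\discs\varphi_g)\dx$ is algebraically correct, but with your pointwise interpolant the second integral pairs $\Delta\bu$ against an $\mathcal{O}(1)$ consistency defect, and the proposed edge-reindexing/cancellation argument is left entirely unexecuted; with the paper's projection-based interpolant that integral is $\mathcal{O}(h^2)$ by orthogonality with no flux analysis needed.
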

Recalling Remark \ref{rates.PDE}, we see that these rates are an improvement over the rates in $H^2$ norm. Precisely, $L^2$ error estimate decays as the square of the $H^2$ error estimate. 
\subsection{Improved $H^1$ error estimate}\label{sec.H1}
To establish an improved $H^1$ error estimate, consider the following dual problem of \eqref{weak}.
\smallskip

The weak formulation for the dual problem with source term $ q\in H^{-1}(\O)$ seeks $\varphi_q \in V$ such that
\begin{equation}\label{weak_adjoint_h1}
a(w,\varphi_q)=(q,w)\; \mbox{for all $w\in V$.}
\end{equation}
Moreover, when $\O$ is convex, $\varphi_q \in H^3(\O)\cap H^2_0(\O)$ with a priori bound $\norm{\varphi_q}{H^3(\O)}\le \norm{q}{H^{-1}(\O)}$ \cite{HBRR}.
In order to state the $H^1$ error estimate, we need to consider the limit-conformity measure between the reconstructed Hessian $\hbd$ and reconstructed gradient $\nabla_\disc$.  Define
\be\label{def.WDtilde}
\begin{aligned}
	&\forall  \chi \in H^B_{\div}(\O)^d\,,\,\widetilde{W}_\disc^B(\chi)=\max_{w_\disc\in X_{\disc,0}\backslash\{0\}}
	\frac{\Big|\widetilde{\mathcal{W}}_\disc^B(\chi, w_\disc) \Big|}{\norm{\hbd w_\disc}{}},
\end{aligned}
\ee
where $H^B_{\div}(\O)^d=\{\chi \in L^2(\O)^{d \times d}:\, \mbox{div}(B^\tau B\chi) \in L^2(\O)^d\}$ and
\be \label{def.WDtilde.nr}
\widetilde{\mathcal{W}}_\disc^B(\chi, w_\disc):=\int_\O \Big( B\chi:\hbd w_\disc+ \mbox{div}(B^\tau B\chi)\cdot\nabla_\disc w_\disc\Big)\d\x .
\ee
Assume the existence of an operator $E_\disc$ which maps the discrete unknowns to the continuous space of functions. This operator plays a central role in the $H^1$ error estimate analysis for HDM. 

\begin{assumption}[Companion operator]\label{def.ed}
	Let $\disc$ be a $B\textendash$Hessian discretisation in the
	sense of Definition \ref{HD}. There exists a linear map $E_\disc :X_{\disc,0} \rightarrow H^2_0(\O)$ called the companion operator. We define 
	\begin{align}
	\omega(E_\disc):=\sup_{\psi_\disc  \in X_{\disc,0}\setminus{\{0\}}}
	\frac{\norm{\nabla_\disc \psi_\disc-\nabla E_\disc \psi_\disc}{}}{\norm{\hbd \psi_\disc}{}}.\label{sup.ED.est}
	\end{align}
\end{assumption}
 Along a sequence of Hessian discretisations $(\disc_m)_{m \in \N}$, it is expected that the companion operators are defined such that $\omega(E_{\disc_m}) \rightarrow 0$ as $m \rightarrow \infty$. 
For example, an explicit companion operator is well-known for the Morley element with $\omega(E_\disc)=\mathcal{O}(h)$ \cite{Morley_plate}.  

\begin{theorem}[Improved $H^1$ error estimate for Hessian schemes] 
	\label{th-h1-super}  \qquad 
	
	\noindent Let $\bu$ be the solution to \eqref{weak}.
	Let $\disc$ be a Hessian discretisation in the sense of Definition \ref{HD} and $u_\disc$ be the solution to the Hessian scheme \eqref{base.HS}. Assume that there exists a companion operator $E_\disc$ in the sense of Assumption \ref{def.ed} and define
	$$q=\frac{-\Delta E_\disc (u_\disc - \P_\disc \bu)}{\norm{\nabla E_\disc (u_\disc- \P_\disc \bu)}{}} \in H^{-1}(\O).$$
	Assume that the solution to \eqref{weak_adjoint_h1} satisfies $\varphi_q \in H^3(\O)\cap H^2_0(\O)$ and choose $\P_\disc \bu,$ $\P_\disc \varphi_q$ $\in X_{\disc,0}$, where $\P_\disc : H^2_0(\O)\rightarrow X_{\disc,0}$. Then 
	\be\nonumber 
	\begin{aligned}
		\Vert\nabla_\disc u_\disc -\nabla \bu\Vert_{}
		\lesssim{}& \big(\omega(E_\disc)+{\widetilde{W}}_\disc^B(\hessian \varphi_q)\big)\big(\WS_\disc^B(\bu)+\norm{\hb\bu - \hbd\P_\disc \bu }{}\big)\\&+ \norm{\nabla\bu - \nabla_\disc\P_\disc \bu}{}
		+|\widetilde{\mathcal{W}}_\disc^B(\hessian \varphi_q, \P_\disc \bu)|\\
		&+\WS_\disc^B(\bu)\norm{\hb \varphi_q -\hbd\P_\disc \varphi_q}{}+|\mathcal{W}_\disc^B(\hessian \bu,\P_\disc \varphi_q )|,
	\end{aligned}
	\ee	
	where $\omega(E_\disc)$ is defined by \eqref{sup.ED.est}, $\WS_\disc^B$ is defined by \eqref{def.ws}, $\mathcal{W}_\disc^B$ is defined by \eqref{def:W.nr}, ${\widetilde{W}}_\disc^B$ is defined by \eqref{def.WDtilde},
	and $\widetilde{\mathcal{W}}_\disc^B$ is defined by \eqref{def.WDtilde.nr}.
\end{theorem}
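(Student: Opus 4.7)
The plan is to run an Aubin--Nitsche duality argument in the HDM setting, using the companion operator $E_\disc$ to lift the discrete error $\psi_\disc := u_\disc - \P_\disc \bu$ into $H^2_0(\O)$ and exploiting the $H^3$-regularity of $\varphi_q$ (ensured by the convexity of $\O$) through a single integration by parts whose discrete counterpart is measured exactly by $\widetilde{\mathcal{W}}_\disc^B$. First, the triangle inequality $\|\nabla_\disc u_\disc - \nabla \bu\| \le \|\nabla_\disc \psi_\disc\| + \|\nabla_\disc \P_\disc \bu - \nabla \bu\|$ extracts the interpolation term of the bound; Assumption \ref{def.ed} then splits $\|\nabla_\disc \psi_\disc\| \le \omega(E_\disc)\|\hbd \psi_\disc\| + \|\nabla E_\disc \psi_\disc\|$, and Theorem \ref{th:error.est.PDE} combined with a triangle inequality yields $\|\hbd \psi_\disc\| \lesssim \WS_\disc^B(\bu) + \|\hb \bu - \hbd \P_\disc \bu\|$, giving the $\omega(E_\disc)$ contribution to the first product in the bound.

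\textbf{Duality and first IBP.}
The construction of $q$ and an IBP in $H^1_0(\O)$ provide $\|\nabla E_\disc \psi_\disc\| = (q, E_\disc \psi_\disc)$, and testing \eqref{weak_adjoint_h1} against $E_\disc \psi_\disc \in H^2_0(\O)$ gives $(q, E_\disc \psi_\disc) = a(E_\disc \psi_\disc, \varphi_q)$. Since $\varphi_q \in H^3 \cap H^2_0$ and $B$ is constant, one integration by parts yields
\[
a(E_\disc \psi_\disc, \varphi_q) = -\int_\O \nabla E_\disc \psi_\disc \cdot \mbox{div}(B^\tau B \hessian \varphi_q)\,\d\x.
\]
Adding and subtracting $\nabla_\disc \psi_\disc$ inside the integral, Assumption \ref{def.ed} together with the a priori bound $\|\varphi_q\|_{H^3} \lesssim \|q\|_{H^{-1}} \le 1$ controls the gap by $\omega(E_\disc)\|\hbd \psi_\disc\|$, and the definition \eqref{def.WDtilde.nr} of $\widetilde{\mathcal{W}}_\disc^B$ recasts the principal term as $\int \hb \varphi_q : \hbd \psi_\disc\,\d\x - \widetilde{\mathcal{W}}_\disc^B(\hessian \varphi_q, \psi_\disc)$. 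Applying $|\widetilde{\mathcal{W}}_\disc^B(\hessian \varphi_q, \psi_\disc)| \le \widetilde{W}_\disc^B(\hessian \varphi_q)\|\hbd \psi_\disc\|$ furnishes the $\widetilde{W}_\disc^B(\hessian \varphi_q)$ part of the first product in the bound.

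\textbf{Primal scheme and second IBP.}
For the remaining $\int \hb \varphi_q : \hbd \psi_\disc$, I insert $\P_\disc \varphi_q$: the discrepancy $\int(\hb \varphi_q - \hbd \P_\disc \varphi_q):\hbd \psi_\disc$ is bounded, via the energy-error estimate from the first paragraph, by $\WS_\disc^B(\bu)\|\hb \varphi_q - \hbd \P_\disc \varphi_q\|$, and the remaining $a_\disc(\P_\disc \varphi_q, \psi_\disc)$ is expanded using the primal scheme \eqref{base.HS} tested against $\P_\disc \varphi_q$, the strong form $f = \hessian : B^\tau B \hessian \bu$ implied by \eqref{problem}, and the definition \eqref{def:W.nr} of $\mathcal{W}_\disc^B$:
\[
a_\disc(\P_\disc \varphi_q, \psi_\disc) = \mathcal{W}_\disc^B(\hessian \bu, \P_\disc \varphi_q) + \int_\O (\hb \bu - \hbd \P_\disc \bu):\hbd \P_\disc \varphi_q\,\d\x.
\]
Adding and subtracting $\hb \varphi_q$ in this last integral and invoking the continuous IBP $\int \hb \bu : \hb \varphi_q\,\d\x = -\int \nabla \bu \cdot \mbox{div}(B^\tau B \hessian \varphi_q)\,\d\x$ (valid since $\bu \in H^2_0$ and $\varphi_q \in H^3$) together with the definition of $\widetilde{\mathcal{W}}_\disc^B(\hessian \varphi_q, \P_\disc \bu)$, yields the $|\mathcal{W}_\disc^B(\hessian \bu, \P_\disc \varphi_q)|$, $|\widetilde{\mathcal{W}}_\disc^B(\hessian \varphi_q, \P_\disc \bu)|$, and $\|\nabla \bu - \nabla_\disc \P_\disc \bu\|$ contributions of the statement.

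\textbf{Main obstacle.}
The central difficulty is bookkeeping rather than any individual inequality: every occurrence of $\hbd u_\disc$ or $\nabla_\disc u_\disc$ must be routed through a single use of the primal scheme tested against $\P_\disc \varphi_q$ so that the only non-interpolation residuals surviving are the two conformity defects $\mathcal{W}_\disc^B(\hessian \bu, \P_\disc \varphi_q)$ and $\widetilde{\mathcal{W}}_\disc^B(\hessian \varphi_q, \P_\disc \bu)$ evaluated at interpolants, and the two IBPs (for $\varphi_q$ tested against $E_\disc \psi_\disc$ and against $\bu$) must align so that $\widetilde{\mathcal{W}}_\disc^B(\hessian \varphi_q, \P_\disc \bu)$ emerges cleanly. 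Residual cross-products of primal- and dual-side interpolation errors arising along the way (such as $\|\hb \bu - \hbd \P_\disc \bu\|\,\|\hb \varphi_q - \hbd \P_\disc \varphi_q\|$) are of lower order than the quantities already listed in the bound and are absorbed into them.
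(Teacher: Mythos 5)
Your proposal follows essentially the same route as the paper's proof: the triangle inequality, the lift $\|\nabla_\disc \psi_\disc\|\le\omega(E_\disc)\|\hbd\psi_\disc\|+\|\nabla E_\disc\psi_\disc\|$, the duality identity $\|\nabla E_\disc\psi_\disc\|=a(E_\disc\psi_\disc,\varphi_q)$, the extraction of the conformity defects $\widetilde{\mathcal{W}}_\disc^B(\hessian\varphi_q,\P_\disc\bu)$ and $\mathcal{W}_\disc^B(\hessian\bu,\P_\disc\varphi_q)$ via the primal scheme tested with $\P_\disc\varphi_q$, and the final a priori bound $\norm{\varphi_q}{H^3(\O)}\lesssim 1$. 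The one step to tighten is your closing remark: the residual cross-product $\norm{\hb\bu-\hbd\P_\disc\bu}{}\,\norm{\hb\varphi_q-\hbd\P_\disc\varphi_q}{}$ cannot be ``absorbed'' into the stated right-hand side in the abstract framework, since nothing forces $\norm{\hb\bu-\hbd\P_\disc\bu}{}\lesssim \WS_\disc^B(\bu)$ for an arbitrary choice of $\P_\disc\bu$. No absorption is needed, however: the two integrals you bound separately sum exactly to $\int_\O(\hbd u_\disc-\hb\bu):(\hb\varphi_q-\hbd\P_\disc\varphi_q)\,{\rm d}\x$, which the energy estimate \eqref{err.est.PDE} bounds by $\WS_\disc^B(\bu)\,\norm{\hb\varphi_q-\hbd\P_\disc\varphi_q}{}$ with no leftover; this is precisely the grouping (the term $T_{2,2}$) used in the paper, and with it your argument reproduces the stated bound exactly.
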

The proof of Theorem \ref{th-h1-super} is given in Section \ref{sec.proofH1}.
\begin{remark}
	Following Remark \ref{remark.improvedl2.dominate}, for FEMs and methods based on GR operators, Theorem \ref{th-h1-super} gives an improved error estimate in $H^1$ norm if $\norm{\nabla\phi- \nabla_\disc\P_\disc \phi}{}=\mathcal{O}(h^2)$, $\widetilde{\mathcal{W}}_\disc^B(\chi, \P_\disc \phi)=\mathcal{O}(h^2)$, $\omega(E_\disc)=\mathcal{O}(h)$ and ${\widetilde{W}}_\disc^B(\chi)=\mathcal{O}(h)$ for all $\phi \in H^4(\O)\cap H^2_0(\O)$ and all $\chi \in H^1(\O)^{d \times d}$.
\end{remark}
\begin{remark}
	The companion operators actually come with estimates on function, gradient given by \eqref{sup.ED.est} and Hessian (see e.g., \cite{Morley_plate}). The estimates on function and Hessian are not needed in the error analysis and hence we leave them undefined.
\end{remark}
\smallskip

The following proposition talks about the discrete $H^1$ error estimate for lower order conforming and non-conforming FEMs and the proof is given in Section \ref{appendix}, Appendix.
 \begin{proposition}\label{ncfem:supercv.h1}
Let $\bu\in H^4(\O)\cap H^2_0(\O)$ be the solution to \eqref{weak} and $u_\disc$ be the solution to the Hessian scheme \eqref{base.HS}. Then, for low-order conforming FEMs, and Adini and Morley ncFEMs, there exists a constant $C$, not depending on $h$, such that
 	$$\norm{\nabla_\disc u_\disc-\nabla\bu}{}\le Ch^2.$$
 \end{proposition}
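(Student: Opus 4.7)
The plan is to apply Theorem \ref{th-h1-super} to each of the three families of schemes, with $\P_\disc$ chosen as the natural (nodal) interpolant of the scheme. The strategy is to show that each of the six contributions on the right-hand side of the estimate decays at least like $h^2$. In view of Remark \ref{rates.PDE} and Proposition \ref{ncfem:supercv} (and its proof), the ``consistency'' contributions $\WS_\disc^B(\bu)$, $\norm{\hb\bu-\hbd\P_\disc\bu}{}$, $\norm{\hb\varphi_q-\hbd\P_\disc\varphi_q}{}$ are already known to be $\mathcal{O}(h)$ under the regularity $\bu\in H^4$ and $\varphi_q\in H^3$, so the task is to ensure that each term they multiply decays like $\mathcal{O}(h)$, and that the three remaining ``product-free'' terms $\norm{\nabla\bu-\nabla_\disc\P_\disc\bu}{}$, $|\widetilde{\mathcal{W}}_\disc^B(\hessian\varphi_q,\P_\disc\bu)|$ and $|\mathcal{W}_\disc^B(\hessian\bu,\P_\disc\varphi_q)|$ decay like $\mathcal{O}(h^2)$.

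For low-order conforming FEMs, I would take $E_\disc$ to be the identity inclusion $X_{\disc,0}\hookrightarrow H^2_0(\O)$, so that $\omega(E_\disc)=0$ and both measures $\mathcal{W}_\disc^B$ and $\widetilde{\mathcal{W}}_\disc^B$ vanish identically (the discrete integrations by parts are exact). Only the super-convergence of $\norm{\nabla\bu-\nabla_\disc\P_\disc\bu}{}$ remains, which follows from the standard $\mathcal{O}(h^2)$ interpolation estimate in $H^1$ for $H^4$ functions (e.g. BFS, Argyris).

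For the Adini and Morley ncFEMs, I would use companion operators $E_\disc: X_{\disc,0}\to H^2_0(\O)$ known from the literature (explicitly constructed for Morley in \cite{Morley_plate}; an analogue exists for Adini rectangles via a similar bubble-enrichment construction). Both satisfy $\omega(E_\disc)=\mathcal{O}(h)$. The limit-conformity measure $\widetilde{W}_\disc^B(\hessian\varphi_q)=\mathcal{O}(h)$ follows by a standard element-by-element integration by parts: on each cell $\cell$ we rewrite
\[
\int_\cell\bigl(B\hessian\varphi_q:\hbd w_\disc + \mathrm{div}(B^\tau B\hessian\varphi_q)\cdot \nabla_\disc w_\disc\bigr)\d\x
\]
as a sum of face integrals involving the jumps of $\nabla_\disc w_\disc$; since the Adini and Morley spaces have vanishing mean-value of the normal component of these jumps and $\varphi_q\in H^3$, a trace/Bramble–Hilbert argument on each face yields the $\mathcal{O}(h)$ bound. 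The same mechanism, applied to $\P_\disc\bu$ in place of $w_\disc$ and exploiting the fact that $\nabla_\disc\P_\disc\bu-\nabla\bu$ is of order $h^2$ in $L^2$ (mean-value of the interpolation error on each edge vanishes), yields the $\mathcal{O}(h^2)$ bound for $|\widetilde{\mathcal{W}}_\disc^B(\hessian\varphi_q,\P_\disc\bu)|$. The term $|\mathcal{W}_\disc^B(\hessian\bu,\P_\disc\varphi_q)|$ is handled in the exact same fashion as in the proof of Proposition \ref{ncfem:supercv}, giving $\mathcal{O}(h^2)$. Finally, $\norm{\nabla\bu-\nabla_\disc\P_\disc\bu}{}=\mathcal{O}(h^2)$ follows from classical nodal interpolation error estimates on $H^4$ functions.

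Combining all contributions via Theorem \ref{th-h1-super}, one gets $\norm{\nabla_\disc u_\disc-\nabla\bu}{}\lesssim h\cdot h + h^2 + h^2 + h\cdot h + h^2 = \mathcal{O}(h^2)$. The main technical obstacle is the super-convergence estimate $|\widetilde{\mathcal{W}}_\disc^B(\hessian\varphi_q,\P_\disc\bu)|=\mathcal{O}(h^2)$ for Morley/Adini: the natural face-by-face bound only gives $\mathcal{O}(h)$, and one must extract the extra factor of $h$ by combining the vanishing mean-value of $\llbracket\partial_n(\P_\disc\bu-\bu)\rrbracket$ on each face with a Bramble–Hilbert argument applied to a face-wise mean-free correction of $B^\tau B\hessian\varphi_q$, relying on $\varphi_q\in H^3(\O)$. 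All the remaining estimates are essentially those already assembled for the $L^2$ analysis in Proposition \ref{ncfem:supercv}.
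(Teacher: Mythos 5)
Your overall architecture coincides with the paper's: apply Theorem \ref{th-h1-super} with the nodal interpolant $\P_\disc$, take $E_\disc$ to be the inclusion for conforming elements (so $\omega(E_\disc)=0$ and both limit-conformity measures vanish) and the literature companion operators for Adini and Morley with $\omega(E_\disc)=\mathcal O(h)$, then check that every term on the right-hand side contributes $\mathcal O(h^2)$. The genuine divergence is in the one nontrivial estimate, $|\widetilde{\mathcal{W}}_\disc^B(\hessian \varphi_q,\P_\disc\bu)|=\mathcal O(h^2)$. You extract the extra power of $h$ on the faces, pairing the vanishing edge mean of $\llbracket\nabla_{\mesh}(\P_\disc\bu-\bu)\rrbracket$ with a Bramble--Hilbert correction of $A\hessian\varphi_q\,n_\edge$; this is sound for Morley (both normal and tangential components of the jump of the Morley interpolation error have zero edge mean, and the trace estimates give $h^{1/2}\cdot h^{3/2}$ per face). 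The paper instead stays in the volume: it writes $\widetilde{\mathcal{W}}_\disc^B(\chi,\P_\disc\bu)=\int_\O B\chi:(\hbd\P_\disc\bu-\hb\bu)\d\x+\int_\O \div(A\chi)\cdot(\nabla_\disc\P_\disc\bu-\nabla\bu)\d\x$, bounds the second term by Cauchy--Schwarz and interpolation, and for Morley handles the first by subtracting the piecewise-constant projection of $B\chi$ and invoking the cell-mean identity $\int_\cell\hbd\P_\disc\bu\d\x=\int_\cell\hb\bu\d\x$ of \eqref{Morley.Pd}, thereby recycling Lemma \ref{lemma.limitconformity} verbatim; both routes are valid, the paper's being shorter because it reuses the $L^2$ machinery. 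One caveat on your side: for Adini the jumps of the normal derivative do not have zero edge mean (they merely vanish at the vertices), so neither your justification of $\widetilde{W}_\disc^B(\chi)=\mathcal O(h)$ nor your face-based superconvergence mechanism transfers literally to that element; but no trick is needed there, since the Adini interpolant already satisfies $\norm{\hbd\P_\disc\bu-\hb\bu}{}\le Ch^2$ and $\norm{\nabla_\disc\P_\disc\bu-\nabla\bu}{}\le Ch^3$, so the volume bound is immediate --- which is exactly what the paper does (citing the $\mathcal O(h)$ bound on $\widetilde{W}_\disc^B$ from the earlier HDM analysis rather than re-deriving it). The remaining ingredients you list ($\norm{\nabla\bu-\nabla_\disc\P_\disc\bu}{}=\mathcal O(h^2)$ by interpolation, and $\mathcal{W}_\disc^B(\hessian\bu,\P_\disc\varphi_q)=\mathcal O(h^2)$ by the argument of the $L^2$ proposition) match the paper's proof.
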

 	
	\begin{remark}\label{remark.GRFVMh1}
		The construction of a companion operator $E_\disc$ for the method based on gradient recovery operators  with $\omega(E_\disc)$ small enough is an open problem. Though there is a difficulty of constructing a proper companion operator and hence improved $H^1$ theoretical rate of convergence are not obtained, we observe that the numerical rates in $H^1$ norm are better (see Table \ref{eg.square2}). In numerical test for FVM, the $H^2$ and $H^1$ estimated rates of convergences appear to be both of order 1 (\cite[Section 6]{HDM_linear}). This seems to indicate that we cannot expect an improved estimate in $H^1$ norm compared to the estimate in energy norm. Hence, the FVM method is probably not amenable to an application of Theorem \ref{th-h1-super} (which is an indication that there might not exist, for this method, a proper companion operator).
	\end{remark}

\subsection{Estimates for Morley HDM}\label{sec.Morley}
The following theorem (proof provided in Section \ref{sec.proofMorley}) establishes practical estimates on the quantities \eqref{def.CD}--\eqref{def.WD}. This helps in establishing the convergence of the scheme. 
\begin{theorem}\label{th.Morley}
	Let $\disc$ be a $B$-Hessian discretisation for the Morley element in the sense of Definition \ref{def.Morley}. Then, there exists a constant $C$, not depending on $\disc$, such that
	\begin{itemize}
		\item  $C_\disc^B \le C$,
		\item  $ \forall\: \varphi \in H^3(\O)\cap H^2_0(\O) \quad S_{\disc}^B(\varphi) \le Ch\norm{\varphi}{H^3(\O)}$,
		\item $\forall \:\xi \in H^2({\O})^{d \times d}, \, $ $W_{\disc}^B(\xi)\le Ch\norm{\xi}{H^2({\O})^{d \times d}}.$
	\end{itemize}
\end{theorem}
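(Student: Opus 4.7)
My plan is to handle the three assertions of Theorem \ref{th.Morley} in turn. Steps one and two are relatively standard; the bulk of the technical work is concentrated in the limit-conformity estimate.

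For the coercivity bound, I first establish the discrete Poincaré--Friedrichs inequality $\norm{\Pi_\disc w_\disc}{} + \norm{\nabla_\disc w_\disc}{} \lesssim \norm{\hessian_\mesh \Pi_\disc w_\disc}{}$ for every $w_\disc \in X_{\disc,0}$. This can be done by going through an enriching operator $E_h : V_h \to H^2_0(\O)$ (a companion operator in the sense of Assumption \ref{def.ed}) satisfying $\norm{w_h - E_h w_h}{} + h\norm{\nabla_\mesh w_h - \nabla E_h w_h}{} + h^2\norm{\hessian_\mesh w_h - \hessian E_h w_h}{} \lesssim h^2 \norm{\hessian_\mesh w_h}{}$, then applying \eqref{coer:B} to $E_h w_h \in H^2_0(\O)$ together with the triangle inequality. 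Combined with the identity $\hbd w_\disc = B\hessian_\mesh \Pi_\disc w_\disc$, this yields $C_\disc^B \le C$. For consistency, I test the minimum in \eqref{def.SD} with the canonical Morley interpolant $I_M\varphi \in V_h$, determined by the nodal values of $\varphi$ at interior vertices and the mean values of $\partial\varphi/\partial n$ on the edges. The classical Morley interpolation estimates $\norm{\varphi - \Pi_\disc I_M\varphi}{} + h\norm{\nabla\varphi - \nabla_\mesh \Pi_\disc I_M\varphi}{} + h^2\norm{\hessian\varphi - \hessian_\mesh \Pi_\disc I_M\varphi}{} \le Ch^3 \norm{\varphi}{H^3(\O)}$, combined with the boundedness of $B$, then directly yield $S_\disc^B(\varphi) \le Ch\norm{\varphi}{H^3(\O)}$.

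For the limit-conformity estimate, set $\eta = B^\tau B\xi \in H^2(\O)^{d\times d}$ and observe that $B\xi : \hbd w_\disc = \eta : \hessian_\mesh w_h$ with $w_h = \Pi_\disc w_\disc$. Two cell-wise integrations by parts rewrite
\[
\mathcal{W}_\disc^B(\xi, w_\disc) = \sum_{\edge\in\edges}\int_\edge \Big[(\div\eta \cdot n_\edge)\llbracket w_h\rrbracket - (\eta\, n_\edge)\cdot \llbracket \nabla w_h\rrbracket\Big] \d\sigma,
\]
with the usual convention that the jump across a boundary edge equals the interior trace. Decomposing $\llbracket \nabla w_h\rrbracket = \llbracket \partial_n w_h\rrbracket n_\edge + \llbracket \partial_t w_h\rrbracket t_\edge$, the zero-mean Morley condition on normal derivatives allows subtracting the mean of $\eta n_\edge\cdot n_\edge$ from its coefficient at no cost, while the vertex continuity of $w_h$ forces $\llbracket w_h\rrbracket$ to vanish at the endpoints of $\edge$ and hence $\int_\edge \llbracket \partial_t w_h\rrbracket \d\sigma = 0$, enabling the mean of $\eta n_\edge \cdot t_\edge$ and of $\div\eta\cdot n_\edge$ to be subtracted similarly. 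Scaled Poincaré--Wirtinger inequalities on each edge then bound the resulting oscillations of these coefficients by $h\norm{\eta}{H^2(\omega_\edge)}$, where $\omega_\edge$ denotes the patch of cells sharing $\edge$; trace and inverse inequalities convert the edge jump norms into $\norm{\hessian_\mesh w_h}{L^2(\omega_\edge)}$ (for $\llbracket w_h\rrbracket$, the endpoint vanishing is first used to bound its $L^2(\edge)$ norm by $h$ times the $L^2(\edge)$ norm of $\llbracket \partial_t w_h\rrbracket$). Summing over $\edge$ with the finite-overlap property from mesh regularity gives $|\mathcal{W}_\disc^B(\xi, w_\disc)| \lesssim h\norm{\xi}{H^2(\O)^{d\times d}} \norm{\hessian_\mesh w_h}{}$, and dividing by $\norm{\hbd w_\disc}{}$, which is comparable to $\norm{\hessian_\mesh w_h}{}$ by the coercivity argument of Step one, concludes the bound on $W_\disc^B(\xi)$.

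The main obstacle is the bookkeeping of this last step: correctly pairing each Morley jump condition with the coefficient it allows one to centre, and in particular handling the $\llbracket w_h\rrbracket$ contribution, which is not directly controlled by any Morley condition and must be treated indirectly through its vanishing at the vertices combined with the zero-mean tangential-jump trick. Once the correct pairing is in place, the remaining estimates reduce to routine applications of Poincaré--Wirtinger, trace, and inverse inequalities on the reference element, together with mesh regularity.
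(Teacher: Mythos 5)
Your proof is correct overall, but it diverges from the paper's in one place and is more explicit in another. For the coercivity bound, the paper does not go through an enriching operator: it applies Lemma \ref{jump_bb_gradient} twice (a discrete Poincar\'e inequality in the broken/dG norm, proved via the edge Poincar\'e inequality of Lemma \ref{Poincare_edge_lp} and trace estimates), using that $\llbracket \Pi_\disc v_\disc\rrbracket$ vanishes at vertices and $\llbracket\nabla_\disc v_\disc\rrbracket$ at edge midpoints, to get $\norm{\Pi_\disc v_\disc}{}\le C\norm{\nabla_\disc v_\disc}{}\le C\norm{\hessian_\disc v_\disc}{}$ before invoking \eqref{coer:B}. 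Your route via a companion operator $E_h$ with $\norm{w_h-E_hw_h}{}+h\norm{\nabla_\mesh w_h-\nabla E_hw_h}{}+h^2\norm{\hessian_\mesh w_h-\hessian E_hw_h}{}\lesssim h^2\norm{\hessian_\mesh w_h}{}$ also works (such an operator exists for Morley, and the paper itself uses it later for the $H^1$ estimate), but it imports a heavier tool than needed; the paper's argument is more elementary and self-contained. The consistency step is identical. For limit-conformity, the paper performs the same cellwise double integration by parts and then delegates the jump estimates to ``an appropriate modification of \cite[Lemma 3.5]{PLPL75}''; what you write out (splitting $\llbracket\nabla w_h\rrbracket$ into normal and tangential parts, centring the coefficients using the Morley zero-mean normal-jump condition and the zero-mean tangential jump induced by vertex continuity, then scaled Poincar\'e--Wirtinger, trace and inverse inequalities) is precisely the content of that cited Lascaux--Lesaint-type lemma, so you are supplying details the paper omits. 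One small inaccuracy: you cannot ``subtract the mean of $\div\eta\cdot n_\edge$'' from the coefficient of $\llbracket w_h\rrbracket$, since a piecewise quadratic jump vanishing at the two endpoints of $\edge$ need not have zero integral over $\edge$; this is harmless, however, because the direct bound you also give, $\norm{\llbracket w_h\rrbracket}{L^2(\edge)}\lesssim h_\edge\norm{\llbracket\partial_t w_h\rrbracket}{L^2(\edge)}\lesssim h_\edge^{3/2}\norm{\hessian_\mesh w_h}{L^2(\omega_\edge)}$, already pairs with the $O(h^{-1/2})$ trace of $\div\eta$ to give the required $O(h)$ contribution without any centring.
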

	The following result is a straightforward consequence of Theorems \ref{th.Morley} and \ref{th:error.est.PDE}.
\begin{corollary}[Convergence]\label{cor:HD.coerciveB}
	Let $(\disc_m)_{m \in \N}$ be a sequence of $B\textendash$Hessian discretisations for the Morley element associated with a mesh $\mesh_m$ such that $h_m\to 0$ as $m\to\infty$, with $B$ satisfying estimate \eqref{coer:B}. Then $\Pi_{\disc_m} u_{\disc_m}\rightarrow \bu$, $\nabla_{\disc_m} u_{\disc_m} \rightarrow \nabla\bu$ and	$\hb_{\disc_m} u_{\disc_m} \rightarrow \hb \bu$ as $m\to\infty$.
\end{corollary}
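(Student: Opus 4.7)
The plan is to verify the three bounds in turn. For coercivity, since $\hbd w_\disc = B\hessian_\mesh(\Pi_\disc w_\disc)$ cell-wise and $B$ is constant, the coercivity assumption \eqref{coer:B} transferred to the piecewise polynomial setting gives $\norm{\hessian_\mesh \Pi_\disc w_\disc}{} \lesssim \norm{\hbd w_\disc}{}$. The continuity of $\Pi_\disc w_\disc$ at interior vertices, its vanishing at boundary vertices, and the Morley zero-mean condition $\int_\edge \llbracket \partial_n \Pi_\disc w_\disc\rrbracket\,\d s = 0$ (with the corresponding boundary condition) yield a broken Poincar\'e--Friedrichs inequality $\norm{\Pi_\disc w_\disc}{} + \norm{\nabla_\mesh \Pi_\disc w_\disc}{} \lesssim \norm{\hessian_\mesh \Pi_\disc w_\disc}{}$, which gives $C_\disc^B \le C$.

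\textbf{Consistency.} For the consistency estimate, I would take $w_\disc$ equal to the canonical Morley interpolant $I_M\varphi \in X_{\disc,0}$, defined by matching the vertex values of $\varphi$ and the edge averages of $\partial_n\varphi$; these degrees of freedom are well-defined for $\varphi \in H^3(\O)\cap H^2_0(\O)$ in $d=2$. The standard Bramble--Hilbert scaling on each triangle yields
\[
\|\varphi - \Pi_\disc I_M\varphi\|_{L^2(\cell)} + h_\cell\|\nabla\varphi - \nabla\Pi_\disc I_M\varphi\|_{L^2(\cell)} + h_\cell^2\|\hessian\varphi - \hessian\Pi_\disc I_M\varphi\|_{L^2(\cell)} \le Ch_\cell^3 |\varphi|_{H^3(\cell)}.
\]
Summing, using $\hbd I_M\varphi = B\hessian_\mesh\Pi_\disc I_M\varphi$ and the boundedness of $B$, the Hessian term dominates and produces $S_\disc^B(\varphi) \le Ch\norm{\varphi}{H^3(\O)}$.

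\textbf{Limit-conformity, and the main obstacle.} The core ingredient is a twofold cell-wise integration by parts. For $\xi \in H^2(\O)^{d\times d}$ and $w_\disc \in X_{\disc,0}$, using $B\xi : B\hessian\Pi_\disc w_\disc = B^\tau B\xi : \hessian\Pi_\disc w_\disc$ cell-wise, one gets on each cell $\cell$
\[
\int_\cell \bigl((\hessian:B^\tau B\xi)\Pi_\disc w_\disc - B\xi:\hbd w_\disc\bigr) = -\int_{\partial\cell}(B^\tau B\xi\,n_\cell)\cdot\nabla\Pi_\disc w_\disc + \int_{\partial\cell}(\mbox{div}(B^\tau B\xi)\cdot n_\cell)\Pi_\disc w_\disc.
\]
Summing over $\mesh$ reassembles the boundary integrals into edge sums against $\llbracket\nabla\Pi_\disc w_\disc\rrbracket$ and $\llbracket\Pi_\disc w_\disc\rrbracket$ (boundary edges contributing the cell trace). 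Two structural properties of the Morley space then drive the estimate: (i) the edge-averaged jump of $\nabla\Pi_\disc w_\disc$ vanishes---its normal component by the Morley DOF condition, its tangential component because $\Pi_\disc w_\disc$ agrees at edge endpoints on interior edges and vanishes there on boundary edges; (ii) $\llbracket\Pi_\disc w_\disc\rrbracket$ vanishes at both endpoints of each edge (interior or boundary). Property (i) allows subtracting the edge average of $B^\tau B\xi\,n_\edge$ from the first integrand at no cost, producing a factor $h_\edge$ through a 1D Poincar\'e inequality on $\edge$; property (ii) gives the 1D Friedrichs bound $\|\llbracket\Pi_\disc w_\disc\rrbracket\|_{L^2(\edge)} \le Ch_\edge\|\llbracket\partial_t\Pi_\disc w_\disc\rrbracket\|_{L^2(\edge)}$ in the second integral. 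Combining with Cauchy--Schwarz, standard trace and inverse inequalities at the cell level, mesh regularity, and the cell-wise $H^2$ norm of $\xi$, the edge contributions sum to $|\mathcal W_\disc^B(\xi,w_\disc)| \le Ch\|\xi\|_{H^2(\O)^{d\times d}}\norm{\hbd w_\disc}{}$, whence $W_\disc^B(\xi) \le Ch\|\xi\|_{H^2(\O)^{d\times d}}$. The main obstacle is this limit-conformity bound: both Morley cancellation properties must be exploited simultaneously (one for each of the two boundary terms) so that every edge integral delivers one factor $h_\edge$, and some bookkeeping is needed to make sure the interior and boundary edges fit the same jump convention so no boundary term is left uncontrolled.
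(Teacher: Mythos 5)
Your text is, in substance, a proof of Theorem \ref{th.Morley} --- the bounds $C_\disc^B\le C$, $S_\disc^B(\varphi)\le Ch\norm{\varphi}{H^3(\O)}$ and $W_\disc^B(\xi)\le Ch\norm{\xi}{H^2(\O)^{d\times d}}$ --- and for that theorem it follows essentially the paper's own route (coercivity via a broken Poincar\'e inequality exploiting the vanishing of the jumps of $\Pi_\disc w_\disc$ at vertices and of $\nabla_\mesh\Pi_\disc w_\disc$ at edge midpoints, consistency via the canonical Morley interpolant, limit-conformity via two cell-wise integrations by parts and the Morley jump cancellations, for which the paper defers to Lascaux--Lesaint). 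One local caveat: as literally written, your handling of the boundary term involving $\llbracket\Pi_\disc w_\disc\rrbracket$ gives only $O(1)$, since a single edge Poincar\'e inequality produces one factor $h_\edge$ that is consumed by the two trace inequalities; you need to iterate the edge Poincar\'e inequality, using that $\llbracket\nabla_\mesh\Pi_\disc w_\disc\rrbracket$ vanishes at the edge midpoint, to gain the extra factor of $h$.

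The genuine gap, however, is that the statement you were asked to prove is the convergence corollary, and its conclusion --- $\Pi_{\disc_m}u_{\disc_m}\to\bu$, $\nabla_{\disc_m}u_{\disc_m}\to\nabla\bu$, $\hb_{\disc_m}u_{\disc_m}\to\hb\bu$ --- is never derived. Two steps are missing. First, you never invoke the generic error estimate \eqref{err.est.PDE} of Theorem \ref{th:error.est.PDE}, which bounds all three errors by $W^B_{\disc_m}(\hessian\bu)+S^B_{\disc_m}(\bu)$ up to a constant controlled by $\sup_m C^B_{\disc_m}$ (this is where your coercivity bound enters); the paper's proof of the corollary consists precisely of combining Theorem \ref{th.Morley} with Theorem \ref{th:error.est.PDE}. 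Second, and more substantively, your quantitative estimates require $\varphi\in H^3(\O)\cap H^2_0(\O)$ and $\xi\in H^2(\O)^{d\times d}$, whereas the corollary assumes no regularity on $\bu$ beyond $\bu\in H^2_0(\O)$ and $\hessian\bu\in\wdspace(\O)$; on a nonconvex polygon the solution need not lie in $H^3(\O)$, so one cannot simply substitute $\varphi=\bu$ and $\xi=\hessian\bu$ into your $O(h)$ bounds. The missing ingredient is the density argument indicated in the remark following Theorem \ref{th:error.est.PDE}: the uniform bound on $C^B_{\disc_m}$ makes $\varphi\mapsto S^B_{\disc_m}(\varphi)$ and $\xi\mapsto W^B_{\disc_m}(\xi)$ equi-Lipschitz on $H^2_0(\O)$ and $\wdspace(\O)$ respectively, so their convergence to zero on the dense subspaces $H^3(\O)\cap H^2_0(\O)$ and $H^2(\O)^{d\times d}$ (which is what your estimates provide) extends to $S^B_{\disc_m}(\bu)\to 0$ and $W^B_{\disc_m}(\hessian\bu)\to 0$, after which \eqref{err.est.PDE} yields the three claimed convergences.
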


	\section{ Numerical Results}\label{sec.eg}
	The results of the numerical experiments for the GR method and the modified FVM  are presented in this section. Consider the biharmonic problem $\Delta^2 \bu=f$ on $\O$ with homogeneous clamped boundary conditions. 
	\subsection{Gradient Recovery Method}
Let the relative errors in $L^2(\O)$, $H^1(\O)$ and $H^2(\O)$ norms be denoted by
\begin{align*}
&\err_\disc(\bu):=\frac{\norm{\Pi_\disc u_\disc -\bu}{}}{\norm{\bu}{}},\quad
\err_\disc(\nabla\bu) :=\frac{\norm{\nabla_\disc u_\disc -\nabla\bu}{}}{\norm{\nabla\bu}{}},\\
&
\err_\disc(\hessian\bu) :=\frac{\norm{\nabla Q_h \nabla u_\disc -\hessian\bu}{}}{\norm{\hessian\bu}{}},
\end{align*}
where $u_\disc$ is the solution to the Hessian scheme \eqref{base.HS}. We refer the reader to \cite{BL_stab.mixedfem} for implementation procedure. To determine the effect of the stabilisation function $\stab_h$ on the results, we multiply it by a factor $\rho$ that takes the values 0.001, 1, and 10.
	\subsubsection{Example 1} Let $\O=(0,1)^2$. Figure \ref{femsquaremesh} shows the initial triangulation of a square domain and its uniform refinement. In this example, we choose the right-hand side load function $f$ such that the exact solution is given by $\bu(x,y) =\sin^2(\pi x)\sin^2(\pi y)$. 
	\begin{figure}[ht]
		\centering
		\begin{minipage}[b]{0.45\linewidth}
			\includegraphics[width=12cm]{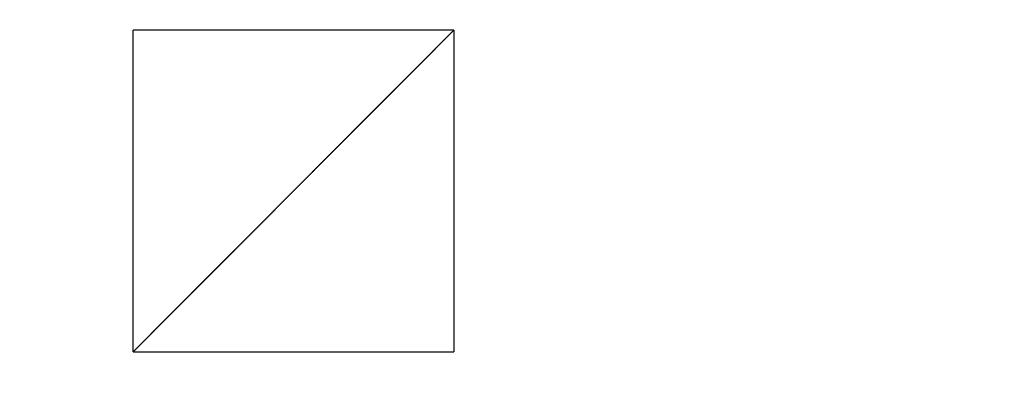}
		\end{minipage}
		\quad
		\begin{minipage}[b]{0.45\linewidth}
			\includegraphics[width=12cm]{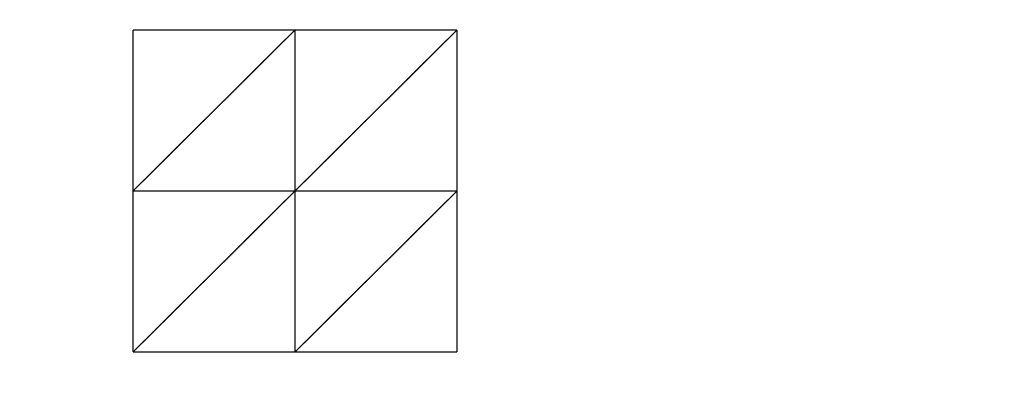}
		\end{minipage}
		\caption{Initial triangulation and uniform refinement of square domain}
		\label{femsquaremesh}
	\end{figure}
The computed errors and orders of convergence in the energy, $H^1$ and $L^2$ norms with $\rho=1$ are shown in Table \ref{eg.square2}. As seen in the table, we obtain linear order of convergence in the energy norm and quadratic order of convergence in $L^2$ norm, which agrees with
the theoretical result in Proposition \ref{ncfem:supercv}. Using gradient recovery operator, a quadratic rate of convergence is obtained in the $H^1$ norm (see Remark \ref{remark.GRFVMh1} for that). 
		
		\begin{table}[h!!]
			\caption{\small{(GR) Convergence results for the relative errors, Example 1, $\rho=1$}}
			{\small{\footnotesize
					\begin{center}
						\begin{tabular}{ ||c||c|c||c|c|| c|c||c|c||c|c||}
							\hline
							$h$ &$\err_\disc(\bu)$ & Order  &  $\err_\disc(\nabla \bu)$ & Order  &$\err_\disc(\hessian \bu)$ & Order  \\ 
							\hline\\[-10.5pt]  &&\\[-9.5pt]
0.353553&3.124409& -    &0.721457& -& 0.855054& -\\
0.176777&0.145381&4.4257&0.099974&2.8513& 0.246640& 1.7936\\
0.088388&0.036224&2.0048&0.023098&2.1138& 0.116470& 1.0824\\
0.044194&0.009068&1.9982&0.005552&2.0566& 0.057308& 1.0232\\
0.022097&0.002261&2.0037&0.001363&2.0266& 0.028470& 1.0093\\
0.011049&0.000564&2.0032&0.000338&2.0116& 0.014198&1.0037\\			
	\hline				
						\end{tabular}
					\end{center}	}}\label{eg.square2}
				\end{table}							
		\subsubsection{Example 2} In this example, we consider the non-convex L--shaped domain given by $\Omega=(-1,1)^2 \setminus\big{(}[0,1)\times(-1,0]\big{)}$. Figure \ref{femLshapemesh} shows the initial triangulation of a L-shaped domain and its uniform refinement. The source term $f$ is chosen such that the model problem has the following exact singular solution \cite{Grisvard92}:
		\begin{align*}
		{u}=(r^2 \cos^2\theta-1)^2 (r^2 \sin^2\theta-1)^2 r^{1+ \gamma}g_{\gamma,\omega}(\theta),
		\end{align*}
		where $(r,\theta)$ denote the polar coordinates, $ \gamma\approx 0.5444837367$ is a non-characteristic 
		root of $\sin^2( \gamma\omega) =  \gamma^2\sin^2(\omega)$, $\omega=\frac{3\pi}{2}$, and $g_{\gamma,\omega}(\theta)=(\frac{1}{\gamma-1}\sin ((\gamma-1)\omega)-\frac{1}{ \gamma+1}\sin(( \gamma+1)\omega))(\cos(( \gamma-1)\theta)-\cos(( \gamma+1)\theta))$ 
		$-(\frac{1}{\gamma-1}\sin(( \gamma-1)\theta)-\frac{1}{ \gamma+1}\sin(( \gamma+1)\theta))
		(\cos(( \gamma-1)\omega)-\cos(( \gamma+1)\omega)).$
		\begin{figure}[ht]
			\centering
			\begin{minipage}[b]{0.45\linewidth}
				\includegraphics[width=12cm]{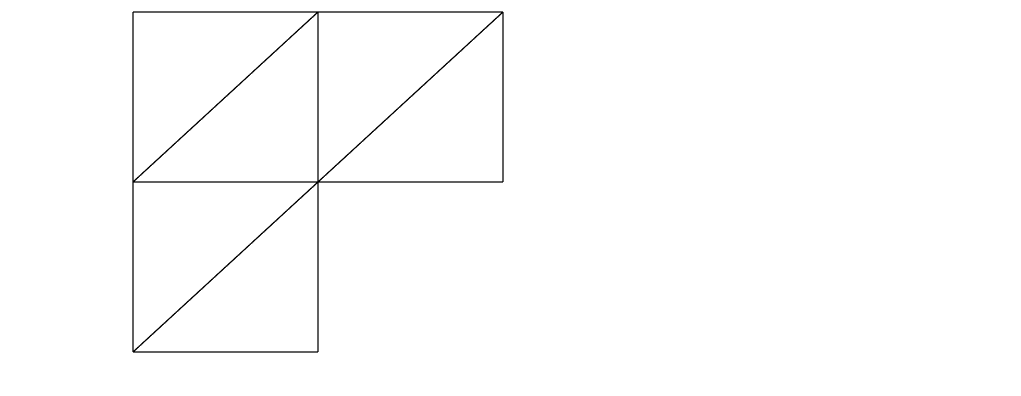}
			\end{minipage}
			\quad
			\begin{minipage}[b]{0.45\linewidth}
				\includegraphics[width=12cm]{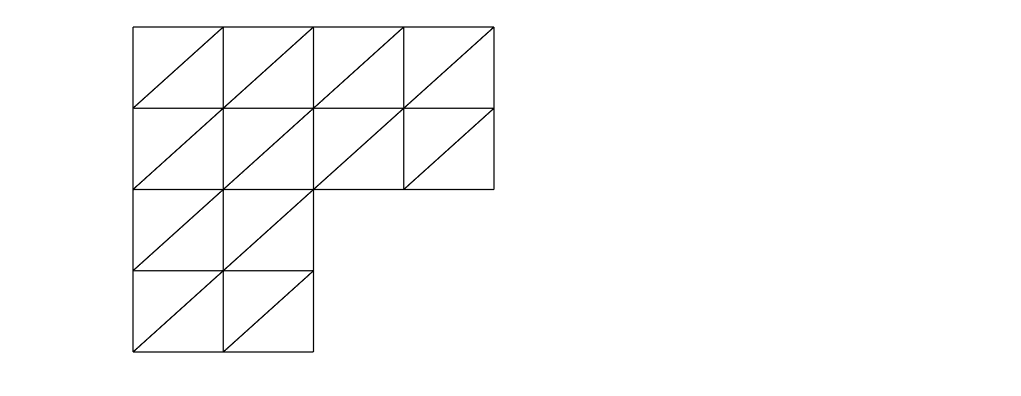}
			\end{minipage}
			\caption{Initial triangulation and uniform refinement of L-shaped domain}
			\label{femLshapemesh}
		\end{figure}
 The errors and rates of convergence are reported in Tables \ref{eg.Lshaped1}--\ref{eg.Lshaped3} respectively. This example is particularly interesting since the solution is less regular due to the corner singularity. The domain $\Omega$ being nonconvex, we expect only suboptimal orders of convergence in the energy, $H^1$ and $L^2$ norms, and this can be clearly seen from the tables. For instance, the convergence rate in $L^2$ norm is 1.5, which is suboptimal. As in Example 1, the numerical rates in $H^1$ norm are similar to those in $L^2$ norm. This improved order of convergence in $H^1$ norm is obtained with the help of gradient recovery operator (see Proposition \ref{ncfem:supercv} and Remark \ref{remark.GRFVMh1}). It can be seen that the stabilisation parameter $\rho$ has a very small impact on the numerical results.

	\begin{table}[h!!]
		\caption{\small{(GR) Convergence results for the relative errors, Example 2, $\rho=0.001$}}
		{\small{\footnotesize
				\begin{center}
					\begin{tabular}{ ||c||c|c||c|c|| c|c||c|c||c|c||}
						\hline
						$h$ &$\err_\disc(\bu)$ & Order  &  $\err_\disc(\nabla \bu)$ & Order  &$\err_\disc(\hessian \bu)$ & Order  \\ 
						\hline\\[-10.5pt]  &&\\[-9.5pt]
0.353553& 1.488937&  -& 0.394870& -&0.504144&-\\
0.176777&0.185753& 3.0028&0.139904& 1.4969& 0.218736& 1.2046\\
0.088388&0.058874&  1.6577&0.045530& 1.6196& 0.116520& 0.9086\\
0.044194&0.018039&1.7065& 0.013756& 1.7267& 0.065220& 0.8372\\
0.022097&0.005400& 1.7401& 0.004197& 1.7128&  0.038827&   0.7483\\
0.011049&0.001681& 1.6835& 0.001396& 1.5882& 0.024390& 0.6707\\
0.005524&0.000570& 1.5617& 0.000526& 1.4085& 0.015899&  0.6174\\
						\hline				
					\end{tabular}
				\end{center}	}}\label{eg.Lshaped1}
			\end{table}	
			
	\begin{table}[h!!]
		\caption{\small{(GR) Convergence results for the relative errors, Example 2, $\rho=1$}}
		{\small{\footnotesize
	\begin{center}
			\begin{tabular}{ ||c||c|c||c|c|| c|c||c|c||c|c||}
						\hline
	$h$ &$\err_\disc(\bu)$ & Order & $\err_\disc(\nabla \bu)$ & Order  &$\err_\disc(\hessian \bu)$ & Order  \\ 
						\hline\\[-10.5pt]  &&\\[-9.5pt]
0.353553&0.447227&    -& 0.377554&  -& 0.441034&  -\\
0.176777&0.177626& 1.3322& 0.142208& 1.4087&  0.217792& 1.0180\\
0.088388&0.059387& 1.5806& 0.046087& 1.6256&  0.115943& 0.9095\\
0.044194&0.018023& 1.7203&0.013886& 1.7307&  0.064817& 0.8390\\
0.022097&0.005360& 1.7496&0.004231& 1.7147& 0.038615& 0.7472\\
0.011049&0.001661& 1.6897&   0.001406&   1.5894& 0.024290& 0.6688\\
0.005524&0.000562&1.5629&   0.000529& 1.4100& 0.015854& 0.6156\\
						\hline				
					\end{tabular}
				\end{center}	}}\label{eg.Lshaped2}
			\end{table}

	\begin{table}[h!!]
\caption{\small{(GR) Convergence results for the relative errors, Example 2, $\rho=10$}}
		{\small{\footnotesize
	\begin{center}
		\begin{tabular}{ ||c||c|c|c|c|| c|c||c|c||c|c||}
										\hline
$h$ &$\err_\disc(\bu)$ & Order  & $\err_\disc(\nabla \bu)$ & Order  &$\err_\disc(\hessian \bu)$ & Order  \\ 
	\hline\\[-10.5pt]  &&\\[-9.5pt]
0.353553&0.488271&  -&0.422393&-& 0.472514&  -   \\
0.176777&0.197355&1.3069&   0.162455&  1.3785&0.226725& 1.0594\\
0.088388&0.064165& 1.6209&0.050639&  1.6817& 0.116820& 0.9567\\
0.044194&0.019077& 1.7500&0.014842& 1.7706& 0.064360& 0.8601\\
0.022097&0.005598& 1.7688&0.0044406&  1.7408& 0.038226&0.7516\\
0.011049&0.001718& 1.7041&0.001455&  1.6102&  0.024090& 0.6662\\
0.005524&0.000576& 1.5759&0.000541& 1.4277& 0.015763& 0.6119\\
										\hline				
									\end{tabular}
								\end{center}	}}\label{eg.Lshaped3}
							\end{table}				
\subsection{Modified Finite Volume Method}
The numerical tests for FVM discussed in Section \ref{FVM} are performed in \cite[Section 6]{HDM_linear}. In this section, three numerical experiments that justify the theoretical result in Proposition \ref{prop.FVM} for modified FVM are presented. We conduct the test on a series of regular triangular meshes (\texttt{mesh1} family) taken from \cite{benchmark} over the unit square $\O=(0,1)^2$. The orthogonality property is satisfied with the point $\x_\cell \in \cell$ chosen as the circumcenter of $K$. Let the relative errors in $L^2(\O)$, $H^1(\O)$ and $H^2(\O)$ norms be denoted by
\begin{align*}
	&\err_\discs(\bu):=\frac{\norm{\Pi_\discs u_\discs -\bu}{}}{\norm{\bu}{}},\quad\err_\discs(\nabla\bu) :=\frac{\norm{\nabla_\disc u_\discs -\nabla\bu}{}}{\norm{\nabla\bu}{}},\\
&	\err_\discs(\Delta\bu) :=\frac{\norm{\Delta_\disc u_\discs -\Delta \bu}{}}{\norm{\Delta\bu}{}},
\end{align*}
where $u_\discs$ is the solution to the Hessian scheme \eqref{base.HS} corresponding to the HD $\discs$ given by Definition \ref{def.modifiedFVM}. 
\subsubsection{Example 1}\label{eg1.FVM}In the first example, choose the right hand side function such that the exact solution is given by $\bu(x,y) = x^2y^2(1-x)^2(1-y)^2$. The error estimates and convergence rates in the energy, $H^1$ and $H^2$ norms are presented in Table \ref{eg1.triangle}. We obtain a quadratic (or slightly better) rate of convergence in $L^2$ norm, linear rate of convergence is $H^1$ norm and sub-linear rate of convergence in $H^2$ norm. Note that the numerical test provides better result compared to the theoretical result, see Proposition \ref{prop.FVM}. The numerical results for modified FVM are similar to those for the FVM. 
	\begin{table}[h!!]
		\caption{\small{(Modified FV) Convergence results, Example 1}}
		{\small{\footnotesize
				\begin{center}
					\begin{tabular}{ ||c||c| c||c| c ||c|c||}
						\hline
						$h$  &$\err_\discs(\bu)$ & Order &$\err_\discs(\nabla\bu)$ & Order  &$\err_\discs(\Delta\bu)$ & Order  \\ 
						\hline\\[-10pt]  &&\\[-10pt]
0.250000& 0.095132& -&   0.236554&   -&   0.134417&-\\
0.125000& 0.024787&1.9403& 0.130595&0.8571&0.068112&0.9807\\
0.062500& 0.005981&2.0511& 0.066013&0.9843&0.038204&0.8342\\
0.031250& 0.001353&2.1442& 0.033053&0.9979&0.022618&0.7562\\
0.015625& 0.000267&2.3415& 0.016526&1.0000&0.014154&0.6763\\
0.007813& 0.000035&2.9347& 0.008262&1.0003&0.009281&0.6089\\
						\hline				
					\end{tabular}
				\end{center}	}} \label{eg1.triangle}
			\end{table}			
\subsubsection{Example 2}\label{eg2.FVM}
In this case, we consider $\bu(x,y) = x^2y^2(1-x)^2(1-y)^2(\cos(2\pi x)+\sin(2\pi y))$. The numerical results, presented in Table \ref{eg2.triangle}, are similar to those obtained for Example 1.		
					
\begin{table}[h!!]
	\caption{\small{(Modified FV) Convergence results, Example 2}}
	{\small{\footnotesize
		\begin{center}
			\begin{tabular}{ ||c||c| c||c| c ||c|c|| }
										\hline
	$h$ &$\err_\discs(\bu)$ & Order &$\err_\discs(\nabla\bu)$ & Order  &$\err_\discs(\Delta\bu)$ & Order  \\ 
		\hline\\[-10pt]  &&\\[-10pt]
   0.250000& 0.230644&     -& 0.458624&  -    & 0.190768&-\\
   0.125000& 0.046952&2.2964& 0.193505& 1.2449& 0.078850&1.2746\\
   0.062500& 0.009022&2.3797& 0.092859& 1.0593& 0.041327&0.9320\\
   0.031250& 0.002089&2.1105& 0.045960& 1.0147& 0.021572&0.9379\\
   0.015625& 0.000502&2.0562& 0.022921& 1.0037& 0.011457&0.9130\\
   0.007813& 0.000120&2.0643& 0.011453& 1.0010& 0.006318&0.8587\\
										\hline				
									\end{tabular}
								\end{center}	}}\label{eg2.triangle}
							\end{table}					
\subsubsection{Example 3}\label{eg3.FVM}
The exact solution is chosen to be $\bu(x,y) =  x^3y^3(1-x)^3(1-y)^3(\exp(x)\sin(2\pi x)+\cos(2\pi x))$. The convergence results are presented in Table \ref{eg3.triangle}. In this example, an $\mathcal{O}(h)$ convergence rate is obtained in $H^2$ norm. Since there is no improvement of the rates from $H^2$ to $H^1,$ as mentioned in Remark \ref{remark.GRFVMh1}, we cannot expect an improved $H^1$ estimate for FVM.
\begin{table}[h!!]
	\caption{\small{(Modified FV) Convergence results, Example 3}}
	{\small{\footnotesize
			\begin{center}
				\begin{tabular}{ ||c||c| c||c| c ||c|c|| }
					\hline
					$h$ &$\err_\discs(\bu)$ & Order &$\err_\discs(\nabla\bu)$ & Order  &$\err_\discs(\Delta\bu)$ & Order  \\ 
					\hline\\[-10pt]  &&\\[-10pt]
0.250000&0.410550&-&0.704301& -&  0.295782&-\\
0.125000&0.029103&3.8183&0.212960&1.7256& 0.084328&1.8104\\
0.062500&0.008773&1.7301&0.096846&1.1368& 0.041288&1.0303\\
0.031250&0.002041&2.1037&0.047833&1.0177& 0.020896&0.9825\\
0.015625&0.000503&2.0203&0.023843&1.0044& 0.010486&0.9947\\
0.007813&0.000125&2.0048&0.011913&1.0011& 0.005249&0.9984\\
					\hline				
				\end{tabular}
			\end{center}	}}\label{eg3.triangle}
		\end{table}	
		\begin{remark}
		For rectangular meshes, in order to satisfy the orthogonality property, $\x_\cell \in \cell$ is chosen as the centre of mass of $\cell$. From \cite[Theorem 5.3]{jd_nn}, it follows that the difference between the source term of modified FVM and original FVM is of $\mathcal{O}(h^2)$. Therefore similar rate of convergence is obtained for modified FVM, since we see an $\mathcal{O}(h^2)$ convergence rate in $L^2$ and $H^1$ norms for FVM in \cite[Section 6]{HDM_linear}.
		\end{remark}								
\section{Proof of the main results}\label{sec.proof} 
The proof of the main results stated in Section \ref{sec.results} are provided in this section. Subsection \ref{sec.proofL2} deals with the proof of improved $L^2$ estimate (Theorem \ref{th-l2-super}) and the proof of improved $H^1$ estimate (Theorem \ref{th-h1-super}) is presented in Subsection \ref{sec.proofH1}. In Subsection \ref{sec.proofMorley}, the estimates associated with the Morley HDM (Theorem \ref{th.Morley}) are derived.
\subsection{Proof of the improved $L^2$ estimate} \label{sec.proofL2}
To prove Theorem \ref{th-l2-super}, we shall make use of the following Lemma, which estimates the error associated with the continuous bilinear form $a(\cdot,\cdot)$ and discrete bilinear form $a_\disc(\cdot,\cdot)$.
\begin{lemma}  \label{conts.discrete.bilinear}
	Let
	$\psi, \phi \in H^2_0(\O)$ be such that $\hessian:A\hessian\psi \in L^2(\O)$
	and $\hessian: A\hessian \phi\in L^2(\O)$.
	Then, for any $\psi_\disc,\phi_\disc\in X_{\disc,0}$, the following holds:
	\begin{align} \label{a_estimate}
	|a(\psi, \phi) - a_\disc(\psi_\disc, \phi_\disc)| \le  \eaaD_\disc(\psi,\phi,\psi_\disc,\phi_\disc),
	\end{align}
	where 
	\begin{align} 
		\eaaD_\disc(\psi,\phi,\psi_\disc,\phi_\disc)  ={}&  |\mathcal{W}_\disc^B(\hessian \psi, \phi_\disc)| +
		|\mathcal{W}_\disc^B(\hessian \phi, \psi_\disc)| +\norm{\Pi_\disc \psi_\disc-\psi}{} \norm{\hessian:A\hessian \phi}{} \nonumber
		\\
		  +\norm{\Pi_\disc \phi_\disc&-\phi}{}  \norm{\hessian:A\hessian \psi}{}  +\norm{\hbd \psi_\disc -\hb \psi}{}\norm{\hbd \phi_\disc - \hb \phi}{}.  \label{def.ED}
	\end{align}
\end{lemma}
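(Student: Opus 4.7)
The plan is to build a single exact identity for the difference $a(\psi,\phi) - a_\disc(\psi_\disc,\phi_\disc)$ and then apply Cauchy--Schwarz to each term. The starting point is the simple algebraic expansion
\begin{equation*}
\int_\O (\hbd\psi_\disc - \hb\psi):(\hbd\phi_\disc - \hb\phi) \d\x = a_\disc(\psi_\disc,\phi_\disc) + a(\psi,\phi) - \int_\O \hbd\psi_\disc:\hb\phi \d\x - \int_\O \hb\psi:\hbd\phi_\disc \d\x,
\end{equation*}
which rearranges to
\begin{equation*}
a(\psi,\phi) - a_\disc(\psi_\disc,\phi_\disc) = 2 a(\psi,\phi) - \int_\O \hbd\psi_\disc:\hb\phi \d\x - \int_\O \hb\psi:\hbd\phi_\disc \d\x - \int_\O (\hbd\psi_\disc - \hb\psi):(\hbd\phi_\disc - \hb\phi) \d\x.
\end{equation*}
The cross term $\int_\O (\hbd\psi_\disc - \hb\psi):(\hbd\phi_\disc - \hb\phi) \d\x$ is immediately controlled by Cauchy--Schwarz to produce the last term of \eqref{def.ED}, so it remains to massage the other three pieces.

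Next I would split $2a(\psi,\phi)$ symmetrically. Since $\psi,\phi \in H^2_0(\O)$ and $\hessian:A\hessian\psi$, $\hessian:A\hessian\phi\in L^2(\O)$, two integrations by parts yield
\begin{equation*}
a(\psi,\phi) = \int_\O A\hessian\psi:\hessian\phi \d\x = \int_\O (\hessian:A\hessian\psi)\phi \d\x = \int_\O (\hessian:A\hessian\phi)\psi \d\x,
\end{equation*}
so that $2a(\psi,\phi) = \int_\O (\hessian:A\hessian\psi)\phi \d\x + \int_\O (\hessian:A\hessian\phi)\psi \d\x$. For the two mixed integrals $\int_\O \hb\psi:\hbd\phi_\disc$ and $\int_\O \hbd\psi_\disc:\hb\phi$, invoke the definition \eqref{def:W.nr} of $\mathcal{W}_\disc^B$ with $\xi = \hessian\psi$ and $\xi=\hessian\phi$ respectively (recalling $A=B^\tau B$), giving
\begin{equation*}
\int_\O \hb\psi:\hbd\phi_\disc \d\x = \int_\O (\hessian:A\hessian\psi)\Pi_\disc\phi_\disc \d\x - \mathcal{W}_\disc^B(\hessian\psi,\phi_\disc),
\end{equation*}
and the symmetric identity with $\psi$ and $\phi$ exchanged.

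Substituting these three expressions back into the rearrangement gives the exact identity
\begin{equation*}
a(\psi,\phi) - a_\disc(\psi_\disc,\phi_\disc) = \int_\O (\hessian:A\hessian\psi)(\phi - \Pi_\disc\phi_\disc)\d\x + \int_\O (\hessian:A\hessian\phi)(\psi - \Pi_\disc\psi_\disc)\d\x
\end{equation*}
\begin{equation*}
+ \, \mathcal{W}_\disc^B(\hessian\psi,\phi_\disc) + \mathcal{W}_\disc^B(\hessian\phi,\psi_\disc) - \int_\O (\hbd\psi_\disc - \hb\psi):(\hbd\phi_\disc - \hb\phi)\d\x.
\end{equation*}
Taking absolute values and applying Cauchy--Schwarz to the first two integrals and to the last one produces precisely the five terms of $\eaaD_\disc(\psi,\phi,\psi_\disc,\phi_\disc)$.

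There is essentially no hard step; the only delicate point is bookkeeping the two integrations by parts required to move from $\int_\O A\hessian\psi : \hessian\phi$ to $\int_\O (\hessian:A\hessian\psi)\phi$ (and making sure the boundary traces of $\phi$ and $\nabla\phi$ vanish — granted because $\phi \in H^2_0(\O)$), together with the symmetry $A=B^\tau B$ so that $\hessian:B^\tau B\hessian\psi$ coincides with $\hessian:A\hessian\psi$ inside $\mathcal{W}_\disc^B$. Once these are in place, the identity is purely algebraic and the bound follows in one Cauchy--Schwarz step.
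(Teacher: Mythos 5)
Your proof is correct and follows essentially the same route as the paper: both arrive at the identical exact identity expressing $a(\psi,\phi)-a_\disc(\psi_\disc,\phi_\disc)$ as the sum of two $(\hessian:A\hessian\cdot)$-weighted function errors, the two limit-conformity defects $\mathcal{W}_\disc^B$, and the product of Hessian errors, and then conclude by Cauchy--Schwarz. The only cosmetic difference is that the paper organises the algebra as a three-term splitting $T_1+T_2+T_3$ before integrating by parts, whereas you expand the product of Hessian differences first and rearrange; the resulting identity and estimate are the same.
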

\begin{proof} 
	Use the definitions of $a(\cdot,\cdot)$ and $a_\disc (\cdot,\cdot)$ and perform elementary manipulations to obtain
	\begin{align}
	a(\psi, \phi) - a_\disc(\psi_\disc, \phi_\disc)
	& = \int_\O \hb \psi : \hb \phi \d\x - \int_\O \hbd \psi_\disc :\hbd \phi_\disc  \d\x\nonumber\\
	&=\int_\O (\hb \psi-\hbd \psi_\disc) : \hb \phi \d\x\nonumber\\
	&\qquad + \int_\O (\hbd \psi_\disc - \hb \psi) : (\hb \phi -\hbd\phi_\disc) \d\x \nonumber\\
	&\qquad +\int_\O  \hb \psi : (\hb \phi -\hbd \phi_\disc ) \d\x=: T_1 + T_2 + T_3.	\label{a_T}
	\end{align}
	$T_1$ can be estimated using integration by parts twice and \eqref{def:W.nr}.
	\begin{align*} 
	T_1 
	& =  \int_\O \psi (\hessian:A\hessian \phi) \d\x + \mathcal{W}_\disc^B( \hessian \phi, \psi_\disc) 
	-\int_\O  (\hessian:A \hessian \phi)  \Pi_\disc \psi_\disc \d\x.
	\end{align*}
	Hence, by the Cauchy--Schwarz inequality, this gives
	\begin{align} \label{a_T1}
	|T_1| & \le |\mathcal{W}_\disc^B(\hessian \phi, \psi_\disc)|  + \norm{\hessian:A \hessian \phi}{}
	\norm{\psi - \Pi_\disc \psi_\disc}{}.
	\end{align}
	A use of the Cauchy--Schwarz inequality leads to an upper bound for the term $T_2$ as
	\begin{align} \label{a_T2}
	|T_2| & \le  \norm{\hb \psi - \hbd \psi_\disc}{}
	\norm{\hb \phi - \hbd \phi_\disc}{}.
	\end{align}
	The term $T_3$ is estimated exactly as $T_1$ interchanging the roles of $(\psi, \psi_\disc)$ and 
	$(\phi,\phi_\disc)$, which leads to
	\begin{align} \label{a_T3}
	|T_3| & \le |\mathcal{W}_\disc^B(\hessian \psi, \phi_\disc)|  + \norm{\hessian:A \hessian \psi}{}
	\norm{\phi - \Pi_\disc \phi_\disc}{}.
	\end{align}
	A substitution of the estimates \eqref{a_T1}--\eqref{a_T3} into  \eqref{a_T} leads to \eqref{a_estimate}.
\end{proof}
We now prove the main result given by Theorem \ref{th-l2-super}. Note that the proof is obtained by modification of the arguments of \cite[Theorem 3.1]{jd_nn} in the GDM framework to that of HDM.
\begin{proof}[Proof of Theorem \ref{th-l2-super}]
	Choose $w=\bu$ in \eqref{weak_adjoint} and $w_\disc=u_\disc$ in \eqref{adj.GS},
	\be \label{proof.1}
	\Vert \bu-\Pi_\disc u_\disc\Vert_{}=(g, \bu - \Pi_\disc u_\disc)  =a(\bu, \varphi_g) - a_\disc(u_\disc, \varphi_{g,\disc}).
	\ee
 Since $\bu$ and $\varphi_g$ both belong to $H^2_0(\O)$ with $\hessian: A\hessian \bu=f\in L^2(\O)$ and $\hessian: A\hessian \varphi_g=g\in L^2(\O)$,
	a use of \eqref{a_estimate} in \eqref{proof.1} with some manipulations lead to 
	\begin{align}
	\Vert \bu-\Pi_\disc u_\disc\Vert_{}
	&=a(\bu, \varphi_g) -a_\disc(\P_\disc \bu, \P_\disc \varphi_g)+a_\disc(\P_\disc \bu, \P_\disc \varphi_g)- a_\disc(u_\disc, \varphi_{g,\disc})\nonumber\\
	 \le{}& \eaaD_\disc(\bu,\varphi_g,\P_\disc \bu,\P_\disc \varphi_g)+ a_\disc(\P_\disc \bu, \P_\disc \varphi_g) - a_\disc(u_\disc, \varphi_{g,\disc})  \nonumber \\
	 = {}& a_\disc(\P_\disc \bu, \P_\disc \varphi_g -\varphi_{g,\disc}) +  a_\disc(\P_\disc \bu - u_\disc,  \varphi_{g,\disc}) \nonumber \\
	&  +
	\eaaD_\disc(\bu,\varphi_g,\P_\disc \bu,\P_\disc \varphi_g)  =: T_1 + T_2 +  \eaaD_\disc(\bu,\varphi_g,\P_\disc \bu,\P_\disc \varphi_g).
	\label{proof.2}
	\end{align}
 An introduction of $a(\bu,\varphi_g)$, a use of the triangle inequality, \eqref{a_estimate}, \eqref{weak_adjoint} with $ w=\bu$, \eqref{adj.GS} with $w_\disc=\P_\disc \bu$  and the Cauchy--Schwarz inequality yields
	\begin{align} \label{T1}
	|T_{1}| &\le|a(\bu,\varphi_g) -a_\disc(\P_\disc \bu , \varphi_{g,\disc}) |+|a_\disc(\P_\disc \bu , \P_\disc\varphi_g)-a(\bu,\varphi_g) |\nonumber\\
	&\le |a(\bu,\varphi_g) -a_\disc(\P_\disc \bu , \varphi_{g,\disc}) |
	+  \eaaD_\disc(\bu,\varphi_g,\P_\disc \bu,\P_\disc\varphi_g)  \nonumber \\
	&\le  |(g,\bu - \Pi_\disc \P_\disc \bu)| +  \eaaD_\disc(\bu,\varphi_g,\P_\disc \bu,\P_\disc\varphi_g) \nonumber \\
	& \le \norm{g}{}\norm{\bu - \Pi_\disc \P_\disc \bu}{}+ \eaaD_\disc(\bu,\varphi_g,\P_\disc \bu,\P_\disc\varphi_g).
	\end{align}
	We now turn to $T_2$. Introduce the terms $a_\disc(\P_\disc \bu, \P_\disc \varphi_g)$, $a_\disc(u_\disc,\P_\disc \varphi_{g})$ and choose $v_\disc=\P_\disc \varphi_g- \varphi_{g,\disc}$ in \eqref{base.HS} to deduce
	\begin{align} 
	T_2 
	={}& - a_\disc(\P_\disc \bu, \P_\disc \varphi_g-\varphi_{g,\disc}) + a_\disc(u_\disc,  \P_\disc \varphi_g- \varphi_{g,\disc})
	+ a_\disc(\P_\disc \bu-u_\disc, \P_\disc \varphi_g) \nonumber \\
	={}& -\big[a_\disc(\P_\disc \bu, \P_\disc \varphi_g- \varphi_{g,\disc}) -
	(f, \Pi_\disc(\P_\disc \varphi_g- \varphi_{g,\disc}))\big]   + a_\disc(\P_\disc \bu-u_\disc, \P_\disc \varphi_g) \nonumber\\
	={}&-T_{2,1}+T_{2,2}.
	\label{T2}
	\end{align}
	Since $\hessian:A\hessian\bu=f$, \eqref{def:W.nr} yields
	\begin{align*}
	T_{2,1}={}&\int_\Omega \big(\hb\bu: \hbd (\P_\disc \varphi_g- \varphi_{g,\disc}) 
	- f \Pi_\disc(\P_\disc \varphi_g- \varphi_{g,\disc})\big) \d\x\\
	&+ 
	\int_\O( \hbd \P_\disc \bu- \hb\bu):\hbd( \P_\disc \varphi_g - \varphi_{g,\disc})\d\x\\
	={}&\int_\O(\hbd \P_\disc \bu-\hb \bu) :(\hbd \P_\disc \varphi_g - \hbd \varphi_{g,\disc}) \d\x-\mathcal{W}_\disc^B(\hessian\bu,\P_\disc \varphi_g- \varphi_{g,\disc}).
	\end{align*}
	Therefore, apply \eqref{def.WD}, the Cauchy--Schwarz inequality, \eqref{def.ws}, a triangle inequality and \eqref{err.est.PDE} to obtain
	\begin{align}
	|T_{2,1}|\le{}& W_\disc^B(\hessian\bu)\norm{\hbd \P_\disc \varphi_g-\hbd  \varphi_{g,\disc}}{}+\norm{\hbd \P_\disc \bu-\hb \bu}{}
	\norm{\hbd \P_\disc \varphi_g-\hbd  \varphi_{g,\disc}}{}\nonumber\\
	\lesssim{}&\norm{\hbd \P_\disc \varphi_g-\hbd  \varphi_{g,\disc}}{}\big(\WS_\disc^B(\bu)+\norm{\hbd \P_\disc \bu-\hb \bu}{}\big)\nonumber\\
	\lesssim{}&\big(\norm{\hbd \P_\disc \varphi_g-\hb \varphi_g}{}+\WS_\disc^B(\varphi_g)\big)\big(\norm{\hbd \P_\disc \bu-\hb \bu}{}+\WS_\disc^B(\bu)\big).\label{T21}
	\end{align}
	The term $T_{2,2}$ is similar to $T_1$, upon swapping the primal and dual problems, that is $(f,\bu,u_\disc,g,\varphi_g,\varphi_{g,\disc})
	\leftrightarrow (g,\varphi_g,\varphi_{g,\disc},f,\bu,u_\disc)$. Hence, from \eqref{T1},
	\begin{align} \label{T22}
	|T_{2,2}|
	& \le \norm{f}{}\norm{\varphi_g - \Pi_\disc  \P_\disc\varphi_g}{}+ \eaaD_\disc(\bu,\varphi_g,\P_\disc \bu,\P_\disc\varphi_g).
	\end{align}
	Plug the estimates \eqref{T21} and \eqref{T22} in \eqref{T2} to obtain
	\begin{align}
	|T_2|\lesssim{}&\big(\norm{\hbd \P_\disc \bu-\hb \bu}{}+\WS_\disc^B(\bu)\big)\big(\norm{\hbd \P_\disc \varphi_g-\hb \varphi_g}{}+\WS_\disc^B(\varphi_g)\big)\nonumber\\
	&+\norm{f}{}\norm{\varphi_g - \Pi_\disc \P_\disc \varphi_g}{}+ \eaaD_\disc(\bu,\varphi_g,\P_\disc \bu,\P_\disc\varphi_g).
	\label{est.T2}
	\end{align}
	A substitution of \eqref{T1} and 
	\eqref{est.T2} in \eqref{proof.2}
	leads to
	\begin{align*}
	\Vert \bu-\Pi_\disc u_\disc\Vert_{}\lesssim& \big(\norm{\hbd \P_\disc \bu-\hb \bu}{}+\WS_\disc^B(\bu)\big)\big(\norm{\hbd \P_\disc \varphi_g-\hb \varphi_g}{}+
	\WS_\disc^B(\varphi_g)\big)\\
	&+\norm{\bu - \Pi_\disc \P_\disc \bu}{}+\norm{f}{}\norm{\varphi_g - \Pi_\disc \P_\disc \varphi_g}{}+
	\eaaD_\disc(\bu,\varphi_g,\P_\disc \bu,\P_\disc\varphi_g),
	\end{align*}
	where we have used the fact that $\norm{g}{}=1$. Finally, the proof is complete by using the definition \eqref{def.ED} of $\eaaD_\disc$ and noticing that $\hessian: A\hessian \bu=f\in L^2(\O)$ and $\hessian: A\hessian \varphi_g=g\in L^2(\O)$.
\end{proof}

\subsection{Proof of the improved $H^1$ estimate}\label{sec.proofH1}
\begin{proof}[Proof of Theorem \ref{th-h1-super}]
	A use of the triangle inequality leads to
	\be \label{h1.est}
	\norm{\nabla_\disc u_\disc -\nabla \bu}{}\le \norm{\nabla_\disc u_\disc -\nabla_\disc \P_\disc \bu }{}+\norm{\nabla_\disc \P_\disc \bu -\nabla \bu}{}.
	\ee
	Let us estimate $\norm{\nabla_\disc u_\disc -\nabla_\disc \P_\disc \bu}{}$. Set $v_\disc = u_\disc - \P_\disc \bu \in X_{\disc,0}$. Introduce $\nabla E_\disc v_\disc$ and $\hb\bu$, and use triangle inequalities, \eqref{sup.ED.est} and \eqref{err.est.PDE} to obtain
	\begin{align}\label{h1.est1}
	\norm{\nabla_\disc v_\disc }{}&\le  \norm{\nabla_\disc v_\disc-\nabla E_\disc v_\disc}{}+\norm{\nabla E_\disc v_\disc}{}\le \omega(E_\disc) \norm{\hbd v_\disc}{}+\norm{\nabla E_\disc v_\disc}{}\nonumber\\
	&\le  \omega(E_\disc)\big( \norm{\hbd u_\disc - \hb\bu}{}+\norm{\hb\bu - \hbd\P_\disc \bu }{}\big)+\norm{\nabla E_\disc v_\disc}{}\nonumber\\
	&\lesssim  \omega(E_\disc)\big(\WS_\disc^B(\bu)+\norm{\hb\bu - \hbd\P_\disc \bu }{}\big)+\norm{\nabla E_\disc v_\disc}{}.
	\end{align}
	Consider $\norm{\nabla E_\disc v_\disc}{}$. From \eqref{weak_adjoint_h1} with $w=E_\disc v_\disc$, 
	\begin{align}
	\norm{\nabla E_\disc v_\disc}{}=a(E_\disc v_\disc,\varphi_q)&=\int_{\O}(\hb E_\disc v_\disc -\hbd v_\disc): \hb \varphi_q \d\x \nonumber\\
	&\qquad+\int_{\O}\hbd v_\disc: \hb \varphi_q \d\x=:T_1 +T_2.\label{t1t2.est.h1}
	\end{align}
	An integration by parts and a use of \eqref{def.WDtilde.nr}, \eqref{def.WDtilde}, the Cauchy--Schwarz inequality, \eqref{sup.ED.est}, the triangle inequality and \eqref{err.est.PDE} yield
	\begin{align}
	|T_1|
	&\le\int_{\O} |\div(A\hessian \varphi_q)\cdot(\nabla_\disc v_\disc-\nabla E_\disc v_\disc)|\d\x+{\widetilde{W}}_\disc^B(\hessian \varphi_q)\norm{\hbd v_\disc}{}\nonumber\\
	&\le\omega(E_\disc)\norm{\hbd v_\disc}{}\norm{ \div(A\hessian \varphi_q)}{}+{\widetilde{W}}_\disc^B(\hessian \varphi_q)\norm{\hbd v_\disc}{}\nonumber\\
	&\lesssim\big(\omega(E_\disc)\norm{ \div(A\hessian \varphi_q)}{}+{\widetilde{W}}_\disc^B(\hessian \varphi_q)\big)\big(\WS_\disc^B(\bu)+\norm{\hb\bu - \hbd\P_\disc \bu }{}\big).\label{t1.est.h1}
	\end{align}
 Simple manipulations leads to
	\begin{align}
	T_2
	&=\int_\O (\hb \bu-\hbd \P_\disc \bu) : \hb \varphi_q \d\x+ \int_\O (\hbd u_\disc-\hb \bu ) : (\hb \varphi_q -\hbd\P_\disc \varphi_q) \d\x \nonumber\\
	&\qquad +\int_\O  (\hbd u_\disc -\hb \bu ) :\hbd \P_\disc \varphi_q\d\x=: T_{2,1} + T_{2,2}+ T_{2,3}.\label{t21-t23.est.h1}
	\end{align}
Integration by parts, \eqref{def.WDtilde.nr} and the Cauchy--Schwarz inequality imply that
	\be
	|T_{2,1}|\le \norm{\div (A\hessian \varphi_q)}{}\norm{\nabla_\disc\P_\disc \bu-\nabla\bu}{}+|\widetilde{\mathcal{W}}_\disc^B(\hessian \varphi_q, \P_\disc \bu)|.\label{t21.est.h1}
	\ee
	Apply Cauchy--Schwarz inequality and \eqref{err.est.PDE} to obtain
	\be
	|T_{2,2}|\le \norm{\hb \bu - \hbd u_\disc}{}\norm{\hb \varphi_q -\hbd\P_\disc \varphi_q}{}\lesssim \WS_\disc^B(\bu)\norm{\hb \varphi_q -\hbd\P_\disc \varphi_q}{}.\label{t22.est.h1}
	\ee
	Since $\hessian:A\hessian\bu=f$, by \eqref{def:W.nr} and \eqref{base.HS} with $v_\disc=\P_\disc \varphi_q$, the term $T_{2,3}$ can be estimated as
	\begin{align}
	T_{2,3}
	&\le-\int_{\O}(\hessian:A\hessian \bu)\Pi_\disc \P_\disc \varphi_q\d\x +\mathcal{W}_\disc^B(\hessian \bu,\P_\disc \varphi_q )+\int_\O  \hbd u_\disc :\hbd \P_\disc \varphi_q\d\x\nonumber\\
	&=-\int_{\O}(\hessian:A\hessian \bu)\Pi_\disc \P_\disc \varphi_q\d\x +\mathcal{W}_\disc^B(\hessian \bu,\P_\disc \varphi_q )+\int_{\O}f\Pi_\disc \P_\disc \varphi_q\d\x\nonumber\\
	&=\mathcal{W}_\disc^B(\hessian \bu,\P_\disc \varphi_q). \label{t23.est.h1}
	\end{align}
	A substitution of \eqref{t21.est.h1}--\eqref{t23.est.h1} in \eqref{t21-t23.est.h1} yields
	\begin{align}
	|T_2|&\lesssim \norm{\div (A\hessian \varphi_q)}{}\norm{\nabla\bu - \nabla_\disc\P_\disc \bu}{}+|\widetilde{\mathcal{W}}_\disc^B(\hessian \varphi_q, \P_\disc \bu)|\nonumber\\
	&\qquad +\WS_\disc^B(\bu)\norm{\hb \varphi_q -\hbd\P_\disc \varphi_q}{}+|\mathcal{W}_\disc^B(\hessian \bu,\P_\disc \varphi_q )|.
	\label{t2.est.h1}
	\end{align}
	Plug \eqref{t1.est.h1} and \eqref{t2.est.h1} in  \eqref{t1t2.est.h1} to obtain an estimate for $\norm{\nabla E_\disc v_\disc}{}$ as
	\begin{align}
	\norm{\nabla E_\disc v_\disc}{} &\lesssim \big(\omega(E_\disc)\norm{ \div(A\hessian \varphi_q)}{}+{\widetilde{W}}_\disc^B(\hessian \varphi_q)\big)\big(\WS_\disc^B(\bu)+\norm{\hb\bu - \hbd\P_\disc \bu }{}\big)\nonumber\\
	&\quad+ \norm{\div (A\hessian \varphi_q)}{}\norm{\nabla\bu - \nabla_\disc\P_\disc \bu}{}+|\widetilde{\mathcal{W}}_\disc^B(\hessian \varphi_q, \P_\disc \bu)|\nonumber\\
	&\qquad +\WS_\disc^B(\bu)\norm{\hb \varphi_q -\hbd\P_\disc \varphi_q}{}+|\mathcal{W}_\disc^B(\hessian \bu,\P_\disc \varphi_q )|.
	\end{align}
	A use of the apriori bound for the dual problem $\norm{\varphi_q}{H^3(\O)}\lesssim 1 $ yields
	\begin{align}
	\norm{\nabla E_\disc v_\disc}{} \lesssim &\big(\omega(E_\disc)+{\widetilde{W}}_\disc^B(\hessian \varphi_q)\big)\big(\WS_\disc^B(\bu)+\norm{\hb\bu - \hbd\P_\disc \bu }{}\big)\nonumber\\
	&+ \norm{\nabla\bu - \nabla_\disc\P_\disc \bu}{}+|\widetilde{\mathcal{W}}_\disc^B(\hessian \varphi_q, \P_\disc \bu)|\nonumber\\
	&+\WS_\disc^B(\bu)\norm{\hb \varphi_q -\hbd\P_\disc \varphi_q}{}+|\mathcal{W}_\disc^B(\hessian \bu,\P_\disc \varphi_q )|.\label{ed.est}
	\end{align}
	A substitution of \eqref{ed.est} into \eqref{h1.est1} leads to an estimate on $\norm{\nabla_\disc v_\disc}{}$ (with $v_\disc = u_\disc - \P_\disc \bu \in X_{\disc,0}$) which when plugged on \eqref{h1.est} gives
	\begin{align*}
	\norm{\nabla_\disc u_\disc -\nabla \bu }{}&\lesssim \big(\omega(E_\disc)+{\widetilde{W}}_\disc^B(\hessian \varphi_q)\big)\big(\WS_\disc^B(\bu)+\norm{\hb\bu - \hbd\P_\disc \bu }{}\big)\nonumber\\
	&\quad+ \norm{\nabla\bu - \nabla_\disc\P_\disc \bu}{}+|\widetilde{\mathcal{W}}_\disc^B(\hessian \varphi_q, \P_\disc \bu)|\nonumber\\
	&\qquad +\WS_\disc^B(\bu)\norm{\hb \varphi_q -\hbd\P_\disc \varphi_q}{}+|\mathcal{W}_\disc^B(\hessian \bu,\P_\disc \varphi_q )|
	\end{align*}
	and this completes the proof.
\end{proof}	
	\subsection{Proof of the HDM properties for the Morley element}\label{sec.proofMorley}
	\begin{proof}[Proof of Theorem \ref{th.Morley}]
		Let $\disc=(X_{\disc,0},\Pi_\disc,\nabla_\disc,\hbd)$ be a $B\textendash$Hessian discretisation for the Morley ncFEM in the sense of Definition \ref{def.Morley}. In the sequel, we will use a generic constant $C$, which will take different values at different places but will always be independent of the mesh size $h$.

		$\bullet$ \textbf{Coercivity: }	Let $v_\disc \in X_{\disc,0}$. Since $\llbracket \Pi_\disc v_\disc \rrbracket$ = 0 at the face vertices for any $v_\disc \in X_{\disc,0}$ and $\llbracket \nabla_\disc v_\disc \rrbracket=0$ at the edge midpoints, use Lemma \ref{jump_bb_gradient} twice and the coercivity property of $B$ given by \eqref{coer:B} to obtain
		\be \nonumber
		\norm{\Pi_\disc v_\disc}{} \le C \norm{\nabla_{\disc}v_\disc}{} \le C \norm{\hessian_{\disc}v_\disc}{}\le C\varrho^{-1}\norm{\hbd v_\disc}{}.
		\ee
		This with \eqref{def.CD} concludes the estimate on $C_\disc^B$.
		
		$\bullet$ \textbf{Consistency:} Consistency follows from the interpolation property of the family of Morley element \cite[Chapter 6]{ciarlet1978finite}. For $\varphi \in H^3(\O)\cap H^2_0(\O)$,
		\begin{align*}
		\inf_{w_\disc\in X_{\disc,0}} \norm{\Pi_\disc w_\disc- \varphi}{} & \le Ch^3\lVert \varphi \rVert_{H^3(\O)}, \quad \inf_{w_\disc\in X_{\disc,0}} \norm{\nabla_\disc w_\disc -\nabla \varphi }{} \le Ch^2\lVert \varphi \rVert_{H^3(\O)}, \\
		\inf_{w_\disc\in X_{\disc,0}} \norm{\hbd& w_\disc-\hessian^B \varphi}{} \le Ch\lVert \varphi \rVert_{H^3(\O)}.
		\end{align*}
		Therefore, we obtain $S_\disc^B(\varphi)\le C h \lVert \phi \rVert_{H^3(\O)}$.
		\smallskip
		
		$\bullet$ \textbf{Limit--conformity:} For any $\xi \in H^2({\O})^{d \times d}$ and $v_\disc \in X_{\disc,0}$, cellwise integration by parts yields
		\begin{align*}
		\int_{\O}^{}(\hessian:A\xi)\Pi_\disc v_\disc \d\x&=\sum_{\cell \in \mesh}^{} \int_{\cell}^{}(\hessian:A\xi)\Pi_\disc v_\disc \d\x\\
		&=\int_{\O}^{}A\xi:\hessian_\disc v_\disc \d\x-\sum_{\cell \in \mesh}^{}
		\int_{\partial \cell}^{}(A\xi n_\cell)\cdot\nabla_\disc v_\disc \d s(\x) \\
		& \qquad + \sum_{\cell \in \mesh}^{} \int_{\partial \cell}^{}(\div(A\xi)\cdot n_\cell)\Pi_\disc v_\disc \d s(\x).
		\end{align*}
This gives
		\begin{align}
		\int_{\O}^{}(\hessian:A\xi)\Pi_\disc v_\disc \d\x	-\int_{\O}^{}A\xi:&\hessian_\disc v_\disc \d\x
		=-\sum_{\cell \in \mesh}^{}\sum_{\edge \in \edgescv}^{} 
		\int_{\edge}^{}(A\xi n_{\cell,\edge})\cdot \nabla_\disc v_\disc  \d s(\x)\nonumber\\+ &\sum_{\cell \in \mesh}^{}\sum_{\edge \in \edgescv}^{} \int_{\edge}^{}(\div(A\xi) \cdot n_{\cell,\edge})\Pi_\disc v_\disc \d s(\x).\label{limit_conformity1}
		\end{align}
An appropriate modification to the proof of \cite[Lemma 3.5]{PLPL75} yields
		\be \nonumber 
		\bigg| \int_{\O}^{}(\hessian:A\xi)\Pi_\disc v_\disc \d\x-\int_{\O}^{}A\xi:\hessian_\disc v_\disc \d\x\bigg|\le C\varrho^{-1}h\norm{\hessian_\disc^B {v_\disc}}{}\norm{\xi}{H^2({\O})^{d \times d}}
		\ee 
		and this leads to the desired estimate on $W_\disc^B$. 
	\end{proof}

	\section{Appendix}\label{appendix}
	In this section, the proofs of Propositions \ref{ncfem:supercv}, \ref{prop.FVM} and \ref{ncfem:supercv.h1} are presented. This is followed by some technical results.
	\subsection{Proof of the applications of improved $L^2$ error estimate}
We start by a preliminary result that states the approximation properties of the classical interpolant $\P_\disc$ for various methods.
\begin{lemma}[Interpolation \cite{ciarlet1978finite}, \cite{HDM_linear}]\label{lemma.interpolant}
	Let $\psi\in H^3(\O)\cap H^2_0(\O)$ and $\phi\in H^4(\O)\cap H^2_0(\O)$. The classical interpolant satisfies 
	\item[(i)] For conforming FEMs and Morley ncFEM, 
	 \be \nonumber
	 \norm{\Pi_\disc\P_\disc\psi-\psi}{}\le Ch^3, \norm{\nabla_\disc\P_\disc\psi-\nabla\psi}{}\le Ch^2 \mbox{ and }\norm{\hbd\P_\disc\psi-\hb\psi}{}\le Ch.
	 \ee 
	 \item[(ii)]For Adini ncFEM,
	 \be \nonumber 
	 \norm{\Pi_\disc\P_\disc\phi-\phi}{}\le Ch^4, \norm{\nabla_\disc\P_\disc\phi-\nabla\phi}{}\le Ch^3 \mbox{ and }\norm{\hbd\P_\disc\phi-\hb\phi}{}\le Ch^2.
	 \ee
	 \item[(iii)] For the methods based on gradient recovery operator,
	 	$$\norm{\Pi_\disc\P_\disc\psi-\psi}{}\le Ch^2, \norm{\nabla_\disc\P_\disc\psi-\nabla\psi}{}\le Ch^2 \mbox{ and }\norm{\hbd\P_\disc\psi-\hb\psi}{}\le Ch. $$
\end{lemma}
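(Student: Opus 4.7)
The plan is to let $\P_\disc$ be, in each case, the canonical nodal interpolant associated to the scheme, and then reduce the three bounds in (i)--(iii) to standard finite element interpolation estimates obtained by a Bramble--Hilbert/Deny--Lions argument on a reference cell followed by affine (or multilinear) scaling to the physical cell.

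First, for conforming FEMs (Argyris, BFS) the reconstructions are the identity on $X_{\disc,0}\subset H^2_0(\O)$, so the three norms in the statement are literally $\|\P_\disc\psi-\psi\|$, $\|\nabla(\P_\disc\psi-\psi)\|$ and $\|\hb(\P_\disc\psi-\psi)\|$. Since the shape-function space on each cell contains $\mathbb{P}_3$ and the degrees of freedom fix all derivatives of order $\leq 2$ needed for unisolvence, Bramble--Hilbert gives the rates $h^{3-k}$ in the broken $H^k$-seminorm for $k=0,1,2$ under $\psi\in H^3$; summing over the mesh under the regularity assumption on $\mesh$ produces (i). For the Morley ncFEM we use the same cellwise template: the nodal values at vertices and mean normal derivatives on edges make sense for $\psi\in H^3$ by standard trace inequalities, $\mathbb{P}_2\subset \mathbb{P}_\cell$, and the broken reconstructions $\Pi_\disc,\nabla_\disc,\hbd$ are exactly the cellwise polynomial and its broken derivatives, so the cellwise Bramble--Hilbert estimate again gives the orders $h^3,h^2,h$.

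For the Adini rectangle (ii) one works on the reference unit rectangle. The shape-function space $\mathbb{P}_3\oplus\{x_1x_2^3,x_1^3x_2\}$ contains $\mathbb{P}_3$, so Bramble--Hilbert applied to $\phi\in H^4$ supplies bounds of order $h^{4-k}$ in the broken $H^k$-seminorm for $k=0,1,2$; the key point is the inclusion of polynomials of total degree $3$ together with the symmetric tensor-product structure of the degrees of freedom, and everything else is the same affine-scaling routine. For the gradient recovery method (iii) we pick $\P_\disc$ to be the $\mathbb{P}_1$ Lagrange interpolant, giving directly $\|\Pi_\disc\P_\disc\psi-\psi\|\lesssim h^2\|\psi\|_{H^2}$. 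To bound $\nabla_\disc\P_\disc\psi-\nabla\psi=Q_h\nabla\P_\disc\psi-\nabla\psi$, insert $Q_h\nabla\psi$, use stability of $Q_h$ with the $\mathbb{P}_1$ estimate $\|\nabla(\P_\disc\psi-\psi)\|\lesssim h^2\|\psi\|_{H^3}$ (which is already of order $h^2$ when $\psi\in H^3$), then the approximation property of $Q_h$ on $\nabla\psi\in H^2$. For the Hessian term, write $\hbd\P_\disc\psi-\hb\psi$ as $B[\nabla(Q_h\nabla\P_\disc\psi)-\hessian\psi] + B\stab_h\otimes(Q_h\nabla\P_\disc\psi-\nabla\P_\disc\psi)$ and use the approximation estimate $\|\nabla Q_h\nabla\psi-\hessian\psi\|\lesssim h\|\psi\|_{H^3}$ together with the design property of $\stab_h\in L^\infty$ that makes the stabilisation contribution $\mathcal{O}(h)$.

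The main technical obstacle is (iii), specifically the Hessian bound, because the reconstructed Hessian involves the composite $\nabla\circ Q_h\circ\nabla$ acting on a low-order interpolant, and one must invoke the approximation/commutation and stability properties of $Q_h$ established via the biorthogonal system, together with the prescribed behaviour of the stabiliser $\stab_h$. All other parts are routine applications of Bramble--Hilbert on the reference element, and the Adini case simply exploits that the shape-function space contains $\mathbb{P}_3$ to pick up one additional power of $h$ when the solution is in $H^4$. Accordingly, the proof is a case-by-case reference to the classical estimates collected in \cite{ciarlet1978finite} and, for the gradient recovery component, the analysis carried out in \cite{HDM_linear}.
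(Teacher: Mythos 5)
The paper does not actually prove this lemma: it is stated as a collection of known interpolation results with pointers to \cite{ciarlet1978finite} and \cite{HDM_linear}, so the only fair comparison is with the arguments in those references. For parts (i) and (ii) your route (canonical nodal interpolant, Bramble--Hilbert on the reference cell, scaling, summation under mesh regularity) is exactly the standard one and is fine, modulo the usual technicality that the Argyris/BFS nodal interpolants involve point values of second derivatives and hence are not literally defined on $H^3(\O)$ in $d=2$; this is glossed over in the paper as well and is repaired by an averaged interpolant, so I would not count it against you.

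There is, however, a genuine gap in your argument for part (iii), specifically the bound $\norm{\nabla_\disc\P_\disc\psi-\nabla\psi}{}\le Ch^2$. You propose to insert $Q_h\nabla\psi$ and control $Q_h(\nabla\P_\disc\psi-\nabla\psi)$ by the $L^2$-stability of $Q_h$ together with the estimate $\norm{\nabla(\P_\disc\psi-\psi)}{}\lesssim h^2\norm{\psi}{H^3(\O)}$. That interpolation estimate is false for the $\mathbb{P}_1$ Lagrange interpolant: $\nabla\P_\disc\psi$ is piecewise constant, so $\norm{\nabla\P_\disc\psi-\nabla\psi}{}$ is bounded below by the distance of $\nabla\psi$ to piecewise constants, which is generically of order $h$ only. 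Consequently, stability of $Q_h$ gives merely $\mathcal{O}(h)$ for this term, not $\mathcal{O}(h^2)$. The second-order gradient bound for the GR method is a supercloseness statement: one must show directly that $\norm{Q_h(\nabla\P_\disc\psi-\nabla\psi)}{}=\mathcal{O}(h^2)$, which relies on the local averaging structure of the recovery operator built from the biorthogonal system (the oscillating leading-order part of the $\mathbb{P}_1$ interpolation error is annihilated by the weighted averages defining $Q_h$). This is precisely the nontrivial property established in \cite{HDM_linear} and the Lamichhane references, and it cannot be replaced by stability plus a naive interpolation bound. Your treatment of the reconstructed Hessian in (iii) is directionally correct but inherits the same dependence on these recovery-operator properties, so it too must be routed through the cited analysis rather than through generic Bramble--Hilbert estimates.
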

The next lemma establishes an estimate on the limit--conformity measure $\mathcal{W}_\disc^B$ given by \eqref{def:W.nr} for various schemes.
\begin{lemma}\label{lemma.limitconformity}
		Let $\xi \in H^2(\O)^{d \times d}$, $\psi\in H^3(\O)\cap H^2_0(\O)$ and $\phi\in H^4(\O)\cap H^2_0(\O)$.
			\item[(i)] For conforming FEMs, we have $\mathcal{W}_\disc^B(\xi,\P_\disc\psi)=0$.
			\item[(ii)] For Adini ncFEM, $\mathcal{W}_\disc^B(\xi,\P_\disc\phi)=\mathcal{O}(h^2)$.
		\item[(iii)] For Morley ncFEM and gradient recovery methods, $\mathcal{W}_\disc^B(\xi,\P_\disc\psi)=\mathcal{O}(h^2)$.	
\end{lemma}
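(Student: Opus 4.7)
The overall strategy is to unfold $\mathcal{W}_\disc^B(\xi,\P_\disc\psi)$ via cellwise integration by parts, expressing it as a sum of boundary/jump integrals on the mesh skeleton, and then to exploit the specific conformity properties of each interpolant $\P_\disc$ to extract the claimed order. Concretely, applying integration by parts twice inside each $K\in\mesh$ to $\int_K B\xi:\hbd\P_\disc\psi\,\d\x = \int_K A\xi:\hessian(\Pi_\disc\P_\disc\psi)\,\d\x$ and summing over $K$ yields
$$
\mathcal{W}_\disc^B(\xi,\P_\disc\psi) = \sum_{K\in\mesh}\int_{\partial K}\bigl[(\div(A\xi)\cdot n_K)\,\Pi_\disc\P_\disc\psi - (A\xi\, n_K)\cdot\nabla(\Pi_\disc\P_\disc\psi)\bigr]\d s,
$$
which I then reorganize as a sum over $\sigma\in\edges$ of jump contributions across interior edges and boundary traces.

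For case (i), the conforming FEM satisfies $X_{\disc,0}\subset H^2_0(\O)$ with $\Pi_\disc=\mathrm{Id}$ and $\hbd=\hb$; hence all interior jumps vanish and boundary traces vanish, so $\mathcal{W}_\disc^B(\xi,\P_\disc\psi)=0$ immediately. This is just the two-step global integration by parts for an $H^2_0$-conforming function.

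For cases (ii) and (iii) on Morley and Adini, I exploit the interpolation properties recalled in the discussion of these elements: for Morley, $\Pi_\disc\P_\disc\psi$ is continuous at interior vertices and the jump of its normal derivative has zero mean on each edge; for Adini, both function and gradient are continuous at vertices. These conformity conditions are orthogonality relations on the jumps, so I may freely subtract from $A\xi\, n_\sigma$ and from $\div(A\xi)\cdot n_\sigma$ appropriately chosen polynomial means on each edge (or cellwise $\mathbb{P}_0$ / $\mathbb{P}_1$ projections) without altering the value of $\mathcal{W}_\disc^B$. A scaled trace inequality together with standard Bramble–Hilbert approximation on these $L^2$ projections of $A\xi$ (using $\xi\in H^2(\O)^{d\times d}$) produces one extra factor of $h$ beyond the naïve energy-consistency estimate of Theorem \ref{th.Morley}, giving $\mathcal{O}(h^2)$. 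The additional regularity $\phi\in H^4\cap H^2_0$ required in the Adini case enters when bounding $\|\nabla(\Pi_\disc\P_\disc\phi - \phi)\|$-type residuals on edges via the interpolation estimates of Lemma \ref{lemma.interpolant}(ii).

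For the gradient-recovery part of (iii), I use that $\Pi_\disc w = w$ and $\nabla_\disc w = Q_h\nabla w$, and that $\hbd w = B[\nabla(Q_h\nabla w) + \stab_h\otimes(Q_h\nabla w - \nabla w)]$. Integrating by parts (once) the term $\int A\xi:\nabla(Q_h\nabla \P_\disc\psi)\,\d\x$ and invoking the superapproximation of $Q_h$, namely $\|Q_h\nabla(\P_\disc\psi) - \nabla\psi\|=\mathcal{O}(h^2)$ for $\psi\in H^3\cap H^2_0$ via the biorthogonal-system bound of \cite{HDM_linear}, reduces the problem to products that are each $\mathcal{O}(h^2)$; the stabiliser term is handled by noting that $\stab_h$ is $L^\infty$-bounded and $Q_h\nabla\P_\disc\psi - \nabla\P_\disc\psi$ is $\mathcal{O}(h^2)$ by the same superapproximation. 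The main obstacle throughout is the Morley/Adini analysis: one must match the right projection on each edge to the available orthogonality (zero-mean normal-derivative jump vs.\ vertex continuity) so that the cancellation triggers; for gradient recovery, the delicate point is propagating the $\mathcal{O}(h^2)$ accuracy of $Q_h$ through the stabiliser without losing a factor of $h$.
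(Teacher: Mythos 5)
Your treatment of the gradient-recovery stabiliser term contains a genuine error. You claim that $Q_h\nabla \P_\disc\psi-\nabla\P_\disc\psi$ is $\mathcal{O}(h^2)$ "by the same superapproximation", but the superapproximation property of $Q_h$ gives $\norm{Q_h\nabla\P_\disc\psi-\nabla\psi}{}=\mathcal O(h^2)$, i.e. accuracy relative to the \emph{exact} gradient; the quantity you actually need to bound compares $Q_h\nabla\P_\disc\psi$ with the broken gradient $\nabla\P_\disc\psi$ of the piecewise \emph{linear} interpolant, which approximates $\nabla\psi$ only to order $h$. Hence $\norm{Q_h\nabla\P_\disc\psi-\nabla\P_\disc\psi}{}\le \norm{Q_h\nabla\P_\disc\psi-\nabla\psi}{}+\norm{\nabla\psi-\nabla\P_\disc\psi}{}=\mathcal O(h)$ and no better, so your argument only yields $\mathcal O(h)$ for the stabiliser contribution. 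The missing ingredient is the design orthogonality of $\stab_h$: on each cell $\cell$ one has $\left[\stab_{h|\cell}\otimes(Q_h\nabla-\nabla)(V_h(\cell))\right]\perp\nabla V_h(\cell)^d$ in $L^2(\cell)^{d\times d}$, and since $\nabla V_h(\cell)^d$ contains the constants one may replace $A\xi$ by $A\xi-A\xi_\cell$ (cellwise mean); the factor $\norm{\xi-\xi_\cell}{L^2(\cell)^{d\times d}}\le Ch\norm{\xi}{H^1(\cell)^{d\times d}}$ then supplies the extra power of $h$ that turns the $\mathcal O(h)$ bound into $\mathcal O(h^2)$. Without invoking this orthogonality your proof of the GR case of (iii) does not close.

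The rest of your plan is workable but takes a different, and heavier, route than the paper. You unfold $\mathcal{W}_\disc^B$ into edge-jump integrals and exploit the Morley/Adini interelement continuity conditions; the paper instead first replaces $\Pi_\disc\P_\disc\psi$ by $\psi$ (paying $\norm{\hessian:A\xi}{}\,\norm{\Pi_\disc\P_\disc\psi-\psi}{}$, which is higher order by Lemma \ref{lemma.interpolant}), integrates by parts \emph{globally} on the smooth $H^2_0$ function, and reduces everything to the volume term $\int_\O B\xi:(\hb\psi-\hbd\P_\disc\psi)\d\x$. For Adini this is finished by plain Cauchy--Schwarz since $\norm{\hbd\P_\disc\phi-\hb\phi}{}=\mathcal O(h^2)$ for $\phi\in H^4$ — no jump cancellation is needed at all, which is why the $H^4$ regularity appears there. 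For Morley, where the Hessian interpolation error is only $\mathcal O(h)$, the paper subtracts the cellwise mean $B\xi_\cell$ and uses the identity $\int_\cell\hbd\P_\disc\psi\d\x=\int_\cell\hb\psi\d\x$ satisfied by the Morley interpolant, rather than the zero-mean normal-derivative jump condition on edges that you propose. Your edge-based variant for Morley can be made to work (the function jump is controlled via traces of $\Pi_\disc\P_\disc\psi-\psi$, not by a mean-zero property, since the jump of a Morley function vanishes only at the edge endpoints), but you would need to carry out those trace estimates explicitly; as stated, "vertex continuity is an orthogonality relation on the jumps" is not literally true and should be replaced by the trace-plus-interpolation argument.
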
		
\begin{proof}
$(i)\textsc{ Conforming FEMs. }$Since $X_{\disc,0} \subseteq H^2_0(\O)$, using integration by parts twice, the limit-conformity measure vanishes, that is, $\mathcal{W}_\disc^B=0$.

		\smallskip
		
		 	$(ii)$ \textsc{Nonconforming FEM: the Adini rectangle. }Let $\phi\in H^4(\O)\cap H^2_0(\O)$ and $\xi \in H^2(\O)^{d \times d}$. Introduce the term $(\hessian:A\xi)\phi$ in \eqref{def:W.nr}, use the Cauchy--Schwarz inequality and Lemma \ref{lemma.interpolant} to obtain
		 	\begin{align*}
		 	\big|\mathcal{W}_\disc^B(\xi,\P_\disc\phi)\big|
		 	&\le\bigg| \int_\O \big((\hessian:A\xi)\Pi_\disc \P_\disc \phi -(\hessian:A\xi)\phi\big)\d\x\bigg|\\
		 	&\qquad \qquad+ \bigg|\int_{\O}\big((\hessian:A\xi)\phi-B\xi:\hbd \P_\disc \phi \big)\d\x\bigg|\\
		 		&\le \norm{\hessian:A\xi}{}\norm{\Pi_\disc \P_\disc \phi-\phi}{}+\bigg|\int_{\O}\big((\hessian:A\xi)\phi-B\xi:\hbd \P_\disc \phi \big)\d\x\bigg|\\
		 	&\le Ch^4+ \bigg|\int_{\O}\big((\hessian:A\xi)\phi-B\xi:\hbd \P_\disc \phi \big)\d\x\bigg|.
		 	\end{align*}
		 	Apply integration by parts twice to deduce
		 	\begin{align} \label{est.WDtilda1}
		 	\big|\mathcal{W}_\disc^B(\xi,P_\disc\phi)\big|&\le Ch^4+\bigg|\int_{\O}\big(B\xi:\hb \phi-B\xi:\hbd \P_\disc \phi \big)\d\x\bigg|.
		 	\end{align}
	A use of the Cauchy--Schwarz inequality and Lemma \ref{lemma.interpolant} leads to
	 	 \begin{align} 
	 	 \big|\mathcal{W}_\disc^B(\xi,P_\disc\phi)\big|
	 	 &\le Ch^4 + \norm{B\xi}{}\norm{\hbd\P_\disc\phi-\hb\phi}{}\le Ch^2.\nonumber
	 	 \end{align}	
		 	\smallskip
		 	
	$(iii)(a)$ \textsc{Nonconforming FEM: the Morley triangle. }Let $\psi\in H^3(\O)\cap H^2_0(\O)$ and $\xi \in H^2(\O)^{d \times d}$. Proceeding as in the proof of limit conformity $\mathcal{W}_\disc^B(\xi,\P_\disc\psi)$ for the Adini's rectangle (with $\norm{\Pi_\disc\P_\disc\psi-\psi}{}\le Ch^3$), from \eqref{est.WDtilda1}, we arrive at
		 	\begin{align}\label{Morley.Pd1}
		 \mathcal{W}_\disc^B(\xi,\P_\disc\psi)&\le Ch^3 + \bigg|\int_{\O}\big(B\xi:\hb \psi-B\xi:\hbd \P_\disc \psi \big)\d\x\bigg|.
		 	\end{align}
	Let $\xi_\cell$ be the average value of $\xi$ on the cell $\cell \in \mesh$.  By the mesh regularity assumption, $\norm{\xi-\xi_\cell}{L^2(\cell)^{d \times d}}\le Ch\norm{\xi}{H^1(\cell)^{d \times d}}$ (see, e.g., \cite[Lemma B.6]{gdm}). An introduction of $B\xi_\cell$ in the above inequality and a use of the Cauchy--Schwarz inequality and Lemma \ref{lemma.interpolant} yield
			 	\begin{align}
			 	\big|\mathcal{W}_\disc^B(\xi,\P_\disc\psi)\big|
			& \le Ch^3+	\sum_{\cell \in \mesh}^{}\norm{B\xi-B\xi_\cell}{L^2(\cell)^{d \times d}}\norm{\hbd \psi-\hbd \P_\disc \psi}{L^2(\cell)^{d \times d}}\nonumber\\
			&\qquad \qquad +\bigg|\sum_{\cell \in \mesh}^{}\int_{\cell}B\xi_\cell:(\hb\psi-\hbd \P_\disc \psi)\d\x\bigg|\nonumber\\
			 	&\le Ch^2+\bigg|\sum_{\cell \in \mesh}^{}\int_{\cell}B\xi_\cell:(\hb\psi-\hbd \P_\disc \psi) \d\x\bigg|.\nonumber 
			 	\end{align}	 
	For $\cell \in \mesh$, we have \cite{Gallistl_Morley}
	 \be \label{Morley.Pd}
	  \int_{\cell}\hbd \P_\disc \psi\d\x=\int_{\cell}\hb \psi\d\x.
	 \ee	 		
	 Hence, $\mathcal{W}_\disc^B(\xi,\P_\disc\psi)=\mathcal{O}(h^2)$. 
		 	
		 	\smallskip
		 	
	$(iii)(b)$ \textsc{Gradient recovery method. }
	Note that for the GR method, $\Pi_\disc\P_\disc\psi=\P_\disc\psi \in V_h$, an $H^1_0$-conforming finite element space which contains the piecewise linear functions, and $\norm{\nabla \P_\disc\psi-\nabla\psi}{}\le Ch$. Let us consider $\mathcal{W}_\disc^B(\xi,\P_\disc\psi)$. Reproducing the same steps as in the proof for Adini's rectangle (with $\norm{\Pi_\disc\P_\disc\psi-\psi}{}\le Ch^2$), from \eqref{est.WDtilda1} and the definition of reconstructed Hessian $\hbd$ (see Section \ref{sec.HDMeg}), we obtain 
		 	\begin{align*}
		 	\big|\mathcal{W}_\disc^B(\xi,\P_\disc\psi)\big|
		 	&\le Ch^2+\bigg|\int_{\O}\Big(A\xi:\hessian\psi-A\xi:\nabla Q_h \nabla \P_\disc \psi\Big)\d\x\bigg|\\
		 	&\qquad +\bigg|\int_{\O}A\xi:(\stab_h\otimes (Q_h\nabla \P_\disc \psi-\nabla \P_\disc \psi) )\d\x\bigg|=: Ch^2+A_1 + A_2.
		 	\end{align*}
		 	Since $Q_h\nabla \P_\disc \psi \in H^1_0(\O)$, an integration by parts, the Cauchy--Schwarz inequality and the approximation property of $\P_\disc$ given by Lemma \ref{lemma.interpolant} show that
		 	\begin{align}
		 	|A_1|&=\Big\lvert-\int_\O\nabla \psi \cdot\div (A\xi)\d\x +\int_\O Q_h \nabla \P_\disc \psi\cdot\mbox{div}(A\xi)\d\x\Big\rvert \nonumber\\
		 	&\le\norm{Q_h \nabla \P_\disc \psi-\nabla \psi}{}\norm{\mbox{div}(A\xi)}{}=\norm{\nabla_\disc \P_\disc \psi-\nabla \psi}{}\norm{\mbox{div}(A\xi)}{}\le Ch^2.\nonumber
		 	\end{align}
		 	To estimate $A_2$, we shall make use of the orthogonality property of the stabilisation function. For all $\cell \in \mesh$, 
		 	denoting by $V_h(\cell)=\{v_{|\cell}\,:\,v \in V_h\,,\; \cell \in \mesh\}$ the local finite element space,
		 	$$\left[\stab_{h|K}\otimes (Q_h\nabla-\nabla)(V_h(\cell))\right] \perp \nabla V_h(\cell)^d$$ where the orthogonality is understood in $L^2(K)^{d\times d}$ with the inner product
		 	induced by ``$:$''. Let $\xi_\cell$ denote the average of $\xi$ over $\cell \in \mesh$. Since the finite dimensional space $V_h$ contains the piecewise linear functions, $\nabla V_h(K)$ contains
		 	the constant vector-valued functions on $K$ and thus, by the orthogonality condition, the Cauchy--Schwarz inequality, the boundedness of $\stab_h$, the triangle inequality and the approximation properties of the interpolant,
		 	\begin{align*}
		 	|A_2|
		 	={}&\Big\lvert\sum_{\cell \in \mesh}^{}\int_\cell(A\xi-A\xi_\cell):\stab_h\otimes (Q_h\nabla \P_\disc \psi-\nabla \P_\disc \psi)\d\x\Big\rvert\nonumber\\
		 	\le{}& C \sum_{\cell \in \mesh}^{}\norm{\xi-\xi_\cell}{L^2(\cell)^{d \times d}}\norm{Q_h\nabla \P_\disc \psi-\nabla \P_\disc \psi}{L^2(\cell)^d}\nonumber\\
		 	\le{}& Ch\norm{\nabla_\disc \P_\disc \psi-\nabla \P_\disc \psi}{}\nonumber\\
		 	\le{}& Ch\Big(\norm{\nabla_\disc \P_\disc \psi-\nabla \psi}{}+\norm{\nabla \psi-\nabla \P_\disc \psi}{}\Big)\le Ch^2.
		 	\end{align*}
		 	Therefore, we obtain
		 	$\mathcal{W}_\disc^B(\xi,\P_\disc\psi)=\mathcal O(h^2).$
		 \end{proof}
	\begin{proof}[Proof of Proposition \ref{ncfem:supercv}]
		The proof of Proposition \ref{ncfem:supercv} follows from Theorem \ref{th:error.est.PDE}, Remark \ref{rates.PDE}, Lemma \ref{lemma.interpolant} and Lemma \ref{lemma.limitconformity}.
		\end{proof}	 	
\begin{proof}[Proof of Proposition \ref{prop.FVM}]
    As a consequence of Stokes' formula, we have for $\cell \in \mesh,$ $\sum_{\edge\in\edgescv}|\edge|\:n_{\cell,\edge}=0$ (see the proof of \cite[Lemma B.3]{gdm}). A use of \eqref{vsigma} and the superadmissible mesh condition $n_{\cell,\edge}=\frac{\overline{\x}_\edge-\x_\cell}{d_{\cell,\edge}}$ leads to
	    \be \nonumber
	    \widetilde{\nabla}_\cell v_\disc=\frac{1}{|\cell|}\sum_{\edge\in\edgescv}|\edge|(v_\edge-v_\cell) \: n_{\cell,\edge}=\nabla_\cell v_\disc,
	    \ee
	    where $(\nabla_\disc v_\disc)_\cell=\nabla_\cell v_\disc$ as defined in Section \ref{FVM}. Hence,
	   $$\int_\cell \nabla_\disc v_\disc\d\x=\int_\cell \nabla_K v_\disc \d\x=|\cell|\widetilde{\nabla}_\cell v_\disc.$$
	  The definition of $D^*$, the above relation between $\widetilde{\nabla}_\cell$ and $\nabla_\disc,$ and \eqref{def.CD} imply
	   \be\nonumber 
	   \forall v_\disc\in X_{\disc,0}\,,\;\norm{\Pi_\disc v_\disc-\Pi_{\discs}v_\disc}{L^2(\O)}\lesssim
	   h\norm{\hbd v_\disc}{}.
	   \ee
	   Therefore, following the proof of \cite[Remark 7.51]{gdm}, we obtain the same estimates on $C_\discs^B$, $S_\discs^B$ and $W_\discs^B$ for $\discs$ as that for the original FVM HD $\disc$ given in Section \ref{FVM}. Thus, from Remark \ref{rates.PDE}, under regularity assumption, an $\mathcal{O}(h^{1/4}|\ln(h)|)$ (in $d=2$) or $\mathcal{O}(h^{3/13})$ (in $d=3$) error estimate can be obtained for the Hessian scheme based on modified FVM HD $\discs$. Note that to prove the error estimates for original FVM, the interpolation $\P_\disc$ is constructed by solving a TPFA scheme for second order problem, i.e, by considering
	  $ |\cell|\Delta_\cell \P_\disc \phi=\int_\cell\Delta \phi\d\x $ for $ \phi$  smooth enough and $\cell \in \mesh$. To preserve a superconvergence for this modified FVM, the idea is to construct $\P_\discs \phi$ by solving the modified TPFA scheme, where $\Pi_\disc$ is replaced by $\Pi_\discs.$ Since TPFA and Hybrid Mimetic Mixed (HMM) schemes are the same on superadmissible meshes, from \cite[Theorem 4.6]{jd_nn},
	  \be\label{interpolationFVM}
	  \norm{\Pi_{\discs} \P_\discs \phi-\phi}{}\lesssim h^2\norm{\phi}{H^2(\O)}.
	  \ee
	To estimate $\mathcal{W}_\discs^B(\xi,\P_\discs\phi)$, for $\phi \in H^4(\O)\cap H^2_0(\Omega)$ and $\xi \in H^2(\O)^{d \times d},$ consider \eqref{def:W.nr} with $\disc=\discs$. Introduce $(\hessian:A\xi)\phi$, use the Cauchy-Schwarz inequality, \eqref{interpolationFVM} and integration by parts twice to obtain
	\begin{align*}
\big|\mathcal{W}_\discs^B(\xi,\P_\discs\phi)\big|
&\le \bigg|\int_\O \Big((\hessian:A\xi)(\Pi_\discs \P_\discs\phi -\phi) \Big)\d\x\bigg|\\
&\qquad + \bigg|\int_\O \Big((\hessian:A\xi)\phi - B\xi:\hbd \P_\discs\phi \Big)\d\x\bigg|\\
&\le Ch^2 +\bigg|\int_\O B\xi:(\hb \phi-\hbd \P_\discs\phi) \d\x\bigg|.
	\end{align*}
The second term on the right-hand side of the above inequality can be estimated by considering the projection of  $B\xi$ on piecewise constant functions on the mesh $\mesh$. Let $B\xi_\cell$ be the projection of $B\xi$ on $\cell \in \mesh.$ Since $\Delta_\disc \P_\discs \phi$ is the projection of $\Delta \phi$ on piecewise constant functions on $\mesh$ (that is, $|\cell|\Delta_\cell \P_\discs \phi=\int_\cell\Delta \phi\d\x$), a use of the orthogonality property of the projection operator, the Cauchy-Schwarz inequality and the approximation property leads to
\begin{align}\nonumber
\big|\mathcal{W}_\discs^B(\xi,\P_\discs\phi)\big|&\le Ch^2 +\bigg|\sum_{\cell \in \mesh}^{}\int_\mesh (B\xi-B\xi_\cell):(\hb \phi-\hbd \P_\discs\phi) \d\x\bigg|\le Ch^2.
\end{align}
 A substitution of the above estimate, \eqref{interpolationFVM} and estimates given by Remark \ref{rates.PDE} in Theorem \ref{th-l2-super} with $\disc=\discs$ yields the desired estimate.
		 \end{proof}
		\subsection{Proof of the applications of improved $H^1$ error estimate}
		\begin{proof}[Proof of Proposition \ref{ncfem:supercv.h1}]
	$\bullet$ \textsc{Conforming FEMs. }Let $\psi\in H^3(\O)\cap H^2_0(\O)$. Since $X_{\disc,0} \subseteq H^2_0(\O)$, by applying integration by parts, the measure of limit-conformity ${\widetilde{W}}_\disc^B$ vanishes. Also, companion operator $E_\disc$ is nothing but the identity operator which implies $\omega(E_\disc)=0$. Hence, under regularity assumption on $\bu$, combine these estimates along with Remark \ref{rates.PDE}, Lemma \ref{lemma.interpolant} and Lemma \ref{lemma.limitconformity} in Theorem \ref{th-h1-super} to obtain $\norm{\nabla_\disc u_\disc-\nabla\bu}{}\le Ch^2$.
 \smallskip
 
 	$\bullet$ \textsc{Non-conforming FEM: the Adini rectangle.}
		 	The estimate $\omega(E_\disc)=\mathcal O(h)$ for a companion operator which maps the Adini rectangle to the Bogner--Fox--Schmit rectangle \cite{ciarlet1978finite} has been done in \cite{Brenner_ncfem}. For $\chi \in H_{\div}^B(\O)^d$ and $v_\disc \in X_{\disc,0}$, 
		 	cellwise integration by parts yields
		 	\be
		 	\begin{aligned}
		 		\int_\O \big( B\chi:\hbd v_\disc+ \mbox{div}(A\chi)\cdot\nabla_\disc v_\disc\big)\d\x=\sum_{\edge \in \edges}^{} 
		 		\int_{\edge}^{}(A\chi n_\edge)\cdot \llbracket \nabla_\disc v_\disc \rrbracket \d s(\x).\nonumber 
		 	\end{aligned}
		 	\ee
		 	From \cite[Theorem 7.2]{HDM_linear} and \eqref{def.WDtilde}, we deduce that ${\widetilde{W}}_\disc^B(\chi)=\mathcal O(h)$. Let $\phi \in H^4(\O)\cap H^2_0(\O)$. Introduce $\mbox{div}(A\chi)\cdot\nabla \phi$ in \eqref{def.WDtilde.nr}, use an integration by parts, the Cauchy--Schwarz inequality and Lemma \ref{lemma.interpolant} to obtain
		 	\begin{align}
		 	\big|\mathcal{\widetilde{W}}_\disc^B(\chi,\P_\disc\phi)\big|
		 	&\le\bigg|\int_\O \big( B\chi:\hbd \P_\disc \phi+\mbox{div}(A\chi)\cdot\nabla \phi\big)\d\x\bigg| \nonumber\\
		 	&\qquad \qquad+ \bigg|\int_{\O}\mbox{div}(A\chi)\cdot(\nabla_\disc \P_\disc \phi-\nabla \phi)\d\x\bigg|\nonumber\\
		 	=\bigg|\int_\O  B\chi:&(\hbd \P_\disc \phi-\hb \phi)\d\x\bigg|	+ \bigg|\int_\O \mbox{div}(A\chi)\cdot(\nabla_\disc \P_\disc \phi-\nabla \phi)\d\x\bigg|\le Ch^2.\nonumber
		 	\end{align}
		  The proof is complete by invoking Remark \ref{rates.PDE}, Lemma \ref{lemma.interpolant}, Lemma \ref{lemma.limitconformity} and Theorem \ref{th-h1-super}.
\smallskip

		 	$\bullet$ \textsc{Non-conforming FEM: the Morley triangle. }For the Morley element, there exists a companion operator such that $\omega(E_\disc)=\mathcal O(h)$, see \cite{Morley_plate} for more details. Let us estimate ${\widetilde{W}}_\disc^B(\chi)$, where $\chi \in H_{\div}^B(\O)^d$. For $v_\disc \in X_{\disc,0}$,
\be
		 	\begin{aligned}
		 		\int_\O \Big( B\chi:\hbd v_\disc+ \mbox{div}(A\chi)\cdot\nabla_\disc v_\disc\Big)=\sum_{\edge \in \edges}^{} 
		 		\int_{\edge}^{}(A\chi n_\edge)\cdot \llbracket \nabla_\disc v_\disc \rrbracket \d s(\x).\label{limit_conformity..}
		 	\end{aligned}
		 	\ee
		 	From \eqref{limit_conformity1} and \eqref{def.WDtilde}, we obtain $ {\widetilde{W}}_\disc^B(\chi)=\mathcal O(h)$. Let $\psi \in H^3(\O)\cap H^2_0(\O)$. In order to evaluate $\mathcal{\widetilde{W}}_\disc^B(\chi,\P_\disc\psi)$, introduce $\mbox{div}(A\chi)\cdot\nabla \psi$ in \eqref{def.WDtilde.nr}, use an integration by parts and the Morley interpolation property given by Lemma \ref{lemma.interpolant}. Hence, 
		 		\begin{align}
		 		\big|\mathcal{\widetilde{W}}_\disc^B(\chi,\P_\disc\psi)\big|
		 		&\le Ch^2+\bigg|\int_\O  B\chi:(\hbd \P_\disc \psi-\hb \psi)\d\x\bigg|.\nonumber
		 		\end{align}
		  Now, reproduce the same steps as in the limit conformity $\mathcal{W}_\disc^B(\xi,\P_\disc\psi)$ proof for Morley triangle (with $\xi=\chi$) and thus from \eqref{Morley.Pd1}--\eqref{Morley.Pd},
		 	$\mathcal{\widetilde{W}}_\disc^B(\chi,\P_\disc\psi)=\mathcal {O}(h^2).$ 
		 	\smallskip
		 	
		 As a consequence, for the Morley triangle, if $\bu \in H^4(\O)\cap H^2_0(\O)$, combine the above estimates, Theorem \ref{th:error.est.PDE}, Remark \ref{rates.PDE}, Lemmas \ref{lemma.interpolant}--\ref{lemma.limitconformity} and Theorem \ref{th-h1-super} to obtain the required result.
		 \end{proof}
		  
	\subsection{Technical results}
		 \begin{lemma}[Poincar\'e inequality along an edge in $L^2$ norm]\cite[Lemma A.1]{HDM_linear}\label{Poincare_edge_lp}
		 	Let $\edge$ be an edge of a polygonal cell, $w \in H^1(\edge)$ and assume that $w$ vanish at a point on the edge $\edge \in \edges$. Then 
		 	$\norm{w}{L^2(\edge)}\le h_{\edge} \norm{\nabla_{\mesh}w}{L^2(\edge)^d}$, where $h_\edge$ is the length of $\edge$.
		 \end{lemma}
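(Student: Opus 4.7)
The plan is to reduce the problem to a one-dimensional Poincaré inequality on an interval by parametrising the edge $\edge$ by arc length. Since $\edge$ is a straight segment of length $h_\edge$ sitting in a hyperplane of $\R^d$, any $w \in H^1(\edge)$ can be identified with a scalar function on an interval of length $h_\edge$, whose weak derivative along the edge coincides (up to the unit tangent) with the tangential component of $\nabla_\mesh w$. In particular, $|w'(s)| \le |\nabla_\mesh w(s)|$ pointwise almost everywhere.

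First, I would fix the point $x_0 \in \edge$ at which $w$ vanishes and write, for any $x \in \edge$,
\[
w(x) = w(x) - w(x_0) = \int_{x_0}^{x} w'(s)\,\d s.
\]
Applying the Cauchy--Schwarz inequality on the integration interval $[x_0,x] \subset \edge$, and bounding its length by $h_\edge$, I obtain
\[
|w(x)|^2 \le |x-x_0|\int_\edge |w'(s)|^2 \d s \le h_\edge \,\norm{\nabla_\mesh w}{L^2(\edge)^d}^2.
\]
Integrating this pointwise bound over $\edge$, whose 1D measure is $h_\edge$, gives
\[
\norm{w}{L^2(\edge)}^2 = \int_\edge |w(x)|^2 \d x \le h_\edge^2 \,\norm{\nabla_\mesh w}{L^2(\edge)^d}^2,
\]
and taking square roots yields the claimed estimate.

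There is essentially no hard step here; the only minor point requiring care is the identification of the (scalar) arc-length derivative $w'$ with the tangential part of $\nabla_\mesh w$, which justifies the pointwise inequality $|w'| \le |\nabla_\mesh w|$ used above. The resulting constant $h_\edge$ is not sharp (the optimal Poincaré constant on an interval of length $h_\edge$ with a single vanishing point is smaller), but the stated form is sufficient for the applications in the paper.
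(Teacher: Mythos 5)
Your argument is correct: parametrising the edge by arc length, writing $w$ as the integral of its tangential derivative from the vanishing point, and applying Cauchy--Schwarz gives exactly the stated bound, with the pointwise inequality $|w'|\le|\nabla_\mesh w|$ handling the passage from the scalar arc-length derivative to the broken gradient. Note that the paper does not prove this lemma itself --- it imports it verbatim as Lemma A.1 of the cited reference \cite{HDM_linear} --- so there is no in-paper proof to compare against; your one-dimensional fundamental-theorem-of-calculus argument is the standard one and is exactly what the quoted result rests on (the hypothesis that $\edge$ is an edge of a \emph{polygonal} cell, hence a segment, is what makes the pointwise evaluation and the 1D reduction legitimate).
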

		 
	\begin{lemma}\label{jump_bb_gradient}
		Let $k\ge0$ be an integer and $w \in \mathbb{P}_k(\mesh)$. If for all $\edge \in \edges$ there exists $x_{\edge} \in \edge$ such that $\llbracket w \rrbracket (x_{\edge})=0$, then there exists $C>0$ such that $\norm{w}{} \le C \norm{\nabla_{\mesh}w}{}.$
	\end{lemma}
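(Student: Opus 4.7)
The plan reduces the inequality via a cellwise Poincar\'e--Wirtinger estimate to a bound on the piecewise-constant cell averages, and establishes the latter by a chaining argument that exploits the pointwise vanishing of the jumps through Lemma~\ref{Poincare_edge_lp}. First, for each cell $\cell\in\mesh$, I introduce $\overline{w}_\cell := \frac{1}{|\cell|}\int_\cell w\,\d\x$. Since $\cell$ is strictly $\x_\cell$-star-shaped with regularity $\eta$, Poincar\'e--Wirtinger gives $\norm{w-\overline{w}_\cell}{L^2(\cell)} \le C h_\cell \norm{\nabla w}{L^2(\cell)^d}$, and summing over $\cell$ reduces the claim to $\sum_{\cell\in\mesh}|\cell|\,\overline{w}_\cell^{\,2} \lesssim \norm{\nabla_\mesh w}{}^2$.

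Next I would translate the pointwise vanishing of the jumps into an $L^2$ bound. Because $w$ is piecewise polynomial, $\llbracket w\rrbracket \in H^1(\edge)$ for every $\edge\in\edges$; since $\llbracket w\rrbracket(x_\edge)=0$, Lemma~\ref{Poincare_edge_lp} yields $\norm{\llbracket w\rrbracket}{L^2(\edge)} \le h_\edge \norm{\nabla_\tau\llbracket w\rrbracket}{L^2(\edge)^d}$, where $\nabla_\tau$ denotes the tangential gradient. The standard polynomial discrete trace inequality then bounds $\norm{\nabla w_{|\cell}}{L^2(\edge)^d}^2 \lesssim h_\cell^{-1}\norm{\nabla w}{L^2(\cell)^d}^2$. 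For two cells $\cell,L$ sharing an interior face $\edge$, decomposing
\[
\overline{w}_\cell - \overline{w}_L = \frac{1}{|\edge|}\int_\edge \llbracket w\rrbracket\,\d s + \frac{1}{|\edge|}\int_\edge (\overline{w}_\cell - w_{|\cell})\,\d s + \frac{1}{|\edge|}\int_\edge (w_{|L}-\overline{w}_L)\,\d s
\]
and using Cauchy--Schwarz with the previous estimates gives $\min(|\cell|,|L|)\,|\overline{w}_\cell - \overline{w}_L|^2 \lesssim \norm{\nabla w}{L^2(\cell\cup L)^d}^2$. An analogous estimate on boundary faces gives $|\cell|\,\overline{w}_\cell^{\,2}\lesssim \norm{\nabla w}{L^2(\cell)^d}^2$ directly, since there $\llbracket w\rrbracket = w_{|\cell}$ vanishes at $x_\edge$.

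Finally, I would chain these bounds: for each $\cell\in\mesh$, take a path of face-adjacent cells joining $\cell$ to a boundary cell, and telescope. The main obstacle is that the path length can grow like $h^{-1}$, so a naive telescoping produces a factor diverging with $h$. To avoid this, one covers $\O$ by a bounded number of Lipschitz subdomains (independent of $h$) and applies a classical broken Poincar\'e inequality on each, ensuring the resulting constant depends only on $\O$ and the mesh regularity parameter $\eta$; this is a standard argument in nonconforming FEM analysis, and for the Morley case one can alternatively invoke Brenner's broken Poincar\'e inequality directly.
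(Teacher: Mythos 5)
Your key ingredient coincides with the paper's: you use Lemma~\ref{Poincare_edge_lp} together with the vanishing of $\llbracket w\rrbracket$ at $x_\edge$ and a discrete trace inequality to control the face jumps by the broken gradient, scaled so that $h_\edge^{-1}\norm{\llbracket w\rrbracket}{L^2(\edge)}^2\lesssim \norm{\nabla_\mesh w}{L^2(\cell)^d}^2$ over the adjacent cells. Where you diverge is in how you convert this jump control into the global $L^2$ bound. The paper simply introduces the $\norm{\cdot}{dG,h}$ norm \eqref{dg1}, shows $\norm{w}{dG,h}\le C\norm{\nabla_\mesh w}{}$ via the jump estimate, and invokes the known discrete Poincar\'e inequality $\norm{w}{}\le C\norm{w}{dG,h}$ (\cite[Theorem 5.3]{DG_DA}); this is short and delegates all the ``global'' work to a cited result. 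You instead reduce to the piecewise-constant cell averages via Poincar\'e--Wirtinger and attempt to chain estimates on $\overline{w}_\cell-\overline{w}_L$ across face-adjacent cells. You correctly diagnose that naive telescoping over $O(h^{-1})$-length paths destroys the constant, but your rescue --- covering $\O$ by finitely many subdomains and ``applying a classical broken Poincar\'e inequality'' (Brenner) --- is precisely the same external result the paper cites in dG form, so the cell-average/chaining machinery ends up doing no work and the decisive global step remains an appeal to the literature rather than an argument. The proof is therefore acceptable as a sketch and reaches the same conclusion, but it is a longer route to the identical citation; if you keep your structure, you should state precisely which broken Poincar\'e inequality you invoke (it must accept the jump functional $\sum_\edge h_\edge^{-1}\norm{\llbracket w\rrbracket}{L^2(\edge)}^2$, or at least mean-value jumps, on general polytopal meshes with regularity $\eta$) and drop the telescoping, which you yourself show cannot close.
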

	\begin{proof}
		Consider the $\norm{\cdot}{dG,h}$ norm defined by: For all $w \in H^1(\mesh)$,
		\be  \label{dg1}
		\norm{w}{dG,h}^2:=\norm{\nabla_\mesh w}{}^2 + \sum_{\edge \in \edges}^{}\frac{1}{h_\edge}\norm{\llbracket w \rrbracket}{L^2(\edge)}^2.
		\ee
	Since $\llbracket w \rrbracket (x_{\edge})=0$ for all $\edge \in \edges$, a use of Lemma \ref{Poincare_edge_lp} and the trace inequality (see \cite[Lemma 1.46]{DG_DA}) yields
		\begin{align}
		\norm{\llbracket w \rrbracket}{L^2(\edge)} & \le h_\edge \norm{\nabla_{\mesh} \llbracket w \rrbracket}{L^2(\edge)^d}\le h_\edge\sum_{\cell \in \mesh_\edge} \norm{\nabla_\mesh w_{\lvert \cell}}{L^2(\edge)^d}\nonumber\\
		& \le Ch_\edge\sum_{\cell \in \mesh_\edge}h_{\cell}^{-1/2} \norm{\nabla_\mesh w}{L^2(\cell)^d} \label{norm_jump1}
		\end{align}
		where $C>0$ depends only on $k$ and $\eta$.  A substitution of \eqref{norm_jump1} in \eqref{dg1} leads to
		\begin{align}
		\norm{w}{dG,h}^2&\le\norm{\nabla_\mesh w}{}^2 + 2\sum_{\edge \in \edges}^{} C_{}h_\edge\sum_{\cell \in \mesh_\edge}h_{\cell}^{-1} \norm{\nabla_\mesh w}{L^2(\cell)^{d}}^2\nonumber\\
		&\le \norm{\nabla_\mesh w}{}^2 + C \sum_{\cell \in \mesh}^{} \sum_{\edge \in \edgescv}^{} \norm{\nabla_\mesh w}{L^2(\cell)^{d}}^2\le C\norm{\nabla_\mesh w}{}^2. \nonumber
		\end{align}
	 Use the fact that $\norm{w}{} \le C \norm{w}{dG,h}$ (\cite[Theorem 5.3]{DG_DA}) to deduce $\norm{w}{} \le C\norm{\nabla_{\mesh}w}{}$. 	
	\end{proof}	
 \textbf{Acknowledgment: }The author would like to sincerely thank Prof J\'er\^ome Droniou and Prof Neela Nataraj for their fruitful comments. 
\bibliographystyle{abbrv}
\bibliography{Fourth_order_elliptic}

\end{document}